\documentclass[11pt]{article}
\usepackage{tikz}
\usetikzlibrary{calc,positioning,shapes.geometric}
\usepackage{pgfplots}
\pgfplotsset{compat=1.18}
\usepackage[margin=1.1in]{geometry}
\usepackage{amsmath, amssymb, amsthm}
\usepackage{url}
\usepackage[numbers,sort&compress]{natbib}
\usepackage[hidelinks]{hyperref}
\newcommand{\doi}[1]{\href{https://doi.org/#1}{\nolinkurl{https://doi.org/#1}}}
\DeclareRobustCommand{\BallIP}{\textnormal{\textsc{BallIP}}}
\DeclareRobustCommand{\BallIPp}[1]{\ensuremath{\text{\BallIP}_{#1}}}
\usepackage{enumitem}
\usepackage{booktabs,tabularx,array}
\usepackage{float}
\usepackage{needspace}

\theoremstyle{plain}
\newtheorem{theorem}{Theorem}[section]
\newtheorem{proposition}[theorem]{Proposition}
\newtheorem{lemma}[theorem]{Lemma}
\newtheorem{corollary}[theorem]{Corollary}

\theoremstyle{definition}
\newtheorem{problem}[theorem]{Problem}

\theoremstyle{remark}
\newtheorem{remark}[theorem]{Remark}

\DeclareMathOperator{\OPT}{OPT}
\newcommand{\Z}{\mathbb{Z}}
\newcommand{\R}{\mathbb{R}}
\newcommand{\Q}{\mathbb{Q}}
\newcommand{\cmax}{c_{\max}}

\newcommand{\xc}{\vec{x}^{c}}
\renewcommand{\vec}[1]{\mathbf{#1}}
\newcommand{\norm}[1]{\lVert #1 \rVert}

\title{Integer Maximization over $\ell_p$ Balls:\\
  Hardness and Exact Algorithms}
\author{Cinar Ari \qquad Robert Hildebrand\\[2pt]
  \small Grado Department of Industrial and Systems Engineering, Virginia Tech}
\date{September 5, 2026}

\begin{document}
\maketitle

\begin{abstract}
We study the problem of maximizing a linear function over the integer points
of an origin-centered $\ell_p$ ball, which we call \BallIPp{p}. For every fixed integer $p\ge2$, we prove that the decision
problem over an $\ell_p$-ball is NP-complete.

We then focus on the Euclidean case and study how the difficulty of the problem depends on the numerical parameters of the instance. We give
two complementary pseudo-polynomial exact algorithms. The first specializes
a known radius-budget dynamic program; the same nonlinear-knapsack framework
also applies to every fixed finite integer $p$. We then develop a complementary
dynamic program over candidate objective values for the Euclidean case. The latter is polynomial in
the encoding size of the radius and pseudo-polynomial in the magnitude of the
cost coefficients.

For a fixed
objective value, feasibility can be formulated as a closest vector problem (CVP) instance. This connection
gives an exact algorithm whose running time depends on the covering radius of
that lattice. Conversely, we
show that a rank-$k$ Euclidean closest-vector instance in ambient dimension $d$ reduces
to the decision version of a Euclidean \BallIP{} instance in dimension $d+k\le2d$,
transferring known bounds under the Exponential Time Hypothesis (ETH). Finally, we study
dimension-dependent approaches based on proximity to the continuous optimizer
and describe the fixed-level
constructions that extend to ellipsoids.
\end{abstract}
\noindent\textbf{Keywords:} integer programming; closest vector problem;
computational complexity; exact algorithms; lattices; $\ell_p$ balls; Euclidean ball.\quad
\textbf{MSC (2020):} 90C10, 90C60, 68Q17, 68Q25, 52C07, 90C39.

\section{Introduction}\label{sec:intro}

\subsection{The problem}\label{sec:intro-problem}

We study the family of problems obtained by maximizing a linear objective over
the integer points of an origin-centered $\ell_p$ ball. For an integer
$p\ge1$, or for $p=\infty$, a cost vector $\vec c\in\Z^n$, and a radius
$r\ge0$, define
\begin{equation}\label{eq:intro-ballip-signed}
    \BallIPp{p}:\qquad
    \max\left\{
        \vec c^\top\vec x:
        \vec x\in\Z^n,\ \norm{\vec x}_p\le r
    \right\}.
\end{equation}
Its decision version also takes a threshold $\gamma\in\Z$ and asks whether a
feasible integer point has objective value at least $\gamma$. The endpoint
cases $p=1$ and $p=\infty$ are trivially solvable in polynomial time, so,
apart from recording them for completeness, we henceforth restrict attention
to finite $p\ge2$. Due to encoding issues arising from irrational numbers
for nonintegral exponents, we consider only integer $p$. For a fixed integer
exponent $p\ge2$, our computational formulation encodes the radius by the
integer $p$-th-power budget $\rho=r^p\in\Z_{\ge0}$, so the constraint is
$\sum_i |x_i|^p\le\rho$. We reserve the unsubscripted notation \BallIP{} for
the Euclidean case $p=2$, where the input radius is encoded by
$r^2\in\Z_{\ge0}$.

The problems $\BallIPp{p}$ are among the simplest convex integer optimization
problems: the objective is linear, the feasible region is a symmetric norm
ball, the underlying lattice is $\Z^n$, and the continuous optimum is explicit. The norm
constraint is separable and resembles a nonlinear knapsack budget. These observations leave open, a priori, whether this restricted
problem admits a polynomial-time algorithm, or whether it is NP-hard.

\subsection{Main results}\label{sec:intro-results}

\paragraph{Hardness.}
For every fixed integer $p\ge2$, the decision version of $\BallIPp{p}$ is
NP-complete. This settles the norm-ball optimization question arising in
Ley and Merkert~\cite{leymerkert} for these exponents.
Having established this norm-wide hardness result, we focus primarily on the
Euclidean problem \BallIP{}.

\paragraph{Pseudo-polynomial algorithms.}
For every fixed finite integer $p\ge2$, $\BallIPp{p}$ can be viewed as a
highly structured separable nonlinear-knapsack problem after the sign
normalization. Choosing $x_i=m\ge0$ earns $c_i m$ units of objective value and
uses $m^p$ units of the $p$-th-power radius budget. The marginal budget costs
$(m+1)^p-m^p$ are the same for every coordinate, so the heterogeneous item
weights present in ordinary Knapsack are absent here.

The exact dynamic program of Hochbaum~\cite{hochbaum1995} for integer
nonlinear knapsack therefore applies to every such fixed $p$. In the Euclidean
case it yields the $O(nr^3)$ radius-budget dynamic program that we state below.
We also develop a second pseudo-polynomial dynamic program (polynomial in the
numerical values of the input rather than in their bit lengths), specific to the
Euclidean case, that works over candidate objective values.
It indexes states by objective value rather than by radius budget and solves
\BallIP{} in $O(n^4\cmax^4)\operatorname{poly}(L)$ bit time, where $\cmax=\norm{\vec c}_\infty$ and
$L=\langle\vec c,r^2\rangle$ is the total binary encoding length of the input. In particular, its dependence on the squared
radius is polynomial in the binary encoding length, while its dependence on
the coefficients of $\vec c$ is pseudo-polynomial. Thus \BallIP{} is
polynomial-time solvable whenever either the squared-radius budget or the cost
coefficients are numerically small.

\paragraph{Closest vector problem and \BallIP.}
The objective-value dynamic program suggests a different way to view
\BallIP. For this discussion, we use the preprocessing from Section~\ref{sec:notation}, so $n\ge2$, $r>0$,
and the costs are positive and primitive, meaning $\gcd(c_1,\ldots,c_n)=1$. Rather than looking for a feasible integer point in the ball that
maximizes the objective, we may ask, for a fixed value $m$, whether there
exists an integer point satisfying
\[
    \vec c^\top\vec x=m
    \qquad\text{and}\qquad
    \norm{\vec x}\le r.
\]
The integer points on the hyperplane $\vec c^\top\vec x=m$ form a coset of
\[
    \Lambda_0
    =
    \{\vec x\in\Z^n:\vec c^\top\vec x=0\}.
\]
Testing whether a fixed objective level contains an integer point in the ball
can therefore be formulated as an instance of the closest vector problem
(CVP), which asks for the lattice point closest to a given target. This fixed-level formulation does not by itself give a polynomial-time
algorithm for \BallIP{} by scanning all objective levels, since their number
can be exponential in the binary input length.

The geometry of these levels nevertheless gives an exact algorithm whose
running time can be controlled by the covering radius. Let $\mu$ denote the
covering radius of $\Lambda_0$, the largest distance from a point of its span
to the lattice (Section~\ref{sec:notation}). If $r\ge\mu$, we obtain an exact algorithm with
running time
\[
    2^{O(n)}
    \left(1+\frac{\norm{\vec c}\mu^2}{r}\right)
    \operatorname{poly}(L).
\]
In particular, this is $2^{O(n)}\operatorname{poly}(L)$ whenever
$\norm{\vec c}\mu^2/r\le2^{O(n)}$ with a uniform constant in the exponent.

We also prove a reduction in the reverse direction. An arbitrary Euclidean
CVP instance in ambient dimension $d$ and lattice rank $k$ (the number of
basis vectors) reduces to a
\BallIP{} instance in dimension $ d+k\le2d.$ Combining this reduction with known lower bounds for Euclidean CVP based on
the Exponential Time Hypothesis (ETH), which asserts that 3-SAT cannot be
solved in $2^{o(n)}$ time, implies that \BallIP{} does not admit a$
    2^{o(n)}
    \operatorname{poly}(\langle\vec c,r^2\rangle)$
exact algorithm unless ETH fails~\cite[Theorem~1.1]{AGM25}. Thus
dimension-controlled improvements for \BallIP{} would also transfer to
Euclidean CVP.

\paragraph{Other results.}
Since the continuous optimizer is explicit, one natural approach is to
enumerate integer points near it. We derive coordinate bounds for such a
search and show that, although they improve on naive enumeration, explicit
enumeration of the resulting boundary cap, the part of the ball above the
objective threshold within which every optimal point lies, can still have radius-dependent
size. We also identify which fixed-objective-level constructions extend from
the Euclidean ball to integer optimization over a general ellipsoid.

Table~\ref{tab:results} summarizes the main complexity and algorithmic results.

\begin{table}[H]
\centering
\footnotesize
\renewcommand{\arraystretch}{1.06}
\begin{tabularx}{\textwidth}{@{}>{\raggedright\arraybackslash}p{0.31\textwidth}>{\raggedright\arraybackslash}X@{}}
\toprule
\textbf{Result} & \textbf{Statement} \\
\midrule
\multicolumn{2}{@{}l}{\textbf{Hardness}}\\[-1pt]
\addlinespace[1pt]
Fixed integer $p\ge2$
& The decision version of $\BallIPp{p}$ is NP-complete (Theorem~\ref{thm:lp}). \\
\midrule
\multicolumn{2}{@{}l}{\textbf{Exact algorithms for \BallIP}}\\[-1pt]
\addlinespace[1pt]
Knapsack DP
& Exact in $O(nr^3)$ arithmetic operations on polynomial-bit integers, hence $O(nr^3)\operatorname{poly}(L)$ bit time (Proposition~\ref{prop:dp}). \\
\addlinespace[1pt]
Candidate objective values
& Exact in $O(n^4\cmax^4)\operatorname{poly}(L)$ bit time; polynomial in the encoding length of $r^2$ and pseudo-polynomial in the objective coefficients (Theorem~\ref{thm:window}). \\
\midrule
\multicolumn{2}{@{}l}{\textbf{CVP and other results}}\\[-1pt]
\addlinespace[1pt]
Covering-radius algorithm
& If $r\ge\mu$, exact in $2^{O(n)}(1+\norm{\vec c}\mu^2/r)\operatorname{poly}(L)$ time; in particular single-exponential when $\norm{\vec c}\mu^2/r\le2^{O(n)}$ uniformly (Theorem~\ref{thm:covscan}). \\
\addlinespace[1pt]
CVP reduction
& CVP in dimension $d$ and rank $k$ reduces to \BallIP{} in dimension $d+k\le2d$; this yields the ETH lower bound (Theorem~\ref{thm:main}, Remark~\ref{rem:cvp-ballip-hardness}). \\
\addlinespace[1pt]
Boundary enumeration
& Gives an exact radius-dependent enumeration algorithm (Theorem~\ref{thm:enumeration}). \\
\bottomrule
\end{tabularx}
\caption{Summary of the main results.}
\label{tab:results}
\end{table}

\subsection{Motivation and related work}\label{sec:intro-related}

\paragraph{Integral inverse optimization.}
Our main motivation for \BallIP{} and its $\ell_p$ generalization comes from
integral inverse optimization. Inverse optimization modifies objective
coefficients so that a given feasible solution becomes optimal; see Ahuja and
Orlin~\cite{ahujaorlin} and Chan, Mahmood, and Zhu~\cite{chan}.
Ley and Merkert~\cite[Section~3]{leymerkert} require these modifications to be
integral. Their analysis leads to optimization over integer points of
origin-centered norm balls, which we study for fixed integer $p\ge2$.
Their coNP-membership result for the inverse problem under $\ell_1$ and
$\ell_\infty$ uses polynomial-time optimization over the corresponding
balls~\cite[Theorem~3.6]{leymerkert}. Our hardness result rules out the same
optimization approach for fixed integer $p\ge2$, unless $\mathrm{P}=\mathrm{NP}$;
it does not resolve coNP membership for the inverse problem itself.

\paragraph{Allocation problems and nonlinear knapsack.}
\BallIP-type constraints also arise in structured allocation models.
Round-lot mean--variance portfolio selection has integer holdings and a
quadratic risk constraint~\cite{lisunwang,corazzafavaretto,bonamilejeune};
equal diagonal covariance coefficients give a Euclidean ball. Likewise,
integer speed levels in an idealized homogeneous power-allocation model with
power proportional to $s^\alpha$ give
$\sum_i x_i^\alpha\le B$~\cite{isci2006,yds95,bkp07}.
These are symmetric special cases, not the full applied formulations.

More generally, \BallIP{} is a separable nonlinear-knapsack problem within the
resource-allocation literature~\cite{fox1966,hochbaum1990,bretthauer2002}.
Hochbaum's exact dynamic program and fully polynomial-time approximation scheme
(FPTAS) apply here~\cite[Sections~1 and~3]{hochbaum1995}; we specialize the
dynamic program in Section~\ref{sec:exact}. Ordinary Knapsack hardness does
not settle the complexity of this restriction: the marginal radius costs
$1,3,5,\ldots$ are identical across coordinates.

\paragraph{Convex integer programming and trust-region problems.}
\BallIP{} is a special case of convex integer optimization.  The classical fixed-dimensional theory begins with Lenstra~\cite{lenstra}, who showed that integer linear programming is polynomial-time solvable in fixed dimension, and Kannan~\cite{kannan}, who developed further geometric methods for integer programming.  For convex constraints, Hildebrand and K\"oppe~\cite{hk2010} give a Lenstra-type deterministic $2^{O(n\log n)}$ algorithm for quasiconvex polynomial integer optimization.  More recently, Reis and Rothvoss~\cite{reisrothvoss} gave a randomized $(\log n)^{O(n)}$-time algorithm, up to polynomial factors, for convex integer programming.

\BallIP{} can also be interpreted as a restricted integer trust-region problem; see Mor\'e and
Sorensen~\cite{moresorensen1983} for the continuous problem and
Del Pia~\cite{delpia2024} for the mixed-integer setting over general lattices. In particular, Del Pia~\cite{delpia2024} show that the integer trust region problem over general lattices is strongly NP-hard. We show that a even with
linear objective, an origin-centered Euclidean ball, and the standard integer
lattice up to scaling, the problem remains weakly NP-hard, that is, NP-hard
but solvable in pseudo-polynomial time.

\paragraph{Closest vector problem.}
The closest vector problem is one of the central computational problems in
lattice theory. The hardness of CVP goes back to van Emde Boas~\cite{vanemdeboas}; see also
Micciancio~\cite{micciancio2001}. Micciancio and
Voulgaris~\cite{miccianciovoulgaris} give a deterministic $2^{O(n)}$-time
algorithm for exact Euclidean CVP using Voronoi cells. A single-exponential-time,
polynomial-space algorithm remains an open question~\cite{hunkenschroder},
and fine-grained hardness is studied in~\cite{BGS17,AK22}.

\paragraph{Related reduction techniques.}
The binary-forcing step in Theorem~\ref{thm:ballip-reduction} is related to
Micciancio's reduction from \textsc{SubsetSum} to CVP~\cite{micciancio2001}.
Theorem~\ref{thm:main} combines bounded integer equations using powers of a
large base, treating the equations as digits of one integer. Such positional encodings appear in Karp~\cite{karp1972}; see
also the Diophantine equation aggregation literature~\cite{bradley1971,gloverwoolsey1972,kannan1983aggregation}.

\paragraph{Organization.}
Section~\ref{sec:notation} fixes notation, the normalizations used by the
algorithms, and the lattice terminology. Section~\ref{sec:knapsack-group}
proves NP-completeness for every fixed integer $p\ge2$.
Section~\ref{sec:exact} gives the two pseudo-polynomial exact algorithms for
the Euclidean case. Section~\ref{sec:cvp-geometry} develops the objective-level
geometry: the covering-radius algorithm, the discriminant obstruction, and the
reduction from CVP to \BallIP. Section~\ref{sec:boundary} treats proximity
bounds and boundary enumeration, Section~\ref{sec:ellipsoid-discussion}
discusses which constructions extend to ellipsoids, and
Section~\ref{sec:open} concludes with open questions.

\section{Preliminaries}\label{sec:notation}

\paragraph{Notation.}
Vectors are boldface, $[n]:=\{1,\ldots,n\}$, and $\boldsymbol{\chi}_S$ is the
indicator vector of $S\subseteq[n]$. We write $\vec e_i$ for the $i$th
standard unit vector and $I_k$ for the $k\times k$ identity matrix, $\norm{\cdot}_p$ for the
$\ell_p$ norm, $\norm{\cdot}$ for the Euclidean norm, and $\mathbb B^n$ for
the closed Euclidean unit ball. Set $\cmax:=\norm{\vec c}_\infty$ and
$\lfloor z\rceil:=\lfloor z+\tfrac12\rfloor$. The input length is
$L=\langle\vec c,r^2\rangle$, or $\langle\vec c,r^2,\gamma\rangle$ for a
decision instance; $\operatorname{poly}(L)$ denotes an unspecified polynomial
factor. The radius $r=\sqrt{r^2}$ need not be integral.

Write $\OPT(\vec c,r^2)$ for the value of \BallIP{} in
\eqref{eq:intro-ballip-signed} with $p=2$. A superscript $+$ on the problem
or its value restricts the variables to $\Z_{\ge0}^n$. By coordinatewise sign
symmetry,
\begin{equation}\label{eq:ballip}
    \OPT(\vec c,r^2)=\OPT(|\vec c|,r^2)=\OPT^+(|\vec c|,r^2).
\end{equation}
where $|\vec c|$ is obtained by taking absolute values coordinatewise. The same convention applies to $\BallIPp{p}$, with budget
$\rho=r^p\in\Z_{\ge0}$. We omit the subscript only when $p=2$.
Fixed-level subproblems retain unrestricted integer variables unless stated
otherwise.

\paragraph{Normalizations.}
For algorithms, we remove zero-cost coordinates, calling the remaining
coordinates \emph{active}, and normalize signs. If
$g=\gcd(c_1,\ldots,c_n)>0$, replacing $\vec c$ by $\vec c/g$ makes it
primitive; a decision threshold becomes $\lceil\gamma/g\rceil$.
An optimizer is recovered by restoring signs and zero coordinates, and its
value is multiplied by $g$. The cases $\vec c=\vec0$ and $r^2=0$ have
value zero; with one active coordinate, choose magnitude $\lfloor r\rfloor$.
Thus the algorithmic arguments may assume $n\ge2$, $r>0$, and positive
primitive costs. Reductions retain their explicitly constructed coefficients
and domains, with normalization at the end if needed.

\paragraph{Lattices and the closest vector problem.}
A \emph{lattice} $\Lambda=B\Z^k\subseteq\R^d$ is the set of integer
combinations of the columns of a matrix $B\in\R^{d\times k}$ with linearly
independent columns; $B$ is a \emph{basis}, $k$ is the \emph{rank}, $d$ is
the ambient dimension, and $\Lambda$ is \emph{full-rank} when $k=d$. For
$\vec t\in\R^d$, $\operatorname{dist}(\vec t,\Lambda)=\min_{\vec v\in\Lambda}\norm{\vec t-\vec v}$,
and the \emph{covering radius} of $\Lambda$ is
$\sup\{\operatorname{dist}(\vec t,\Lambda):\vec t\in\operatorname{span}\Lambda\}$.
For an integer $m$, the \emph{objective level} $m$ is the hyperplane
$\{\vec x\in\R^n:\vec c^\top\vec x=m\}$, its \emph{slice} is the intersection of
that hyperplane with $r\mathbb B^n$, and the level is \emph{feasible} if its
slice contains an integer point. A \emph{cap} of the ball is its intersection
with a closed halfspace $\{\vec x:\vec c^\top\vec x\ge\gamma\}$.

The \emph{closest vector problem} (CVP) comes in two forms that we use.
In the \emph{decision} form, the input is a basis $B\in\Z^{d\times k}$ of
rank $k$, a target $\vec t\in\Z^d$, and a bound $\beta\in\Z_{\ge0}$, and
the question is whether some $\vec z\in\Z^k$ satisfies
$\norm{B\vec z-\vec t}^2\le\beta$; this is the form that
Subsection~\ref{sec:cvp-to-ballip} reduces to \BallIP. In the \emph{search}
form, the output is a lattice vector $B\vec z$ minimizing
$\norm{B\vec z-\vec t}$; this is the form used as an oracle in
Subsection~\ref{sec:cvpalg}, where the lattice is fixed and only the target
varies from query to query, so a Voronoi cell of the lattice can be computed
once and reused~\cite{miccianciovoulgaris}. A rational target is handled by
scaling the lattice and the target by a common denominator, which does not
change which lattice vector is closest. The promise variant gap-CVP appears
only in Remark~\ref{rem:cvp-ballip-hardness}, where it is defined.

\section{Hardness}\label{sec:knapsack-group}

We first establish NP-completeness for the Euclidean problem and then extend
the same binary-forcing mechanism to every fixed integer exponent $p\ge2$.

\subsection{Euclidean NP-completeness and an integer trust-region corollary}\label{sec:reduction}

The source problem is a cardinality-restricted form of subset sum.

\begin{problem}[Exact $k$-item Subset Sum Problem, E-kSSP]\label{prob:ekssp}
Given $a_1,\dots,a_n\in\Z_{\ge0}$, a target $\beta\in\Z_{\ge0}$, and a
cardinality $k\in\{0,1,\dots,n\}$, determine whether there exists $S\subseteq[n]$
with $|S|=k$ and $a(S)=\beta$, where $a(S)=\sum_{i\in S}a_i$.
\end{problem}

E-kSSP is the cardinality-constrained subset-sum problem. For completeness, we give a short, self-contained reduction from \textsc{SubsetSum}, whose NP-completeness is standard~\cite{kpp2004}.

\begin{lemma}\label{lem:ekssp}
E-kSSP is NP-complete.
\end{lemma}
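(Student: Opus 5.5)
We show membership in NP and then give a reduction from \textsc{SubsetSum}. Membership is immediate: a certificate is a set $S\subseteq[n]$, and checking $|S|=k$ and $a(S)=\beta$ takes polynomial time in the binary encoding.

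For hardness, take a \textsc{SubsetSum} instance $a_1,\dots,a_n\in\Z_{\ge0}$ with target $t$, asking whether some $S\subseteq[n]$ has $a(S)=t$. The only difficulty is that the cardinality of a solution is unknown. Two fixes are available.

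\emph{Polynomially many calls (Turing reduction).} The instance is a yes-instance if and only if some E-kSSP instance $(a,t,k)$ with $k\in\{0,\dots,n\}$ is a yes-instance. This establishes NP-hardness under Turing reductions, but not the Karp reduction we want.

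\emph{Single instance via padding (Karp reduction).} This is the construction we use. Add $n$ dummy items of value $0$, giving $2n$ items, and ask for exactly $k=n$ items with sum $t$.
\begin{itemize}[nosep]
  \item If $S$ solves the \textsc{SubsetSum} instance, complete it with $n-|S|$ dummy items. These exist because $|S|\le n$, and they do not change the sum.
  \item Conversely, given a solution of the E-kSSP instance, drop the dummy items. The remaining original items still sum to $t$.
\end{itemize}
The reduction is clearly polynomial, and the target $\beta=t\in\Z_{\ge0}$ as required. This also covers $t=0$, which is trivially a yes-instance of both problems: take $S=\emptyset$, padded with $n$ dummy items.

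One caveat is that Problem~\ref{prob:ekssp} allows $a_i=0$, so the zero-valued dummies are legal. If a later reduction needs positive weights, we can instead:
\begin{itemize}[nosep]
  \item shift every weight by a large constant $M>\sum_i a_i$;
  \item use dummy items of weight $M$;
  \item set the target to $t+nM$ with $k=n$.
\end{itemize}
With $k$ fixed, exactly $nM$ of the target comes from the shifts, so the remaining sum $t$ must be matched by the original $a_i$ chosen. No real obstacle arises beyond checking this padding equivalence.
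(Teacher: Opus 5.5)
Your padding reduction is correct and is essentially the paper's own proof: the paper also adjoins as many zero-valued items as there are original items, sets the cardinality equal to the original item count and the target $\beta=t$, and argues both directions exactly as you do. The Turing-reduction aside and the positive-weight shift are not needed for the lemma, but they are harmless.
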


\begin{proof}
E-kSSP is in NP, with $S$ as a certificate. For hardness we reduce from
\textsc{SubsetSum}: given positive integers $s_1,\dots,s_m$ and a target $t$,
decide whether some $S'\subseteq[m]$ has $\sum_{i\in S'}s_i=t$. Form the E-kSSP
instance with $n=2m$ items, where $a_i=s_i$ for $1\le i\le m$, $a_i=0$
for $m<i\le 2m$, $\beta=t$, and $k=m$.
If $S'\subseteq[m]$ satisfies $\sum_{i\in S'}s_i=t$, then since $|S'|\le m$ and
the items $m+1,\dots,2m$ have value zero, adjoining $m-|S'|$ of them to $S'$ yields a
set $S$ with $|S|=m=k$ and $a(S)=t=\beta$. Conversely, if $|S|=k$ and
$a(S)=\beta$, then $S':=S\cap[m]$ satisfies $\sum_{i\in S'}s_i=a(S')=a(S)=t$,
since the indices in $S\setminus[m]$ carry zero items. The construction is
polynomial, so the instances are equivalent.
\end{proof}

We now establish the main hardness result for the Euclidean problem.

\begin{theorem}\label{thm:ballip-reduction}
The decision versions of \BallIP$^+$ and \BallIP{} are NP-complete.
\end{theorem}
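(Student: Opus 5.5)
We prove membership in NP directly and hardness by a polynomial reduction from E-kSSP (Lemma~\ref{lem:ekssp}). \emph{Membership:} a witness $\vec x$ with $\vec c^\top\vec x\ge\gamma$ and $\norm{\vec x}^2\le r^2$ has $|x_i|\le r$. Its encoding length is therefore polynomial in $\langle r^2\rangle$, and both conditions are checked with integer arithmetic. \emph{Transfer between variants:} by \eqref{eq:ballip}, $\OPT(\vec c,r^2)=\OPT^+(|\vec c|,r^2)$. So once we build an instance with nonnegative costs, hardness of \BallIP$^+$ gives hardness of \BallIP{} with no further work, and it suffices to reduce E-kSSP to \BallIP$^+$.

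\emph{Binary forcing.} Over $\Z_{\ge0}^n$ we have $\sum_i x_i\le\sum_i x_i^2$, with equality iff $\vec x\in\{0,1\}^n$. With budget $r^2=k$ and a dominant common cost $M$ in every active coordinate, the score $M\sum_i x_i$ can reach $Mk$ only if $\sum_i x_i=k$. That forces $\sum x_i^2=\sum x_i$, hence $\vec x=\boldsymbol{\chi}_S$ with $|S|=k$. I therefore plan costs of the form $c_i=M+(\text{small term depending on }a_i)$ with $M$ much larger than the total of the small terms, threshold $\gamma=Mk+(\text{small part})$ and $r^2=k$ plus possibly slack for auxiliary coordinates. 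Any point meeting $\gamma$ must then lose nothing in the $M$-part, so it is an indicator vector of a $k$-set.

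\emph{Enforcing $a(S)=\beta$ exactly.} A linear objective with a one-sided threshold only certifies $a(S)\ge\beta$. I will obtain the upper bound from the radius, following Micciancio's subset-sum-to-CVP idea~\cite{micciancio2001}. The ball $\norm{\vec x}^2\le R$ at fixed $\norm{\vec x}^2$ is equivalent to minimizing $\norm{\vec x-\lambda\vec c}^2$, so the problem behaves like a CVP in $\Z^{n'}$. I will append auxiliary coordinates $\vec y$ with costs chosen so that the objective value rewards $a^\top\vec x$ linearly while the squared norm charges the same quantity through $\vec y$. Concretely, the optimal $\vec y$ for a given $S$ encodes $a(S)$, say by a positional/base-$B$ digit expansion as in the aggregation literature~\cite{karp1972}. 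The squared-norm cost of $\vec y$ is then a strictly convex function of $a(S)$, while its objective contribution is linear. Choosing the slack in $r^2$ and the secondary part of $\gamma$ so that the tangent line touches this convex function only at $a(S)=\beta$ makes the threshold reachable iff some $k$-set has $a(S)=\beta$. Equivalently, the single-variable inequality $2\beta s-s^2\ge\beta^2$ holds only at $s=\beta$, and I need to realize $s^2$ with unit-weight integer coordinates of polynomial bit length.

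\emph{Main obstacle.} The hardest step is implementing this quadratic penalty in $a(S)$ using only the identity-weighted norm and integer coordinates with polynomially many bits. Unary replication of items is only pseudo-polynomial and therefore not allowed. If the digit encoding becomes unwieldy, my first fallback is to keep the concave tradeoff one-dimensional: add a single auxiliary coordinate $y$ with cost proportional to $\beta$ and absorb its norm $y^2$ into the radius slack. I would then check whether maximizing over $y$ alone already yields the tangent-line certificate above. Finally I will verify both directions of the reduction, polynomial size of $M$, $r^2$ and $\gamma$, and that after normalization the costs are positive integers.
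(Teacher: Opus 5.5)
Your NP-membership argument and the transfer from \BallIP$^+$ to \BallIP{} via sign symmetry are fine. The hardness part, however, has a genuine gap: the upper bound $a(S)\le\beta$ is never constructed, and it cannot be obtained in the framework you set up.

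The problem is separable, so coordinates interact only through the shared budget. With the binary choice made at the origin, a selected item consumes exactly one unit of $\norm{\vec x}^2$, whatever $a_i$ is. Suppose your $M$-part forces $\vec x=\boldsymbol{\chi}_S$ with $|S|=k$, and write $c(S)=\sum_{i\in S}c_i$. The remaining question for the auxiliary coordinates is then whether some $\vec y$ with $\norm{\vec y}^2\le r^2-k$ has objective contribution at least $\gamma-c(S)$. This depends on $S$ only through $c(S)$, and it is monotone in $c(S)$. So the accepted sets are exactly $\{S:|S|=k,\ c(S)\ge\theta\}$ for a fixed $\theta$, which is decided by summing the $k$ largest costs; such a construction cannot encode $a(S)=\beta$. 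In particular, no digit expansion of $a(S)$ can show up in ``the optimal $\vec y$ for a given $S$'', because that optimum is the same for every $S$. Your single-coordinate fallback only shifts $\theta$.

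The missing idea is to make the budget cost of selecting item $i$ depend on $a_i$, by placing the binary choice at a shifted base point rather than at the origin. This is what the paper does. Take $M=n+1$, $u_i=Ma_i$, $c_i=2u_i+1$, $B=2M\beta+k$, $r^2=\norm{\vec u}^2+B$ and $\gamma=\vec c^\top\vec u+B$, and write $\vec x=\vec u+\vec d$. Then $\norm{\vec x}^2-\norm{\vec u}^2=\sum_i c_id_i+\sum_i d_i(d_i-1)$, so feasibility gives the squeeze $B\le\sum_ic_id_i\le\sum_ic_id_i+\sum_id_i(d_i-1)\le B$. This forces $\vec d=\boldsymbol{\chi}_S$ and $2Ma(S)+|S|=2M\beta+k$. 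Since $\bigl||S|-k\bigr|\le n<2M$, both $|S|=k$ and $a(S)=\beta$ follow. Your tangent-line intuition is the right one, but it has to be applied per coordinate to $t\mapsto t^2$ at $u_i$, where the cost $c_i$ is the increment $(u_i+1)^2-u_i^2$, not to a global quadratic in $a(S)$. The ball and the threshold then measure the same linear quantity $\sum_ic_id_i$ from opposite sides. That yields the two-sided equality with no auxiliary coordinates.
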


\begin{proof}
By the sign-symmetry normalization in Section~\ref{sec:notation}, it suffices
for NP-hardness to work with positive costs and the nonnegative formulation;
thus, without loss of generality for this proof, we assume
$\vec x\in\Z_{\ge0}^n$ and prove hardness for \BallIP$^+$.
Both formulations are in NP. Indeed, if $\vec x$ is feasible, then
$x_i^2\le r^2$ for every $i$, so each coordinate of $\vec x$ has encoding
length polynomial in the encoding length of $r^2$; the ball constraint and the
objective threshold can then be checked in polynomial time.

For NP-hardness, we reduce from E-kSSP, which is NP-complete by Lemma~\ref{lem:ekssp}. Given an instance
$(a_1,\dots,a_n,\beta,k)$, set
\[
  M=n+1,\qquad
  u_i=Ma_i,\qquad
  c_i=2u_i+1=2Ma_i+1,
\]
and let
\[
  B=2M\beta+k,\qquad
  r^2=\norm{\vec u}^2+B,\qquad
  \gamma=\vec c^\top\vec u+B.
\]
All constructed data have encoding length polynomial in the E-kSSP input.

Write $\vec x=\vec u+\vec d$. Because $\vec u\in\Z^n_{\ge0}$, the condition
$\vec x\in\Z^n_{\ge0}$ is equivalent to $\vec d\in\Z^n$ with
$d_i\ge-u_i$ for every $i$. Since $2u_i=c_i-1$, the ball constraint is equivalent to
$\sum_i(c_i-1)d_i+\sum_i d_i^2\le B$. Define
$F(\vec d)=\sum_i(c_i-1)d_i+\sum_i d_i^2$. Likewise, the objective threshold
is equivalent to $\sum_i c_i d_i\ge B$. Define $L(\vec d)=\sum_i c_i d_i$. Hence the constructed \BallIP$^+$ instance is a yes-instance exactly when there is
some $\vec d\in\Z^n$ with $d_i\ge-u_i$ satisfying
\[
  F(\vec d)\le B\le L(\vec d).
\]

For every $\vec d\in\Z^n$,
\[
  F(\vec d)-L(\vec d)
  =\sum_i\bigl(d_i^2-d_i\bigr)
  =\sum_i d_i(d_i-1)\ge0.
\]
Each $d_i(d_i-1)$ is the product of two consecutive integers, so it is
nonnegative and vanishes exactly when $d_i\in\{0,1\}$. Therefore any feasible
$\vec d$ satisfies $B\le L(\vec d)\le F(\vec d)\le B$. It follows that
$F(\vec d)=L(\vec d)=B$ and $\vec d\in\{0,1\}^n$. Thus $\vec d=\boldsymbol{\chi}_S$ for some $S\subseteq[n]$.
Figure~\ref{fig:binary-forcing} illustrates geometrically the binary-forcing step above, in which feasibility restricts each displacement $d_i$ to $0$ or $1$.

\begin{figure}[t]
\centering
\begin{tikzpicture}[>=stealth, line cap=round, line join=round]
  \tikzset{
    inpt/.style  = {circle, fill, inner sep=1.3pt},
    outpt/.style = {circle, draw, fill=white, inner sep=1.1pt},
    ptlabel/.style = {font=\small}
  }
  \pgfmathsetmacro{\rad}{sqrt(52)}
  \begin{scope}[scale=0.66]
    \begin{scope}
      \clip (-1.2,-1.2) rectangle (9.3,9.0);
      \begin{scope}
        \clip (0,0) circle (\rad);
        \fill[gray!45] (5.5,4.929) -- (7.7,0.843) -- (9,9) -- (5.5,9) -- cycle;
      \end{scope}
      \draw[->,thin] (-1,0) -- (9.1,0) node[right] {$x_1$};
      \draw[->,thin] (0,-1) -- (0,8.8) node[left] {$x_2$};
      \draw[semithick] (\rad,0) arc (0:90:\rad);
      \draw[thin] (0,0) -- (\rad,0); \draw[thin] (0,0) -- (0,\rad);
      \foreach \i in {0,...,8}{\foreach \j in {0,...,8}{
        \pgfmathparse{(\i*\i+\j*\j<=52)?1:0}
        \ifnum\pgfmathresult=1 \node[inpt] at (\i,\j) {}; \else \node[outpt] at (\i,\j) {}; \fi
      }}
      \draw[thick] (3.9,7.9) -- (8.15,0.0);
      \draw[->] (0,0) -- (6,3) node[pos=0.55, below right=-2pt, ptlabel] {$\vec u$};
      \draw[thick] (6,4) circle (4.5pt);
      \draw[gray, thin] (5.8,2.7) rectangle (7.1,4.1);
    \end{scope}
    \node[ptlabel, anchor=north west] at (8.0,-0.05) {$13x_1+7x_2=106$};
    \draw[thin] (6,4) -- (7.6,6.3);
    \node[ptlabel, anchor=south west] at (7.45,6.25) {$\vec u+(0,1)$};
  \end{scope}
  \begin{scope}[xshift=7.6cm, yshift=-0.2cm, scale=4.4, shift={(-5.8,-2.7)}]
    \begin{scope}
      \clip (5.8,2.7) rectangle (7.1,4.1);
      \begin{scope}
        \clip (0,0) circle (\rad);
        \fill[gray!45] (5.0,5.857) -- (8.0,0.286) -- (9,9) -- (5,9) -- cycle;
      \end{scope}
      \draw[semithick] (0,0) circle (\rad);
      \draw[thick] (5.0,5.857) -- (8.0,0.286);
      \draw[dotted] (5.8,3) -- (7.1,3); \draw[dotted] (5.8,4) -- (7.1,4);
      \draw[dotted] (6,2.7) -- (6,4.1); \draw[dotted] (7,2.7) -- (7,4.1);
      \node[outpt] at (6,3) {}; \node[outpt] at (7,3) {};
      \node[inpt]  at (6,4) {}; \node[outpt] at (7,4) {};
    \end{scope}
    \draw[gray, thin] (5.8,2.7) rectangle (7.1,4.1);
    \node[ptlabel, below right=1pt] at (6,3) {$\vec u=(6,3)$};
    \node[ptlabel, below left=1pt] at (7,3) {$\vec u+(1,0)$};
    \node[ptlabel, anchor=north west] at (6.07,3.94) {$\vec u+(0,1)$};
    \node[ptlabel, below left=1pt] at (7,4) {$\vec u+(1,1)$};
    \node[ptlabel, anchor=east] at (6.19,3.55) {$\norm{\vec x}=r$};
    \node[ptlabel, anchor=west] at (6.55,3.15) {$\vec c^\top\vec x=\gamma$};
    \node[ptlabel, anchor=west] at (6.33,3.40) {cap};
    \draw[thin] (6.32,3.40) -- (6.36,3.40);
  \end{scope}
  \draw[gray, thin, ->] (4.72,2.68) -- (7.55,2.55);
\end{tikzpicture}
\caption{Binary forcing for $\vec u=(6,3)$, $\vec c=(13,7)$, $r^2=52$, and $\gamma=106$. Left is the constructed instance. The ball and the objective threshold leave only the shaded cap as the feasible region. Right panel magnifies around the lattice points of $u$. Of these, only $\vec u+(0,1)$ lies in the cap. For any $\vec d = \vec x-\vec u$, $\vec d \in \{0,1\}$ is a necessary but not sufficient condition.}
\label{fig:binary-forcing}
\end{figure}

Using the definition of $\vec c$, we have
$B=L(\vec d)=\sum_{i\in S}c_i=2M a(S)+|S|$. Since
$B=2M\beta+k$, we obtain $2M\bigl(a(S)-\beta\bigr)=k-|S|$.

The right-hand side has absolute value at most $n$, whereas the left-hand side
is an integer multiple of $2M$. Because $M=n+1$, we have $n<2M$, so both sides
must be zero. Hence $a(S)=\beta$ and $|S|=k$. Thus every feasible point of the constructed \BallIP$^+$ instance yields an E-kSSP
witness. 

Conversely, suppose $S\subseteq[n]$ is an E-kSSP witness, so that
$a(S)=\beta$ and $|S|=k$. Take $\vec d=\boldsymbol{\chi}_S$ and
$\vec x=\vec u+\vec d$. Then $\vec x\in\Z^n_{\ge0}$ and
$L(\vec d)=2Ma(S)+|S|=2M\beta+k=B$. Since $\vec d$ is binary, $d_i^2=d_i$ for every $i$, and therefore
$F(\vec d)=L(\vec d)=B$. It follows that $\norm{\vec x}^2\le r^2$ and
$\vec c^\top\vec x\ge\gamma$. Hence the \BallIP$^+$ instance is feasible.

The E-kSSP instance is therefore a yes-instance if and only if the constructed
\BallIP$^+$ instance is a yes-instance. Since the construction is polynomial,
\BallIP$^+$ is NP-hard. The normalization in
Section~\ref{sec:notation} transfers the same positive-cost instances to
\BallIP, so both decision problems are NP-complete.
\end{proof}

We next show that the hardness remains even when $r$ itself is an integer, or equivalently, when $r^2$ is a perfect square. We use this to show that the integer trust-region problem is NP-hard even when the lattice is restricted to a scalar multiple of $\mathbb Z^n$.

\begin{lemma}[Integral-radius version of the Euclidean reduction]\label{lem:ballip-integer-radius}
The reduction in Theorem~\ref{thm:ballip-reduction} may be chosen so that the
radius $r$ is a positive integer. Let $m$ denote the dimension of the
constructed instance. The same constructed instances are equivalent
to the E-kSSP instance whether the variables range over $\Z^m_{\ge0}$ or over
all of $\Z^m$.
\end{lemma}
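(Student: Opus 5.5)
The plan is to keep the binary-forcing construction of Theorem~\ref{thm:ballip-reduction} unchanged on the original $n$ coordinates and to append a few \emph{padding} coordinates. These are built in the same style, with centers $u_j$ and costs $c_j=2u_j+1$. Their only purpose is to turn $r^2$ into a perfect square while forcing their displacements to be zero. Concretely, let $N=\norm{\vec u}^2+B$ be the squared radius of the original construction. I will add two new coordinates $n+1,n+2$ with centers $V$ and $U$ and costs $2V+1$ and $2U+1$. The new data are
\[
r^2=N+V^2+U^2,\qquad \gamma=\vec c^\top\vec u+(2V+1)V+(2U+1)U+B,
\]
with the same $B$. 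In the shifted variables $\vec x=\vec u+\vec d$ over all $m=n+2$ coordinates, the same algebra as before shows that feasibility is again equivalent to $F(\vec d)\le B\le L(\vec d)$.

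\textbf{Choosing the padding.} I choose $V$ as an integer with $V>B/2$ and with parity such that $N+V^2$ is odd. Then I set $U=(N+V^2-1)/2$, so that
\[
N+V^2+U^2=\Bigl(\tfrac{N+V^2+1}{2}\Bigr)^2 .
\]
This makes $r=(N+V^2+1)/2$ a positive integer. I must also check $U>B/2$, which is immediate because $N\ge B$. All new numbers have polynomial bit length.

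\textbf{Padding is forced to zero.} The binary-forcing step goes through verbatim: $F-L=\sum_i d_i(d_i-1)\ge 0$ for every integer $\vec d$. Hence any feasible $\vec d$ is binary with $L(\vec d)=B$. Because every cost is positive and the padding costs satisfy $2V+1>B$ and $2U+1>B$, a binary $\vec d$ with $L(\vec d)=B$ must have $d_{n+1}=d_{n+2}=0$. The remaining argument of Theorem~\ref{thm:ballip-reduction} then extracts an E-kSSP witness. Conversely, any witness $S$ gives the feasible point $\vec d=\boldsymbol{\chi}_S$ with zero padding displacement.

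\textbf{Nonnegative versus unrestricted variables.} I will point out that the forcing inequality $F(\vec d)-L(\vec d)\ge0$ and the subsequent deductions never used the constraints $d_i\ge -u_i$. So any $\vec x\in\Z^m$ feasible for the new instance already has $\vec d\in\{0,1\}^m$, and therefore $\vec x=\vec u+\vec d\ge 0$ automatically. Consequently the instance is a yes-instance over $\Z^m$ if and only if it is one over $\Z^m_{\ge0}$, and both are equivalent to the E-kSSP instance.

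The main obstacle I expect is making $r^2$ a perfect square without disturbing the forcing mechanism. A zero-cost padding coordinate would be free to move and would break the argument. The parity trick with two large-cost coordinates, which factors $N+V^2$ as $1\cdot(N+V^2)$, is what I would try first.
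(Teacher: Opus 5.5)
Your construction is correct, but it fixes the parity of the squared radius differently from the paper.

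The paper transforms the E-kSSP instance itself. It adjoins a large item when $k$ is even, and then replaces $a_i$ by $2(a_i+1)$ and $\beta$ by $2(\beta+k)$. After this, every $u_i$ is even and $B$ is odd, so $R=\norm{\vec u}^2+B$ is odd. It then adjoins a single coordinate with $u_0=(R-1)/2$ and $c_0=R$, using $s^2+(2s+1)=(s+1)^2$. It needs the items to be positive so that $c_0=R>B$ holds strictly.

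You leave the source instance untouched and spend one extra padding coordinate $V$, whose parity you choose so that $N+V^2$ is odd. Your second padding coordinate then plays exactly the role of the paper's $u_0$.

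Your route gives a uniform argument that does not depend on the source instance. You never have to check that the item transformations preserve witnesses. The strict inequality for the last padding cost also comes for free, since $V\ge 1$ gives $2U+1=N+V^2\ge B+1$. The price is that you always use dimension $n+2$, whereas the paper gets $n+1$ when $k$ is already odd.

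One small correction: the inequality you actually need is $2U+1>B$, which follows from $2U+1=N+V^2\ge B+1$. Your stated claim $U>B/2$ can fail in the trivial case $\vec u=\vec 0$, $B=0$, $V=1$, where $U=0$. This does not affect the argument, since $2U+1=1>0=B$ still excludes that padding coordinate.
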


\begin{proof}
Start with an E-kSSP instance. If its cardinality target $k$ is even,
adjoin one item $A=a([n])+1$ and replace $(\beta,k)$ by
$(\beta+A,k+1)$. Any witness for the new instance must contain the new
item, so this preserves the witnesses and makes $k$ odd. Next replace
$a_i$ by $2(a_i+1)$ and $\beta$ by $2(\beta+k)$. Since every admissible
set has exactly $k$ elements, this transformation also preserves the
witnesses. Thus we may assume that all items are positive and even and that
$k$ is odd.

Apply Theorem~\ref{thm:ballip-reduction} to the resulting instance and write
$R=\norm{\vec u}^2+B$. Each $u_i=Ma_i$ is even, whereas
$B=2M\beta+k$ is odd, so $R$ is odd. Put
\[
  s=\frac{R-1}{2},\qquad u_0=s,\qquad c_0=2s+1=R,
\]
and adjoin this coordinate to $\vec u$ and $\vec c$, leaving $B$ unchanged.
For $\vec u'=(u_0,\vec u)$ and $\vec c'=(c_0,\vec c)$, set
\[
  (r')^2=\norm{\vec u'}^2+B,
  \qquad
  \gamma'=(\vec c')^\top\vec u'+B.
\]
Then $(r')^2=s^2+R=(s+1)^2$, so $r'=s+1$ is an integer. Moreover,
$c_i'=2u_i'+1$ in every coordinate, and hence for
$\vec x'=\vec u'+\vec d'$ the same calculation as in
Theorem~\ref{thm:ballip-reduction} gives
\[
  F'(\vec d')-L'(\vec d')=\sum_i d_i'(d_i'-1)\ge0,
\]
with equality exactly for binary $\vec d'$. Since every preprocessed item is
positive, $c_0=R>B$, so a binary vector satisfying $L'(\vec d')=B$ must have
$d_0'=0$. Its remaining coordinates therefore encode exactly an E-kSSP
witness. Finally, the squeeze
$B\le L'(\vec d')\le F'(\vec d')\le B$ uses only the displayed inequality,
which holds for every integer $\vec d'$. Thus the construction is equivalent
over both $\Z^m_{\ge0}$ and $\Z^m$, and all data have polynomial encoding
length.
\end{proof}

Del Pia~\cite[Section~3, Theorem~3]{delpia2024} proves strong NP-hardness and
constant-accuracy inapproximability for the general mixed-integer trust-region
problem using a Max-Cut reduction, and separately proves feasibility hardness
using CVP. Here the approximation notion is normalized by the attainable
objective range. Corollary~\ref{thm:tr} shows that exact NP-hardness already
holds with a scalar multiple of the standard lattice, no shift, an
origin-centered ball, and a linear objective. The hardness of this restricted case is weak, as shown by the
pseudo-polynomial algorithms in Section~\ref{sec:exact}. 

\begin{corollary}[Integer trust region with the simplest lattice]\label{thm:tr}
The integer trust-region problem
\[
  \min\{\,\vec x^\top H\vec x+\vec h^\top\vec x : \norm{\vec x}\le1,\ \vec
  x\in\Lambda\,\}
\]
where $H\in\Q^{n\times n}$ is symmetric and $\vec h\in\Q^n$, is NP-hard already when $H=0$ and $\Lambda=\tfrac1r\Z^n$ for a positive integer $r$, a scalar multiple of
the standard lattice with no shift and with rational generators.
\end{corollary}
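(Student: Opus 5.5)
We reduce from the decision version of \BallIP{} with an integral radius, as provided by Lemma~\ref{lem:ballip-integer-radius}. That lemma produces, from an E-kSSP instance, data $\vec c'\in\Z^m$, a positive integer radius $r'$, and a threshold $\gamma'$. The lemma also shows that the E-kSSP instance is a yes-instance if and only if some $\vec x\in\Z^m$ satisfies $\norm{\vec x}\le r'$ and $(\vec c')^\top\vec x\ge\gamma'$. The variables here range over all of $\Z^m$, which matters because the trust-region lattice has no sign restriction.

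The plan is to rescale the problem. Set $\vec y=\vec x/r'$. Then $\vec x\in\Z^m$ with $\norm{\vec x}\le r'$ holds exactly when $\vec y\in\tfrac1{r'}\Z^m$ with $\norm{\vec y}\le1$. Now take $H=0$ and $\vec h=-r'\vec c'$. The objective becomes $\vec h^\top\vec y=-(\vec c')^\top\vec x$, so minimizing it is the same as maximizing $(\vec c')^\top\vec x$ over the same feasible set. Therefore the optimal value of the trust-region instance is at most $-\gamma'$ if and only if the \BallIP{} decision instance is a yes-instance. An algorithm that solves this trust-region problem exactly would decide E-kSSP, and by Lemma~\ref{lem:ekssp} the latter is NP-complete. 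Since $\vec h\in\Z^m\subseteq\Q^m$, the generators $\tfrac1{r'}\vec e_i$ are rational with polynomial encoding length, and $r'$ is polynomially encoded, the reduction runs in polynomial time.

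The step that needs care is the use of integrality of $r$: the lattice $\tfrac1r\Z^n$ must have rational generators. This is exactly why we invoke Lemma~\ref{lem:ballip-integer-radius} rather than Theorem~\ref{thm:ballip-reduction} directly. A second point to state is what ``NP-hard'' means for an optimization problem: we show that computing the exact optimum, or deciding whether the optimum is at most a given rational value, is NP-hard. Because the optimum equals $-\max(\vec c')^\top\vec x/1$ up to the known scaling, the equivalence above carries over immediately. Everything else is routine bookkeeping.
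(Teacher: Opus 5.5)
Your proposal is correct and matches the paper's proof: you invoke Lemma~\ref{lem:ballip-integer-radius} to get hard instances with integral radius over all of $\Z^n$, then set $\Lambda=\tfrac1r\Z^n$, $H=0$, $\vec h=-r\vec c$, and observe that the trust-region optimum is at most $-\gamma$ exactly when the \BallIP{} instance has optimum at least $\gamma$. You also correctly note that the integral radius is what keeps the generators rational.
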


\begin{proof}
By Lemma~\ref{lem:ballip-integer-radius}, the hard Euclidean
\BallIP{} instances may be chosen with $r\in\Z_{>0}$ and remain hard when
the variables range over all of $\Z^n$. For such an instance, set
$\Lambda=\tfrac1r\Z^n$, $H=0$, and $\vec h=-r\vec c$. Every
$\vec y\in\Lambda$ has the form $\vec y=\vec x/r$ with $\vec x\in\Z^n$, and
then $\norm{\vec y}\le1$ if and only if $\norm{\vec x}\le r$, while
$\vec h^\top\vec y=-\vec c^\top\vec x$. Thus the trust-region optimum is at
most $-\gamma$ exactly when the corresponding \BallIP{} instance has
optimum at least $\gamma$.
\end{proof}


\subsection{NP-completeness for fixed integer exponents $p\ge2$}\label{sec:hardness}\label{sec:lp}

Fix an integer $p\ge2$. The input consists of an integer vector $\vec c$, a
nonnegative integer $\rho$, and an integer $\gamma$, and the constraint is
$\sum_i |x_i|^p\le\rho$ in the unrestricted formulation (equivalently
$\sum_i x_i^p\le\rho$ in the nonnegative formulation). The decision question
asks whether there is a vector in the indicated domain satisfying this budget
and $\vec c^\top\vec x\ge\gamma$. Here $\rho$ encodes the $p$-th power of the
radius, so that all data are integers. By the same
coordinatewise sign symmetry used in Section~\ref{sec:notation}, replacing
$\vec c$ by $|\vec c|$ lets us, without loss of generality for the hardness
proof below, assume $\vec x\in\Z_{\ge0}^n$ and work with
$\BallIPp{p}^{+}$. 

Next, we extend the proof for the Euclidean case to every fixed integer exponent
$p\ge2$, to obtain the following family-wide hardness result.

\begin{theorem}\label{thm:lp}
For every fixed integer $p\ge2$, the decision versions of
$\BallIPp{p}^{+}$ and $\BallIPp{p}$ are NP-complete.
\end{theorem}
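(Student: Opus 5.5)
The plan is to imitate the binary-forcing argument of Theorem~\ref{thm:ballip-reduction}. Membership in NP is immediate, as before: any feasible $\vec x$ has $|x_i|^p\le\rho$, so every coordinate has bit length polynomial in $\langle\rho\rangle$. For hardness I again reduce from E-kSSP (Lemma~\ref{lem:ekssp}) and work with $\BallIPp{p}^{+}$, writing $\vec x=\vec u+\vec d$ around a large integer center $\vec u$.

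The key choice is to take each cost to be the \emph{secant slope} $c_i=(u_i+1)^p-u_i^p$. Set $F(\vec d)=\sum_i\bigl((u_i+d_i)^p-u_i^p\bigr)$ and $L(\vec d)=\sum_i c_id_i$. Since $t\mapsto t^p$ is strictly convex on the integers, we get $(u_i+d_i)^p-u_i^p\ge c_id_i$, with equality exactly when $d_i\in\{0,1\}$. Moreover, the excess for $d_i\notin\{0,1\}$ is at least the second difference $(u_i+2)^p-2(u_i+1)^p+u_i^p$ or $(u_i-1)^p-2u_i^p+(u_i+1)^p$, which is at least $p(p-1)(u_i-1)^{p-2}=:g_i$. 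This is where $p>2$ differs from the Euclidean case: $c_i$ is a degree-$(p-1)$ polynomial in $u_i$, so it cannot be made exactly affine in $a_i$. I therefore do not force $F=L=B$ exactly. Instead I set
\[
\rho=\textstyle\sum_iu_i^p+B_{\mathrm{hi}},\qquad \gamma=\vec c^\top\vec u+B_{\mathrm{lo}},
\]
with a window width satisfying $0\le B_{\mathrm{hi}}-B_{\mathrm{lo}}<\min_ig_i$. Feasibility then gives $B_{\mathrm{lo}}\le L\le F\le B_{\mathrm{hi}}$. This forces $\sum_i(F_i-L_i)<\min g_i$, hence $\vec d=\boldsymbol\chi_S$ for some $S\subseteq[n]$, and also $L(\boldsymbol\chi_S)=\sum_{i\in S}c_i\in[B_{\mathrm{lo}},B_{\mathrm{hi}}]$. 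Conversely, any binary $\vec d$ with $L$ in the window is feasible, because $F=L$ in that case.

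It remains to choose $u_i=U+Ka_i$ so that $\sum_{i\in S}c_i$ lies in a suitable window iff $|S|=k$ and $a(S)=\beta$. Expanding gives
\[
c_i=pU^{p-1}+p(p-1)U^{p-2}\Bigl(Ka_i+\tfrac12\Bigr)+R_i,\qquad |R_i|\le C_pU^{p-3}(Ka_{\max}+1)^{p}.
\]
Summing over $S$ yields $pU^{p-1}|S|+p(p-1)U^{p-2}\bigl(Ka(S)+|S|/2\bigr)+O\bigl(nC_pU^{p-3}(Ka_{\max}+1)^p\bigr)$. I will take $K=2n+2$ so that the middle "digit" separates distinct values of $Ka(S)+|S|/2$ by at least $p(p-1)U^{p-2}$. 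I will then take $U$ a sufficiently large polynomially-encoded integer, e.g.\ a power of $2$ exceeding $\mathrm{poly}(n,K,a_{\max},p)$, so that the error term is far below $U^{p-2}$. The window is centered at $pU^{p-1}k+p(p-1)U^{p-2}(K\beta+k/2)$ with half-width a small fraction of $U^{p-2}$, say $U^{p-2}/4$. This window is narrower than $\min g_i\approx p(p-1)U^{p-2}$, yet wider than the accumulated error. The top digit then pins down $|S|=k$, since $pU^{p-1}$ dominates everything else, and the next digit pins down $a(S)=\beta$. For $p=2$ all remainders vanish and the construction collapses to Theorem~\ref{thm:ballip-reduction}. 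Transfer to the signed problem $\BallIPp{p}$ follows by the sign symmetry of Section~\ref{sec:notation}, since all $c_i>0$.

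The main obstacle is the bookkeeping in this digit-separation step. Concretely, one must show that the lower-order remainder $\sum_{i\in S}R_i$, the half-integer shift $|S|/2$ (clear denominators by doubling if needed), and the window half-width all fit strictly inside both the digit spacing and the forcing gap $\min_ig_i$, with all of $U$, $K$, $\rho$, $\gamma$ of polynomial bit length for fixed $p$. I would first bound $|R_i|$ crudely via the binomial theorem and then choose $U$ as a power of $2$ exceeding $2^{p}n(Ka_{\max}+1)^{p}$ times a constant, and check the inequalities directly.
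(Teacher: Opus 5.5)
Your proposal is correct and is essentially the paper's proof: it uses the same secant-slope costs $c_i=q(u_i)=(u_i+1)^p-u_i^p$, the same convexity and second-difference binary forcing (Lemma~\ref{lem:tangent}), and the same two-digit decoding of $|S|$ and $a(S)$ around a large shift $U$. The paper streamlines your bookkeeping in three ways. It takes $u_i=U+a_i$, since your factor $K$ is unnecessary once the top digit fixes $|S|$. It writes $c_i=q(U)+q'(U)a_i+e_i$ with exact Taylor remainders $e_i\ge0$, so the window can be the one-sided interval $[M,M+E]$. It chooses $U$ relative to $T=\max\{\sum_i a_i,\beta\}$, which you should do too: your sample choice of $U$ omits $\beta$, so either include it or first dismiss instances with $\beta>\sum_i a_i$.
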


Our strategy will be again a reduction from E-kSSP, where the input is
$a_1,\ldots,a_n\in\Z_{\ge0}$, a target $\beta\in\Z_{\ge0}$, and a cardinality
$k\in\{0,\ldots,n\}$, and a witness is a set $S\subseteq[n]$ satisfying
$a(S)=\beta$ and $|S|=k$. The proof follows the same structure as the $p=2$ case.
We write $\vec x=\vec u+\vec d$, so that $d_i$ records how much the
coordinate $u_i$ is changed to obtain $x_i$. We choose the construction so
that every feasible solution has $d_i\in\{0,1\}$ for all $i$; then
$\vec d=\boldsymbol{\chi}_S$ records the selected set $S$. The choice of $U$ will also
ensure that feasibility forces $|S|=k$ and $a(S)=\beta$.
Geometrically, the same picture as Figure~\ref{fig:binary-forcing} applies after replacing the Euclidean ball by the corresponding $\ell_p$ ball.

\subsubsection{Forcing $\vec d$ to be binary}

For $t\ge0$, set $q(t)=(t+1)^p-t^p$. Here, $q(u)$ is the marginal increase in the $p$-th-power budget when
an integer coordinate is increased from $u$ to $u+1$. For an integer $d\ge-u$, define
$R_u(d):=(u+d)^p-u^p-q(u)d$. 

\begin{lemma}\label{lem:tangent}
For every integer $u\ge1$ and every integer $d\ge-u$, $R_u(d)\ge0$, with
equality exactly when $d\in\{0,1\}$. Moreover, if $d\notin\{0,1\}$, then
$R_u(d)\ge q(u)-q(u-1)$.
\end{lemma}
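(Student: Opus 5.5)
We will argue through the convexity of $f(t)=t^p$ on $[0,\infty)$, writing $R_u(d)$ as a telescoping sum of differences of the increments $q$. The line $t\mapsto u^p+q(u)(t-u)$ is the secant of $f$ through $(u,u^p)$ and $(u+1,(u+1)^p)$. Hence $R_u(d)$ measures how far the point $(u+d,(u+d)^p)$ lies above this secant. Because $q$ is strictly increasing on $\Z_{\ge0}$, the secant stays below $f$ at every integer outside $\{u,u+1\}$. The key identity is that for integer $t\ge0$ we have $t^p-u^p=\sum$ of consecutive increments $q(j)$.

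For $d\ge1$, write
\[
  R_u(d)=\sum_{j=u}^{u+d-1}\bigl(q(j)-q(u)\bigr).
\]
For $d\le 0$, write
\[
  R_u(d)=\sum_{j=u+d}^{u-1}\bigl(q(u)-q(j)\bigr).
\]
This works because $u^p-(u+d)^p=\sum_{j=u+d}^{u-1}q(j)$ and $-q(u)d=\sum_{j=u+d}^{u-1}q(u)$. Strict monotonicity of $q$ is immediate, since
\[
  q(t)=\sum_{i=0}^{p-1}\binom{p}{i}t^i
\]
has nonnegative coefficients, and the coefficient $\binom{p}{1}=p$ of $t$ is positive for $p\ge2$. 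Every summand in the two displays is therefore nonnegative, which gives $R_u(d)\ge0$.

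We then identify the equality cases and prove the quantitative bound.
\begin{itemize}
\item If $d=0$, the sum is empty. If $d=1$, the single summand is $q(u)-q(u)=0$. So $R_u(d)=0$ in both cases.
\item If $d\ge2$, the summand with $j=u+1$ gives $R_u(d)\ge q(u+1)-q(u)$.
\item If $d\le-1$, the summand with $j=u-1$ gives $R_u(d)\ge q(u)-q(u-1)$. This term exists and is well defined because $d\ge-u$ gives $u+d\le u-1$ with $u+d\ge0$, and $u\ge1$ makes $j=u-1\ge0$ valid.
\end{itemize}
These bounds are positive, so equality holds exactly for $d\in\{0,1\}$.

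It remains to compare $q(u+1)-q(u)$ with $q(u)-q(u-1)$ in the case $d\ge2$. Here $q(u+1)-q(u)$ is a second difference of $t^p$. It equals $\sum_{i}\binom{p}{i}\bigl((u+1)^i-u^i\bigr)$ over $i\le p-1$, a nonnegative combination of increasing functions of $u$. Thus $q(u+1)-q(u)\ge q(u)-q(u-1)$, because $q$ is convex: its coefficients are nonnegative and the variable is evaluated at $t\ge0$. This gives $R_u(d)\ge q(u)-q(u-1)$ in all cases $d\notin\{0,1\}$.

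The only mildly delicate step is this last monotonicity of the increments of $q$, together with the boundary case $d=-u$. For the boundary case, the summation simply runs from $j=0$ to $j=u-1$, and nothing else changes.
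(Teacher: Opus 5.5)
Your proof is correct and takes essentially the same route as the paper: you write $R_u(d)$ as a telescoping sum of the differences $q(j)-q(u)$ (or $q(u)-q(j)$ for $d\le0$), read off nonnegativity and the equality cases from the strict monotonicity of $q$, and get the bound for $d\ge2$ from the nondecreasing increments of $q$. The only cosmetic difference is how you justify monotonicity and convexity of $q$: you use its nonnegative binomial coefficients on $t\ge0$, whereas the paper uses $q''(t)=p(p-1)\bigl((t+1)^{p-2}-t^{p-2}\bigr)\ge0$.
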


\begin{proof}
The function $q(t)=(t+1)^p-t^p$ is strictly increasing for $t\ge0$.
The increments $q(t+1)-q(t)$ are nondecreasing in $t$, because
\[
q''(t)=p(p-1)\bigl((t+1)^{p-2}-t^{p-2}\bigr)\ge 0.
\]

If $d\ge1$, telescoping gives $(u+d)^p-u^p=\sum_{j=0}^{d-1}q(u+j)$, and
hence $R_u(d)=\sum_{j=1}^{d-1}\bigl(q(u+j)-q(u)\bigr)$. This is $0$ for
$d=1$. If $d\ge2$, every term is positive, and the first one satisfies
$q(u+1)-q(u)\ge q(u)-q(u-1)$. For $d=0$, $R_u(d)=0$ again.

Finally, let $d=-h$ with $1\le h\le u$. Telescoping downward gives
$R_u(-h)=\sum_{j=1}^{h}\bigl(q(u)-q(u-j)\bigr)$, a sum of
positive terms whose first term is $q(u)-q(u-1)$.
\end{proof}

The reason for choosing the objective coefficients from the increments
$q(u)$ is the following identity. For arbitrary integer base points
$u_i\ge1$, put $c_i=q(u_i)$. Then, for every integer vector $\vec d$ with $d_i\ge-u_i$,
\begin{equation}\label{eq:increment-identity}
\sum_i (u_i+d_i)^p-\sum_i u_i^p
=
\sum_i c_i d_i
+
\sum_i R_{u_i}(d_i).
\end{equation}
Thus the change in the norm budget equals the change in the linear objective
plus $\sum_i R_{u_i}(d_i)$. By Lemma~\ref{lem:tangent}, every term in
this sum is nonnegative, and all of them vanish exactly when
$d_i\in\{0,1\}$ for every $i$. For $p=2$, $q(t)=2t+1$ and
$R_u(d)=d(d-1)$, so \eqref{eq:increment-identity} is exactly the quadratic
identity used in the Euclidean version to force $d_i\in\{0,1\}$ for every $i$.

\subsubsection{Choosing $U$}
For the $p=2$ case, the quantity $B=2M\beta+k$ can be viewed as a
positional encoding in base $2M$ with $k$ as its lower digit. Since this digit is smaller than $2M$, the
equality $2M\beta+k=2Ma(S)+|S|$ forces $a(S)=\beta$ and $|S|=k$. The construction for general $p$ uses the same
idea, but there are now three different terms that must be kept from
interfering with one another. 

Define $T=\max\{\sum_i a_i,\beta\}$. For a shift $U>\max_i a_i$, write
$e_i=q(U+a_i)-q(U)-q'(U)a_i$ and $E=\sum_i e_i$. Thus $e_i$ is the difference between the actual increase
$q(U+a_i)-q(U)$ and the term $q'(U)a_i$. Convexity gives
$e_i\ge0$.

We choose $U$ so that feasibility of the constructed $\BallIPp{p}^{+}$
instance forces $d_i\in\{0,1\}$ for every $i$ and then forces the set
$S=\{i:d_i=1\}$ to be an E-kSSP solution. The inequality
$q(U)-q(U-1)>E$ ensures that if even one coordinate satisfies
$d_i\notin\{0,1\}$, its term $R_{u_i}(d_i)$ already exceeds the entire
allowance $E$. Once every $d_i$ is binary, $q'(U)>E$ ensures that changing
the subset sum by even one cannot be compensated by the terms $e_i$.
Finally, $q(U)>q'(U)T+E$ ensures that changing the cardinality by one cannot
be compensated by any possible change in the subset sum or by the terms
$e_i$.

The next lemma shows that all three inequalities can be enforced
simultaneously while keeping the encoding length of $U$ polynomial.

\begin{lemma}\label{lem:scale}
One can compute in polynomial time an integer $U>\max_i a_i$, of polynomial
encoding length, such that
\begin{equation}\label{eq:three-scales}
   q(U)>q'(U)T+E,\qquad
   q'(U)>E,\qquad
   q(U)-q(U-1)>E .
\end{equation}
\end{lemma}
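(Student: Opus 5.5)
Since $q$ is a polynomial of degree $p-1$ with leading term $pt^{p-1}$, we compare the growth rates of the four quantities as functions of $U$. Take $A=\max_i a_i$ and assume $U\ge 2A+1$, so $U>A$. The quantities $q(U)$, $q'(U)$ and $q(U)-q(U-1)$ have orders $U^{p-1}$, $U^{p-2}$ and $U^{p-2}$ respectively, with positive leading coefficients $p$, $p(p-1)$, $p(p-1)$. The error $E$ should be only $O(U^{p-3})$ up to factors polynomial in the $a_i$. So we will bound $E$ explicitly, pick $U$ equal to a polynomially large explicit integer, and check the three inequalities by direct estimates.

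\textbf{Bounding $E$.} By Taylor's theorem with integral remainder,
\[
e_i=\int_U^{U+a_i}(U+a_i-s)\,q''(s)\,ds\le \tfrac{a_i^2}{2}\max_{s\in[U,U+a_i]}q''(s).
\]
Here $q''(s)=p(p-1)\bigl((s+1)^{p-2}-s^{p-2}\bigr)$. For $p=2$ this is $0$, so $E=0$. For $p\ge3$, the mean value theorem gives $q''(s)\le p(p-1)(p-2)(s+1)^{p-3}\le p^3(2U)^{p-3}$ when $s\le U+A$ and $U\ge A+1$. Hence
\[
E\le \tfrac12 p^3 2^{p-3}\,\Bigl(\sum_i a_i^2\Bigr)U^{p-3}\le C_p T^2 U^{p-3}
\]
with $C_p$ depending only on $p$, using $\sum_i a_i^2\le(\sum_i a_i)^2\le T^2$.

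\textbf{Lower bounds and choice of $U$.} The binomial expansion gives $q(U)\ge pU^{p-1}$, and $q(U)-q(U-1)\ge q'(U-1)\ge p(p-1)(U-1)^{p-2}\ge p(p-1)(U/2)^{p-2}$, using convexity of $q$. Also $q'(U)=p((U+1)^{p-1}-U^{p-1})\le p(p-1)(U+1)^{p-2}\le p^{2}(2U)^{p-2}$.

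The second and third inequalities then hold once $U\ge D_p(T^2+1)$ for a suitable constant $D_p$ depending only on $p$, since the left sides are $\Omega(U^{p-2})$ while $E=O(T^2U^{p-3})$.

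For the first inequality we need $pU^{p-1}>p^2 2^{p-2}TU^{p-2}+C_pT^2U^{p-3}$. This holds when $U> 2p2^{p-2}T$ and $U^2>2C_pT^2/p$.

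So we take
\[
U=K_p\,(T+1)^2+2A+1,
\]
with an explicit constant $K_p$, for instance $K_p=p^3 4^{p}$. Its encoding length is $O(p+\log T)$, which is polynomial because $p$ is fixed, and it is computable in polynomial time. If the constants are delicate, an alternative is to double $U$ until all three inequalities, each checkable in polynomial time, hold. The asymptotics guarantee this stops after $O(p+\log T)$ steps.

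\textbf{Main obstacle.} The main difficulty is the bookkeeping of the $p$-dependent constants in the upper bound for $E$ versus the lower bounds, especially the third inequality, where $q(U)-q(U-1)$ and $q'(U)$ are both of order $U^{p-2}$. The $p=2$ case is handled separately because $E=0$ there: the inequalities become $2U+1>2T$, $2>0$ and $2>0$, all immediate.
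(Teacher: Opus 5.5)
Your proof is correct and follows essentially the paper's route. You bound $E=O_p(A_2U^{p-3})$ via Taylor's integral remainder, compare it against the binomial/convexity lower bounds $q(U)\ge pU^{p-1}$ and $q'(U),\,q(U)-q(U-1)=\Omega_p(U^{p-2})$, and then choose an explicit polynomially sized $U$; the only cosmetic differences are that you absorb $A_2\le T^2$ and take $U=K_p(T+1)^2+2A+1$, whereas the paper takes $U=1+\max\{\max_i a_i,\,2^pT,\,2^{2p-5}(p-1)A_2\}$, and your constant $K_p=p^3 4^p$ does check out.
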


\begin{proof}
Set $A_2=\sum_i a_i^2$. For fixed $p$ and large $U$, we have
$q(U)=\Theta_p(U^{p-1})$, $q'(U)=\Theta_p(U^{p-2})$,
$q(U)-q(U-1)=\Theta_p(U^{p-2})$, and
$E=O_p(U^{p-3}A_2)$. Thus $q(U)$ grows one power of $U$ faster than
$q'(U)$, while both $q'(U)$ and $q(U)-q(U-1)$ grow one power of $U$ faster
than $E$. Consequently, once $U$ is sufficiently large compared with the
E-kSSP data, the three inequalities in \eqref{eq:three-scales} hold.

If $p=2$, take $U=1+\max\{T,\max_i a_i\}$. Then $E=0$, $q'(U)=2$,
$q(U)-q(U-1)=2$, and $q(U)=2U+1>2T$.

Now suppose $p\ge3$ and take
\[
 U=
 1+\max\left\{
     \max_i a_i,\,
     2^pT,\,
     2^{2p-5}(p-1)A_2
 \right\}.
\]
The binomial expansion gives $q(U)\ge pU^{p-1}$,
$p(p-1)U^{p-2}\le q'(U)\le p2^{p-1}U^{p-2}$, and
$q(U)-q(U-1)=(U+1)^p-2U^p+(U-1)^p\ge p(p-1)U^{p-2}$.
Also, Taylor's formula with integral remainder gives
\[
0\le e_i
\le
\frac12\max_{[U,2U]}q''\,a_i^2
\le
p(p-1)2^{2p-6}U^{p-3}a_i^2,
\]
and therefore
$E\le p(p-1)2^{2p-6}U^{p-3}A_2$. The chosen value of $U$ implies
$q'(U)T<\frac p2U^{p-1}$ and $E<\frac p2U^{p-2}$, from which all three
inequalities in \eqref{eq:three-scales} follow. Since $p$ is fixed, the
displayed value of $U$ has polynomial encoding length and is computable in
polynomial time.
\end{proof}

\subsubsection{The reduction}

\begin{proof}[Proof of Theorem~\ref{thm:lp}]
The problem belongs to NP. Any feasible point satisfies
$0\le x_i^p\le\rho$, hence $0\le x_i\le\rho$, so a feasible point has
polynomial encoding length and the two defining inequalities can be checked
in polynomial time.

For NP-hardness, choose $U$ by Lemma~\ref{lem:scale} and define
$u_i=U+a_i$ and $c_i=q(u_i)>0$. By the definition of $e_i$,
\begin{equation}\label{eq:ci-three-parts}
        c_i=q(U)+q'(U)a_i+e_i.
\end{equation}
Thus $c_i$ is the sum of three terms: $q(U)$, which records that one more
item was selected; $q'(U)a_i$, which records the value $a_i$ of that item;
and $e_i$.

Set $M=q(U)k+q'(U)\beta$ and define the
$\BallIPp{p}^{+}$ instance by
\[
    \rho=\sum_i u_i^p+M+E,
    \qquad
    \gamma=\vec c^\top\vec u+M.
\]

Suppose first that $\vec x=\vec u+\vec d$ is feasible. The objective
constraint gives
\begin{equation}\label{eq:objective-increment}
        \sum_i c_i d_i\ge M.
\end{equation}
Using \eqref{eq:increment-identity}, the norm constraint gives
$\sum_i c_i d_i+\sum_iR_{u_i}(d_i)\le M+E$. Combining the two inequalities,
\begin{equation}\label{eq:R-budget}
        \sum_iR_{u_i}(d_i)\le E.
\end{equation}

We first show that every $d_i$ must be either $0$ or $1$. If some
$d_j\notin\{0,1\}$, Lemma~\ref{lem:tangent} gives
$R_{u_j}(d_j)\ge q(u_j)-q(u_j-1)$. Since $u_j=U+a_j\ge U$ and
$q(t)-q(t-1)$ is nondecreasing in $t$,
\[
R_{u_j}(d_j)
\ge q(u_j)-q(u_j-1)
\ge q(U)-q(U-1)
>E.
\]
This contradicts \eqref{eq:R-budget}. Therefore
$\vec d=\boldsymbol{\chi}_S$ for some $S\subseteq[n]$.

For such a vector $\vec d=\boldsymbol{\chi}_S$, every $R_{u_i}(d_i)$ is zero. Feasibility
therefore gives $M\le\sum_{i\in S}c_i\le M+E$. Using
\eqref{eq:ci-three-parts}, let
$\Delta_k=|S|-k$, $\Delta_a=a(S)-\beta$, and
$\varepsilon=\sum_{i\in S}e_i$. Since $0\le\varepsilon\le E$ and
$|\Delta_a|\le T$, we obtain
\begin{equation}\label{eq:two-scale-decode}
        0
        \le
        q(U)\Delta_k+q'(U)\Delta_a+\varepsilon
        \le
        E.
\end{equation}

We next show that $S$ has the required cardinality. If $\Delta_k\ge1$, then
$q(U)\Delta_k+q'(U)\Delta_a+\varepsilon\ge q(U)-q'(U)T>E$, contradicting the upper
bound in \eqref{eq:two-scale-decode}. If $\Delta_k\le-1$, then
$q(U)\Delta_k+q'(U)\Delta_a+\varepsilon\le -q(U)+q'(U)T+E<0$, contradicting the lower
bound. Hence $\Delta_k=0$, so $|S|=k$.

It remains to show that the selected items have sum $\beta$. With $\Delta_k=0$,
\eqref{eq:two-scale-decode} becomes
$0\le q'(U)\Delta_a+\varepsilon\le E$. If $\Delta_a\ge1$, then
$q'(U)\Delta_a+\varepsilon\ge q'(U)>E$, while if $\Delta_a\le-1$, then
$q'(U)\Delta_a+\varepsilon\le -q'(U)+E<0$. Both are impossible, so $\Delta_a=0$ and
therefore $a(S)=\beta$. Thus every feasible $\BallIPp{p}^{+}$ point
determines an E-kSSP witness.

Conversely, suppose $S$ is an E-kSSP witness. Take $\vec d=\boldsymbol{\chi}_S$ and
$\vec x=\vec u+\vec d$. Since $\vec d=\boldsymbol{\chi}_S$, every
$d_i\in\{0,1\}$, so all terms $R_{u_i}(d_i)$ vanish. Moreover,
\[
\sum_{i\in S}c_i
=
q(U)k+q'(U)\beta+\sum_{i\in S}e_i
=
M+\varepsilon
\]
for some $0\le\varepsilon\le E$. Hence the objective increment is at least
$M$, while by \eqref{eq:increment-identity} the norm increment is
$M+\varepsilon\le M+E$. Thus $\vec x$ is feasible.

All constructed quantities are integers of polynomial encoding length and
are computable in polynomial time for fixed $p$. Therefore
$\BallIPp{p}^{+}$ is NP-hard, and together with membership in NP it is
NP-complete. The sign-symmetry normalization from Section~\ref{sec:notation}
then gives NP-completeness of the unrestricted $\BallIPp{p}$ formulation
as well.
\end{proof}

\begin{remark}\label{rem:endpoint-norms}\label{prop:endpoint-norms}
    For completeness, we record the endpoint cases. For $p=1$ and $p=\infty$, both problems are trivially solvable in polynomial time. 
\end{remark}


\section{Exact algorithms}\label{sec:exact}

We now return to the Euclidean problem \BallIP. We give two
pseudo-polynomial exact algorithms. We use the preprocessing from
Section~\ref{sec:notation}, so throughout this section the active costs are
positive and primitive, $n\ge2$, and $r>0$; the variables may still be
unrestricted when a fixed objective level is considered. The Knapsack DP is
polynomial in the numerical value of the squared-radius budget $r^2$, whereas
the second dynamic program is polynomial in the binary encoding length of
$r^2$ but pseudo-polynomial in the cost magnitudes.

\subsection{Knapsack DP: polynomial in the radius magnitude}\label{sec:dp}

The ball constraint can be read as a knapsack of capacity $r^2$: assigning a
nonnegative value $m$ to coordinate $i$ uses $m^2$ units of capacity and earns
$c_i m$ units of profit. Hochbaum~\cite[Section~1]{hochbaum1995} gives a dynamic program
for nonlinear knapsack problems that applies directly here. We state the
recurrence for completeness. Its pseudo-polynomial running time, together with
the hardness result, shows that the problem is weakly
NP-hard. Throughout this subsection we use the nonnegative formulation
\BallIP$^+$.

\paragraph{The recurrence.}
For $j\in\{0,1,\dots,n\}$ and $b\in\{0,1,\dots,r^2\}$ define
\[
  P(j,b)=\max\Bigl\{\textstyle\sum_{i=1}^j c_i x_i :
  \sum_{i=1}^j x_i^2\le b,\ x_i\in\Z_{\ge0}\ \text{for all }i\Bigr\}.
\]
The base case is $P(0,b)=0$ for all $b$, and the optimal value is
$P(n,r^2)$. Processing coordinates one at a time, for $j\ge1$ and $b\ge0$,
\begin{equation}\label{eq:dprec}
  P(j,b)=\max_{0\le x_j\le\lfloor\sqrt b\rfloor}
  \Bigl[\,c_j\,x_j+P\bigl(j-1,\,b-x_j^2\bigr)\,\Bigr].
\end{equation}

An optimal solution is recovered by storing a predecessor choice for each state and backtracking from $P(n,r^2)$.

\begin{proposition}[Running time]\label{prop:dp}
The Knapsack DP \eqref{eq:dprec} is exact and uses
$O\!\left(n(r^2)^{3/2}\right)=O(nr^3)$ arithmetic operations on integers of
polynomial bit length. Consequently its bit running time is
$O(nr^3)\operatorname{poly}(L)$, where $L=\langle\vec c,r^2\rangle$.
\end{proposition}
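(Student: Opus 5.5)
We argue exactness and running time separately, working throughout with the nonnegative formulation \BallIP$^+$, which suffices by \eqref{eq:ballip}.

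\textbf{Exactness.} We prove by induction on $j$ that $P(j,b)$ as defined equals the value computed by \eqref{eq:dprec}, for every $b\in\{0,\dots,r^2\}$. The base case $j=0$ is immediate: the only vector is empty, and the empty sum is $0$.

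For the inductive step, fix any feasible $(x_1,\dots,x_j)$ with $\sum_{i\le j}x_i^2\le b$.
\begin{itemize}
\item Its last coordinate satisfies $x_j^2\le b$, so $x_j\le\lfloor\sqrt b\rfloor$.
\item Its prefix $(x_1,\dots,x_{j-1})$ is feasible for budget $b-x_j^2\ge0$.
\end{itemize}
Hence its value is at most $c_jx_j+P(j-1,b-x_j^2)$, so $P(j,b)$ is at most the right-hand side of \eqref{eq:dprec}.

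Conversely, for each admissible $x_j$, append $x_j$ to an optimal prefix for budget $b-x_j^2$. The result is feasible for budget $b$ and attains $c_jx_j+P(j-1,b-x_j^2)$. So the right-hand side is also at most $P(j,b)$, and equality holds.

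The optimum is $P(n,r^2)$, since $\lVert\vec x\rVert^2\le r^2$ is exactly the constraint $\sum_i x_i^2\le r^2$ (recall $r^2$ is an integer). Storing the maximizing $x_j$ in each state and backtracking from $P(n,r^2)$ recovers an optimal solution.

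\textbf{Operation count.} There are $(n+1)(r^2+1)$ states. Each state evaluates at most $\lfloor\sqrt b\rfloor+1\le r+1$ candidates, and each candidate costs $O(1)$ arithmetic operations: one square, one subtraction, one multiplication, one addition and one comparison. Summing gives $O(n\,r^2\cdot r)=O(n(r^2)^{3/2})=O(nr^3)$ operations.

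Computing $\lfloor\sqrt b\rfloor$ is not an extra cost. Either we let $x_j$ run upward while $x_j^2\le b$, or we note that $\lfloor\sqrt b\rfloor$ is monotone in $b$ and update it incrementally as $b$ increases.

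\textbf{Bit lengths.} This is the only point needing care.
\begin{itemize}
\item Every budget index lies in $[0,r^2]$.
\item Every $x_j\le r$, so $x_j^2\le r^2$.
\item Every stored value satisfies $0\le P(j,b)\le\sum_i c_i\lfloor\sqrt b\rfloor\le n\,\cmax\, r$.
\end{itemize}
All of these numbers therefore have $O(\log n+\log\cmax+\log r^2)=O(L)$ bits. Each arithmetic operation on such numbers costs $\operatorname{poly}(L)$ bit time, and multiplying by the operation count gives total bit time $O(nr^3)\operatorname{poly}(L)$.

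I do not expect a genuine obstacle here. The argument is a routine specialization of Hochbaum's recurrence, and only the bookkeeping of the square-root range and the value bounds needs stating.
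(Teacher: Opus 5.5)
Your proposal is correct and follows the paper's proof. The operation count is the same up to bookkeeping: you bound each of the $(n+1)(r^2+1)$ states by $r+1$ candidates, whereas the paper sums $\lfloor\sqrt b\rfloor+1$ over $b$; both give $O(n(r^2)^{3/2})$. The bit-length argument is also the same: stored values are objective values of feasible partial solutions. Your induction for exactness and your explicit bound $P(j,b)\le n\cmax r$ make precise what the paper asserts without proof.
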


\begin{proof}
The outer loop runs $n$ times. For each coordinate $j$, the table is filled for
all $b$ from $0$ to $r^2$, and for each $b$ the variable $x_j$ ranges over at
most $\lfloor\sqrt b\rfloor+1$ values. The total work for one coordinate is thus
\[
  \sum_{b=0}^{r^2}\bigl(\lfloor\sqrt b\rfloor+1\bigr)=O\bigl((r^2)^{3/2}\bigr),
\]
and multiplying by $n$ coordinates gives $O\bigl(n\,(r^2)^{3/2}\bigr)$
arithmetic operations. Every table value is an objective value of a feasible
partial solution and therefore has bit length polynomial in $L$; the same is
true of the stored predecessor indices. This gives the stated bit-complexity
bound.
\end{proof}

\begin{remark}
Hochbaum's nonlinear-knapsack dynamic program extends directly to
\BallIPp{p} for every fixed finite integer $p\ge2$. Together with
Theorem~\ref{thm:lp}, this makes the decision version weakly NP-complete.
\end{remark}

\subsection{Dynamic programming over candidate objective values}\label{sec:window}

The optimal value lies among at most
$\norm{\vec c}_1+1\le n\cmax+1$ integer objective values below the continuous
optimum $r\norm{\vec c}$. Choose an index $h$ for which $c_h$ is the smallest active cost and set
\[
  D_h:=\frac12\sqrt{\norm{\vec c}^2+(n-2)c_h^2}
  \le \cmax\sqrt{\frac{n-1}{2}}.
\]
For each candidate value $V$, it suffices to search an $\ell_\infty$-box of
radius $\lceil D_h\rceil+1$ around the continuous minimum-norm point
$(V/\norm{\vec c}^2)\vec c$. Thus neither the number of candidate levels nor
the width of the search box depends on the numerical value of $r$.

As fixed in Section~\ref{sec:notation}, \BallIP{} denotes the unrestricted
formulation, so the variables in the fixed-level subproblems are not constrained
to be nonnegative. The preprocessing at the start of this section nevertheless
allows us to assume that all active costs $c_i$ are positive and that $\vec c$
is primitive.

\begin{lemma}[Bounds on the optimal objective value]\label{lem:window}
The optimal value satisfies
\[
  r\norm{\vec c}-\norm{\vec c}_1
  \le \OPT
  \le r\norm{\vec c}.
\]
Hence $\OPT$ is an integer in
\[
  \left[
    \max\!\left\{0,\left\lceil r\norm{\vec c}-\norm{\vec c}_1\right\rceil\right\},
    \left\lfloor r\norm{\vec c}\right\rfloor
  \right],
\]
which contains at most $\norm{\vec c}_1+1\le n\cmax+1$ integers.
\end{lemma}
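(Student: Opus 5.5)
The upper bound is the continuous relaxation. Every feasible integer point $\vec x$ satisfies $\vec c^\top\vec x\le\norm{\vec c}\norm{\vec x}\le r\norm{\vec c}$ by Cauchy--Schwarz, so $\OPT\le r\norm{\vec c}$. Since $\OPT$ is an integer, this gives $\OPT\le\lfloor r\norm{\vec c}\rfloor$.

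For the lower bound I will exhibit an explicit feasible integer point close to the continuous optimizer $\vec y=r\vec c/\norm{\vec c}$. Under the preprocessing, all active costs are positive, so $y_i\ge0$. I will take $x_i=\lfloor y_i\rfloor$, rounding each coordinate toward the origin. Then $0\le x_i\le y_i$, hence $\norm{\vec x}\le\norm{\vec y}=r$ and $\vec x$ is feasible. Because $c_i>0$ and $y_i-x_i\in[0,1)$,
\[
  \vec c^\top\vec x\ \ge\ \vec c^\top\vec y-\sum_i c_i\ =\ r\norm{\vec c}-\norm{\vec c}_1 .
\]
So $\OPT\ge r\norm{\vec c}-\norm{\vec c}_1$. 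Since $\OPT$ is an integer, $\OPT\ge\lceil r\norm{\vec c}-\norm{\vec c}_1\rceil$. Also $\vec 0$ is feasible, so $\OPT\ge0$, which gives the max with $0$. If the preprocessing were not assumed, I would simply apply the sign normalization \eqref{eq:ballip} first: replace $\vec c$ by $|\vec c|$ and round $|y_i|$ down with the appropriate sign.

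It remains to count the integers in the interval. Let $a=r\norm{\vec c}-\norm{\vec c}_1$ and $b=r\norm{\vec c}$, so $b-a=\norm{\vec c}_1$ is an integer. If $a$ is an integer, then $\lceil a\rceil=a$ and $\lfloor b\rfloor=b$, giving exactly $\norm{\vec c}_1+1$ integers. Otherwise $\lfloor b\rfloor-\lceil a\rceil=\norm{\vec c}_1-1$, giving $\norm{\vec c}_1$ integers. Raising the left endpoint to $\max\{0,\cdot\}$ only shrinks the interval. Finally, $\norm{\vec c}_1\le n\cmax$.

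There is no real obstacle here. The only points needing care are the rounding direction, so that feasibility is preserved, and the off-by-one in the integer count.
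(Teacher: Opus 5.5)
Your proposal is correct and follows essentially the same route as the paper: Cauchy--Schwarz for the upper bound, coordinatewise rounding down of the continuous optimizer $r\vec c/\norm{\vec c}$ for the lower bound, and feasibility of $\vec 0$ for $\OPT\ge0$. You also carry out the integer count that the paper leaves implicit, including the observation that $\norm{\vec c}_1$ is an integer, so $\lceil\cdot\rceil$ and $\lfloor\cdot\rfloor$ cost at most one endpoint. That count is correct.
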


\begin{proof}
For every feasible $\vec x$, Cauchy--Schwarz gives
$\vec c^\top\vec x\le\norm{\vec c}\norm{\vec x}\le r\norm{\vec c}$.
For the lower bound, set
$\bar x_i=\lfloor r c_i/\norm{\vec c}\rfloor$. Then
$\norm{\bar{\vec x}}^2\le r^2$ and
$\vec c^\top\bar{\vec x}\ge r\norm{\vec c}-\norm{\vec c}_1$.
Finally, $\OPT\ge0$ because $\vec0$ is feasible.
\end{proof}

We scan these candidate values from largest to smallest. Fix one such value
$V$. The unique minimum-norm real point on the hyperplane
$\vec c^\top\vec x=V$ is
$\xc=(V/\norm{\vec c}^2)\vec c$.

The following lemma gives the distance bound used by the second dynamic
program.

\begin{lemma}[Closeness]\label{lem:close}
For every $V\in\Z$ and every real point $\vec x^c$ satisfying
$\vec c^\top\vec x^c=V$, there is an integer point $\hat{\vec x}$ satisfying
\[
  \vec c^\top\hat{\vec x}=V,
  \qquad
  \norm{\hat{\vec x}-\vec x^c}\le D_h.
\]
In particular, this holds for the continuous center
$\xc=(V/\norm{\vec c}^2)\vec c$, and a closest integer point on that objective
level also lies within distance $D_h$ of $\xc$.
\end{lemma}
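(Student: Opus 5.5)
The plan is to reduce the claim to a covering-radius bound for a sublattice of $\Lambda_0=\{\vec x\in\Z^n:\vec c^\top\vec x=0\}$ with an explicit basis, and then bound that covering radius with Babai's nearest-plane argument. The constant $D_h$ should come out as half the root of the sum of the squared basis lengths.

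First, since $\vec c$ is primitive, B\'ezout gives an integer point $\vec y$ with $\vec c^\top\vec y=V$. The integer points on level $V$ are then exactly $\vec y+\Lambda_0$. The real vector $\vec t:=\vec x^c-\vec y$ satisfies $\vec c^\top\vec t=0$, so it lies in the hyperplane $\vec c^\perp$. The claim therefore reduces to finding $\vec z\in\Lambda_0$ with $\norm{\vec t-\vec z}\le D_h$, and then taking $\hat{\vec x}=\vec y+\vec z$.

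Next, for the chosen index $h$ (with $c_h\neq0$ after preprocessing), I consider the $n-1$ vectors
\[
  \vec v_i=c_i\vec e_h-c_h\vec e_i,\qquad i\neq h .
\]
Each lies in $\Lambda_0$. They are linearly independent, because $\vec v_i$ is the only one with a nonzero $i$th coordinate. Hence they span $\vec c^\perp$, and they generate a full-rank sublattice $\Lambda'\subseteq\Lambda_0$ of that hyperplane. It then suffices to show that every point of $\vec c^\perp$, in particular $\vec t$, lies within $D_h$ of $\Lambda'$.

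For this I apply Babai's nearest-plane procedure to $\vec t$ with the ordered basis $(\vec v_i)_{i\neq h}$ and its Gram--Schmidt vectors $\vec v_i^*$. The procedure returns $\vec z\in\Lambda'$ with $\vec t-\vec z=\sum_i\theta_i\vec v_i^*$ and $|\theta_i|\le\tfrac12$. Since the $\vec v_i^*$ are mutually orthogonal and satisfy $\norm{\vec v_i^*}\le\norm{\vec v_i}$, this gives
\[
  \norm{\vec t-\vec z}^2\le\frac14\sum_{i\neq h}\norm{\vec v_i}^2
  =\frac14\sum_{i\neq h}\bigl(c_i^2+c_h^2\bigr)
  =\frac14\bigl(\norm{\vec c}^2+(n-2)c_h^2\bigr)=D_h^2 .
\]
This is exactly the definition of $D_h$, which confirms that this basis is the intended one. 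Choosing $h$ with $c_h$ smallest only serves to make the bound small, and it yields the stated estimate $D_h\le\cmax\sqrt{(n-1)/2}$.

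Finally, the ``in particular'' statements are immediate. The center $\xc$ satisfies $\vec c^\top\xc=V$, so the general statement applies to it. A closest integer point on the level is at least as close to $\xc$ as $\hat{\vec x}$, so it is also within $D_h$.

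The main step to get right is the nearest-plane bound. The naive rounding bound $\tfrac12\sum_i\norm{\vec v_i}$ would be too weak. One needs the orthogonal-decomposition version, with the squared sum and $\norm{\vec v_i^*}\le\norm{\vec v_i}$, to obtain exactly $D_h$.
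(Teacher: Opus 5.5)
Your proof is correct. It uses the same kernel vectors as the paper: your $\vec v_i$ is the paper's $-\vec w_i$, where $\vec w_i=c_h\vec e_i-c_i\vec e_h$. Where you differ is in how the rounding error is controlled.

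You note that rounding coefficients in the basis $(\vec v_i)$ only gives $\tfrac12\sum_{i\ne h}\norm{\vec v_i}$. You therefore pass to the Gram--Schmidt basis via Babai's nearest-plane guarantee. The paper shows that this weakness affects only \emph{deterministic} rounding. It writes $\vec x^c-\vec x_0=\sum_{i\ne h}\alpha_i\vec w_i$ in the non-orthogonal basis and rounds each $\alpha_i$ independently, rounding up with probability equal to its fractional part $\theta_i$. The rounding errors then have mean zero, so the cross terms $\langle\vec w_i,\vec w_j\rangle$ vanish in expectation. This gives $\mathbb E\norm{\hat{\vec x}-\vec x^c}^2=\sum_{i\ne h}\theta_i(1-\theta_i)\norm{\vec w_i}^2\le D_h^2$, so some realization attains the bound.

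The paper's route is self-contained: a short variance computation, with no orthogonalization and no appeal to Babai. As the paper itself notes, however, it is only an existence argument. Your route is constructive, since nearest-plane computes a point within $D_h$ in polynomial time. It also yields the intermediate bound $\tfrac12\bigl(\sum_{i\ne h}\norm{\vec v_i^*}^2\bigr)^{1/2}$, which can be noticeably smaller. For $\vec c=(1,\ldots,1)$ the Gram--Schmidt sum is $\sum_{j=1}^{n-1}(j+1)/j$, compared with $\sum_{i\ne h}\norm{\vec v_i}^2=2(n-1)$. Neither refinement is needed for the lemma.

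One minor point: the estimate $D_h\le\cmax\sqrt{(n-1)/2}$ holds for every choice of $h$, because $\norm{\vec c}^2+(n-2)c_h^2\le2(n-1)\cmax^2$ always. So it is not the minimality of $c_h$ that yields it.
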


\begin{proof}
Since $\gcd_i c_i=1$, there is $\vec x_0\in\Z^n$ with
$\vec c^\top\vec x_0=V$ by B\'ezout's identity. For each $i\ne h$, let
\[
  \vec w_i=c_h\vec e_i-c_i\vec e_h.
\]
These are integer kernel vectors. They are linearly independent. Indeed, if $
  \sum_{i\ne h}\lambda_i\vec w_i=0,
$ then coordinate $i\ne h$ equals $\lambda_i c_h$, forcing every
$\lambda_i=0$. Since there are $n-1$ of them, they form a real basis of
$H_0=\{\vec y\in\R^n:\vec c^\top\vec y=0\}$. They need not form a
lattice basis.

Write
\[
  \vec x^c-\vec x_0=\sum_{i\ne h}\alpha_i\vec w_i.
\]
For each $i\ne h$, write $\alpha_i=m_i+\theta_i$ with $m_i\in\Z$ and
$0\le\theta_i<1$, and independently choose an integer $A_i$ equal to $m_i+1$
with probability $\theta_i$ and to $m_i$ otherwise. Then
\[
  \mathbb E(A_i-\alpha_i)=0,
  \qquad
  \mathbb E(A_i-\alpha_i)^2=\theta_i(1-\theta_i)\le\frac14.
\]
Every realization of
\[
  \hat{\vec x}=\vec x_0+\sum_{i\ne h}A_i\vec w_i
\]
is integral and remains on the objective level. By independence and the zero
means, the cross terms vanish in expectation, so
\[
\begin{aligned}
  \mathbb E\norm{\hat{\vec x}-\vec x^c}^2
  &=\sum_{i\ne h}\mathbb E(A_i-\alpha_i)^2\norm{\vec w_i}^2\\
  &\le\frac14\sum_{i\ne h}(c_h^2+c_i^2)\\
  &=\frac14\bigl(\norm{\vec c}^2+(n-2)c_h^2\bigr)
   =D_h^2.
\end{aligned}
\]
Therefore at least one realization has distance at most $D_h$. Finally,
$\norm{\vec c}^2+(n-2)c_h^2\le 2(n-1)\cmax^2$, which gives
$D_h\le\cmax\sqrt{(n-1)/2}$.

The random rounding is only an existence argument. The dynamic program below
remains deterministic because it searches the entire resulting box; it does not
sample these rounding choices.
\end{proof}

\paragraph{A bounded dynamic program on one objective level.}
Fix a candidate value $V$, let
$s_i=\lfloor Vc_i/\norm{\vec c}^2\rceil$, write
$\vec s=(s_1,\ldots,s_n)$, and set $W=\lceil D_h\rceil+1$. We search only points in the box
\[
  s_i-W\le x_i\le s_i+W
  \qquad (i=1,\dots,n).
\]
At the optimal level this box contains a feasible optimal point. Indeed, let
$\vec x^\circ$ be a closest integer point to $\xc$ on
$\vec c^\top\vec x=\OPT$. By Lemma~\ref{lem:close},
$\norm{\vec x^\circ-\xc}\le D_h$. Moreover, on a fixed objective level,
$\vec x-\xc$ is orthogonal to $\xc$, so minimizing
$\norm{\vec x-\xc}$ is equivalent to minimizing $\norm{\vec x}$. Since some
feasible integer point attains $\OPT$, we have
$\norm{\vec x^\circ}^2\le r^2$. Finally,
$\norm{\vec s-\xc}_\infty\le\tfrac12$, hence
$\norm{\vec x^\circ-\vec s}_\infty\le D_h+\tfrac12\le W$.

Write $x_i=s_i+y_i$. Then $-W\le y_i\le W$ and
$\sum_i c_i y_i=\rho$, where
$\rho=V-\sum_i c_i s_i$. Since
$|s_i-x_i^c|\le\tfrac12$, we have
$|\rho|\le\tfrac12\norm{\vec c}_1$.

For $i=0,\dots,n$ and integer $p$, let $f_i(p)$ be the minimum of
$\sum_{j=1}^i(s_j+y_j)^2$ over choices satisfying
$\sum_{j=1}^i c_jy_j=p$ and $-W\le y_j\le W$, with value $+\infty$ if no such
choice exists. Then
\begin{equation}\label{eq:boxrec}
  f_i(p)
  =
  \min_{-W\le y_i\le W}
  \left[
    f_{i-1}(p-c_i y_i)+(s_i+y_i)^2
  \right],
  \qquad
  f_0(0)=0,
\end{equation}
and $f_0(p)=+\infty$ for $p\ne0$. Every reachable state satisfies
$|p|\le W\norm{\vec c}_1$.

We scan the integers from Lemma~\ref{lem:window} from largest to smallest, run
this dynamic program for each $V$, and stop at the first level for which
$f_n(\rho)\le r^2$.

\begin{theorem}\label{thm:window}
Let $L=\langle\vec c,r^2\rangle$. \BallIP{} can be solved exactly in
\[
  O\!\left(nW^2\norm{\vec c}_1^2\right)\operatorname{poly}(L)
  =
  O\!\left(n^4\cmax^4\right)\operatorname{poly}(L)
\]
bit time. More explicitly, the state and transition count is bounded, up to an
absolute constant, by
\[
  O\!\left(
    n\norm{\vec c}_1^2
    \bigl(1+\norm{\vec c}^2+(n-2)c_h^2\bigr)
  \right).
\]
In particular, the dependence on the radius is polynomial in the binary
encoding length of $r^2$.
\end{theorem}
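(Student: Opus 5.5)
The proof splits into correctness, a count of states and transitions, and a bit-size bound per operation.

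\emph{Correctness.} By Lemma~\ref{lem:window}, $\OPT$ is one of the at most $\norm{\vec c}_1+1$ scanned integers, and every $V>\OPT$ has no feasible integer point. Any $V$ at which the dynamic program reports $f_n(\rho)\le r^2$ is certified by an explicit integer point $\vec s+\vec y$ with $\vec c^\top(\vec s+\vec y)=V$ and norm at most $r$. So the scan never stops above $\OPT$.

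At $V=\OPT$, the argument preceding the theorem applies. It uses Lemma~\ref{lem:close} together with the fact that $\vec x-\xc\perp\xc$ on a fixed level. It shows that a minimum-norm integer point $\vec x^\circ$ on this level lies in the box $\norm{\vec x-\vec s}_\infty\le W$ and satisfies $\norm{\vec x^\circ}^2\le r^2$.

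Recurrence~\eqref{eq:boxrec} computes the exact minimum of $\sum_j (s_j+y_j)^2$ over the box subject to $\sum_j c_jy_j=\rho$. This is a standard induction on $i$, since the constraint and the objective are both separable. Hence $f_n(\rho)\le\norm{\vec x^\circ}^2\le r^2$, the scan stops exactly at $\OPT$, and backtracking through stored predecessors recovers an optimizer.

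\emph{Counting.} For one level, each stage $i$ has at most $2W\norm{\vec c}_1+1$ reachable states $p$, and each state has $2W+1$ transitions. That gives $O(nW^2\norm{\vec c}_1)$ work per level. Multiplying by the $\norm{\vec c}_1+1$ levels gives $O(nW^2\norm{\vec c}_1^2)$.

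Next substitute $W=\lceil D_h\rceil+1$. Then $W^2=O(1+D_h^2)=O\bigl(1+\norm{\vec c}^2+(n-2)c_h^2\bigr)$, which is the explicit count stated in the theorem. Using $D_h\le\cmax\sqrt{(n-1)/2}$ and $\norm{\vec c}_1\le n\cmax$ then gives $O(n\cdot n\cmax^2\cdot n^2\cmax^2)=O(n^4\cmax^4)$.

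\emph{Bit complexity.} This is the step I expect to need the most care, because one must check that no quantity depends on the magnitude of $r$ rather than on its encoding. The level endpoints $\lceil r\norm{\vec c}-\norm{\vec c}_1\rceil$ and $\lfloor r\norm{\vec c}\rfloor$ can be computed exactly as integer square roots: $\lfloor r\norm{\vec c}\rfloor=\lfloor\sqrt{r^2\norm{\vec c}^2}\rfloor$ via binary search or Newton's method in $\operatorname{poly}(L)$ time. The rounded centers $s_i=\lfloor Vc_i/\norm{\vec c}^2\rceil$ are rational roundings of $\operatorname{poly}(L)$-bit numbers. All values $f_i(p)$ are sums of $n$ squares of integers of magnitude at most $|s_i|+W$, and the comparison with $r^2$ is integer arithmetic. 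Each arithmetic operation therefore costs $\operatorname{poly}(L)$ bit time.

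The only dependence on $r$ enters through the sizes $V=O(r\norm{\vec c})$ and $s_i$, which are $\operatorname{poly}(L)$-bit. This yields the claimed polynomial dependence on the encoding of $r^2$.
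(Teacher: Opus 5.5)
Your proposal is correct and follows the paper's proof essentially step for step: levels above $\OPT$ are rejected because any accepted point would be feasible, the level $\OPT$ is accepted via the box argument built on Lemma~\ref{lem:close}, and the count is $O(nW\norm{\vec c}_1)$ states times $O(W)$ transitions times $\norm{\vec c}_1+1$ levels. Your bit-complexity paragraph, including the exact computation of the scan endpoints from integer square roots of $r^2\norm{\vec c}^2$, makes explicit what the paper summarizes as ``the integers stored in the table have polynomial bit length.''
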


\begin{proof}
If a scanned value $V$ is larger than $\OPT$, then the dynamic program cannot
return a point of squared norm at most $r^2$, because such a point would be a
feasible integer point with objective value $V>\OPT$.

At $V=\OPT$, the argument above shows that the search box contains an integer
point of squared norm at most $r^2$. Hence $f_n(\rho)\le r^2$. Since
Lemma~\ref{lem:window} places $\OPT$ in the scanned interval, the first accepted
level is exactly $\OPT$.

For a fixed level, the table has
$O(nW\norm{\vec c}_1)$ states and each state considers $O(W)$ transitions.
There are at most $\norm{\vec c}_1+1$ candidate levels, so the total number of
state transitions is $O(nW^2\norm{\vec c}_1^2)$. Since
$W^2=O(1+\norm{\vec c}^2+(n-2)c_h^2)=O(n\cmax^2)$ and
$\norm{\vec c}_1\le n\cmax$, this is $O(n^4\cmax^4)$ arithmetic transitions.
The integers stored in the table have polynomial bit length, so accounting for
integer additions, multiplications, and comparisons gives the stated
$\operatorname{poly}(L)$ factor.

\end{proof}

The two exact algorithms are complementary. The Knapsack DP is
pseudo-polynomial in the numerical value of $r^2$ and polynomial in the binary
encoding length of the objective coefficients, while the objective-level
algorithm is pseudo-polynomial in the objective coefficients and polynomial in
the binary encoding length of $r^2$. Thus \BallIP{} is polynomial-time solvable
whenever either the squared-radius budget or the objective coefficients are
polynomially bounded in the input length. These two bounds leave open the case in which both quantities can be numerically large; the covering-radius algorithm
in the next section addresses part of that case.

\begin{remark}[Approximation via nonlinear knapsack]
Hochbaum~\cite[Section~3]{hochbaum1995} gives an FPTAS for integer nonlinear
knapsack with separable nondecreasing concave profits and a separable
nondecreasing convex packing constraint. By definition, for every
$\varepsilon>0$, an FPTAS computes a feasible solution of value at least
$(1-\varepsilon)\OPT$ in time polynomial in the input encoding length and
$1/\varepsilon$. More generally, for every fixed finite integer $p\ge2$,
Hochbaum's scheme applies directly to $\BallIPp{p}^{+}$ with
$f_i(m)=c_i m$ and $g_i(m)=m^p$, and therefore also to the unrestricted
$\BallIPp{p}$ problem by the sign normalization in
Section~\ref{sec:notation}. Thus the FPTAS is not specific to the Euclidean
norm. For $p=2$, a related greedy bound, which is the additive bound of the
FPTAS of Hochbaum, is also useful for intuition. View the increment
$m-1\to m$ in coordinate $i$ as an item of profit $c_i$ and weight $2m-1$,
and process these increments in nonincreasing order of $c_i/(2m-1)$. If
$\hat{\vec x}$ is the feasible point obtained by stopping when the next
increment would exceed the budget, then
\[
  \OPT^+(\vec c,r^2)-\vec c^\top\hat{\vec x}\le \cmax.
\]
Like the standard rounded fractional knapsack bound, at most one increment is fractional, and the profit of that increment is bounded above by $\cmax$.
\end{remark}



\section{Objective levels and closest-vector geometry}\label{sec:cvp-geometry}

We now organize the feasible set by values of the linear objective.  On one
level $\vec c^\top\vec x=m$, the integer points form a translate of the kernel
lattice of $\vec c^\top$, and the ball cuts that hyperplane in a lower-dimensional
Euclidean ball.  This gives both an algorithmic use of CVP and, in the reverse
direction, a reduction from lattice rank $k$ CVP in dimension $d$ to \BallIP{} in dimension $d+k$.

\subsection{Objective levels and the kernel lattice}\label{sec:cvpalg}

The dynamic program of Subsection~\ref{sec:window} considers the possible objective values one at a time. Lemma~\ref{lem:window} leaves at most
$\norm{\vec c}_1+1\le n\norm{\vec c}_\infty+1$ candidate values, and each
candidate $m$ asks whether its objective level contains an integer point inside
the ball.

In this section, we show that a level is automatically feasible whenever the
radius of the corresponding ball section is at least the covering radius of
the kernel lattice. This bounds the number of levels that a top-down scan
must test before reaching a feasible one.

We continue with the unrestricted formulation used in
Subsection~\ref{sec:window}: according to the convention in
Section~\ref{sec:notation}, throughout this subsection
$\vec x\in\Z^n$, and the cost vector is positive and primitive after
preprocessing. Set
\[
H_0=\{\vec x\in\R^n:\vec c^\top\vec x=0\}, \qquad
  \Lambda_0=\{\vec x\in\Z^n:\vec c^\top\vec x=0\},
  \qquad
  \Lambda_m=\{\vec x\in\Z^n:\vec c^\top\vec x=m\}.
\]
Since $\vec c$ is primitive, a unimodular matrix $U\in\Z^{n\times n}$ (an
integer matrix with determinant $\pm1$) with
$\vec c^\top U=(1,0,\ldots,0)$ can be computed in polynomial time by integer
normal-form algorithms~\cite{kannanbachem1979}. Its first column is a
B\'ezout vector $\vec w_0$ satisfying $\vec c^\top\vec w_0=1$, and its remaining
columns form a basis of $\Lambda_0$. Define
\begin{equation}\label{eq:level-data}
\begin{aligned}
  \boldsymbol{\delta}&=\frac{\vec c}{\norm{\vec c}^2}-\vec w_0,
  &\rho_m^2&=r^2-\frac{m^2}{\norm{\vec c}^2},\\
  d_m&=\operatorname{dist}(m\boldsymbol{\delta},\Lambda_0)
  :=\min_{\vec v\in\Lambda_0}\norm{m\boldsymbol{\delta}-\vec v}.
\end{aligned}
\end{equation}

Then $\Lambda_m=m\vec w_0+\Lambda_0$. When $\rho_m^2\ge0$, the intersection
$\{\vec c^\top\vec x=m\}\cap r\mathbb B^n$ is a possibly degenerate ball
in that hyperplane, with center $(m/\norm{\vec c}^2)\vec c$ and radius $\rho_m$.

The integer points with objective value $m$ are exactly the points of
$\Lambda_m$. Therefore, level $m$ is feasible when some point of
$\Lambda_m$ lies within distance $\rho_m$ of the center of the slice. After
translating by $m\vec w_0$, this is equivalent to asking whether the point
$m\boldsymbol{\delta}$ is within distance $\rho_m$ of $\Lambda_0$. Hence, whenever
$\rho_m^2\ge0$,
\begin{equation}\label{eq:level-equivalence}
  \text{level }m\text{ is feasible}
  \quad\Longleftrightarrow\quad
  d_m\le\rho_m.
\end{equation}
Thus testing whether level $m$ is feasible is a Euclidean closest-vector
query in the fixed $(n-1)$-dimensional kernel lattice $\Lambda_0$, with target
$m\boldsymbol{\delta}$. Only the target changes with $m$.

The covering radius is taken in $H_0=\operatorname{span}(\Lambda_0)$:
\[
  \mu:=\sup_{\vec t\in H_0}\operatorname{dist}(\vec t,\Lambda_0).
\]
It is only an analysis parameter, not an input to the algorithm. Since
$m\boldsymbol{\delta}\in H_0$, every level with $\rho_m\ge\mu$ is feasible.
Thus only levels with $\rho_m<\mu$ can be infeasible; the scan may also test
the first guaranteed feasible level before stopping.

\begin{proposition}[Covering-radius level bound]\label{prop:coverlevels}
Let $\vec c\in\Z^n$ be nonzero and primitive, with $n\ge2$, and let $\mu$
be the covering radius of $\Lambda_0=\Z^n\cap\vec c^\perp$ in
$\vec c^\perp$. If $r\ge\mu$, every integer level
\[
  0\le m\le \norm{\vec c}\sqrt{r^2-\mu^2}
\]
is feasible. Consequently,
\[
  \OPT\ge
  \left\lfloor\norm{\vec c}\sqrt{r^2-\mu^2}\right\rfloor,
\]
and a top-down scan beginning at $\lfloor r\norm{\vec c}\rfloor$ needs to test
at most
\[
  1+\left\lceil
    \frac{\norm{\vec c}\mu^2}
         {r+\sqrt{r^2-\mu^2}}
  \right\rceil
  \le
  1+\left\lceil\frac{\norm{\vec c}\mu^2}{r}\right\rceil
\]
levels before reaching a feasible one.
\end{proposition}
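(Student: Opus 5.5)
The plan is to rely on the level equivalence \eqref{eq:level-equivalence} together with the fact that every target $m\boldsymbol{\delta}$ lies in $H_0=\operatorname{span}(\Lambda_0)$.

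\textbf{Step 1: feasibility of low levels.} Fix an integer $m$ with $0\le m\le\norm{\vec c}\sqrt{r^2-\mu^2}$. Squaring gives $m^2/\norm{\vec c}^2\le r^2-\mu^2$. Hence $\rho_m^2=r^2-m^2/\norm{\vec c}^2\ge\mu^2\ge0$, so $\rho_m\ge\mu$.

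Next I would check that $m\boldsymbol{\delta}\in H_0$. This holds because $\vec c^\top\boldsymbol{\delta}=\vec c^\top\vec c/\norm{\vec c}^2-\vec c^\top\vec w_0=1-1=0$. By the definition of $\mu$ as a supremum over $H_0$, and since the minimum in $d_m$ is attained (lattices are discrete), we get $d_m\le\mu\le\rho_m$. Then \eqref{eq:level-equivalence} shows that level $m$ is feasible.

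\textbf{Step 2: the bound on $\OPT$.} The integer $m_0:=\lfloor\norm{\vec c}\sqrt{r^2-\mu^2}\rfloor$ is nonnegative and lies in the admissible range. So level $m_0$ is feasible, and therefore $\OPT\ge m_0$.

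\textbf{Step 3: counting scanned levels.} A top-down scan starting at $\lfloor r\norm{\vec c}\rfloor$ tests the levels $\lfloor r\norm{\vec c}\rfloor,\lfloor r\norm{\vec c}\rfloor-1,\dots$. It stops no later than level $m_0$, which is guaranteed feasible. The number of tested levels is therefore at most
\[
  1+\lfloor r\norm{\vec c}\rfloor-m_0 .
\]
Writing $s=\sqrt{r^2-\mu^2}$, the difference $\lfloor r\norm{\vec c}\rfloor-\lfloor s\norm{\vec c}\rfloor$ of floors is at most $\lceil (r-s)\norm{\vec c}\rceil$. This uses the elementary inequality $\lfloor a\rfloor-\lfloor b\rfloor\le\lceil a-b\rceil$, valid because $\lfloor a\rfloor-\lfloor b\rfloor<a-b+1$ and the left side is an integer. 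Rationalizing gives
\[
  (r-s)\norm{\vec c}=\frac{\norm{\vec c}(r^2-s^2)}{r+s}=\frac{\norm{\vec c}\mu^2}{r+s}.
\]
This yields the first bound, and $r+s\ge r$ gives the second.

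\textbf{Main obstacle.} The only delicate point is the floor arithmetic in Step 3, which has to produce exactly the stated ceiling and not an extra $+1$. Everything else follows directly from \eqref{eq:level-equivalence} and the definition of $\mu$.
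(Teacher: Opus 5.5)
Your proposal is correct and follows the paper's proof essentially step for step. You show feasibility of the low levels via $d_m\le\mu\le\rho_m$ and \eqref{eq:level-equivalence}, deduce the lower bound on $\OPT$, and count levels by rationalizing $\norm{\vec c}\bigl(r-\sqrt{r^2-\mu^2}\bigr)$; your explicit checks that $\vec c^\top\boldsymbol{\delta}=0$ and that $\lfloor a\rfloor-\lfloor b\rfloor\le\lceil a-b\rceil$ just fill in two details the paper leaves implicit.
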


\begin{proof}
If $m\le\norm{\vec c}\sqrt{r^2-\mu^2}$, then
\[
  \rho_m^2
  =r^2-\frac{m^2}{\norm{\vec c}^2}
  \ge\mu^2.
\]
The point $m\boldsymbol{\delta}$ lies in $H_0$, so the definition of covering radius gives
$d_m\le\mu\le\rho_m$. Equation~\eqref{eq:level-equivalence} therefore implies that
level $m$ is feasible. This proves the lower bound on $\OPT$.

Hence a scan from $\lfloor r\norm{\vec c}\rfloor$ downward must stop no later
than $\lfloor\norm{\vec c}\sqrt{r^2-\mu^2}\rfloor$. The number of levels it
can inspect is at most
\[
  1+\left\lceil
    \norm{\vec c}\left(r-\sqrt{r^2-\mu^2}\right)
  \right\rceil
  =
  1+\left\lceil
    \frac{\norm{\vec c}\mu^2}
         {r+\sqrt{r^2-\mu^2}}
  \right\rceil,
\]
and the final inequality follows because the denominator is at least $r$.
\end{proof}

\begin{corollary}[Explicit kernel covering-radius bound]\label{cor:kernel-covering}
Let $h$ be the index used in Lemma~\ref{lem:close}, and recall
\[
  D_h=\frac12\sqrt{\norm{\vec c}^2+(n-2)c_h^2}.
\]
Then
\[
  \mu\le D_h\le \cmax\sqrt{\frac{n-1}{2}}.
\]
Consequently, if $r\ge D_h$, a top-down scan from
$\lfloor r\norm{\vec c}\rfloor$ needs to test at most
\[
  \min\!\left\{
    \norm{\vec c}_1+1,
    1+\left\lceil
      \frac{\norm{\vec c}D_h^2}
           {r+\sqrt{r^2-D_h^2}}
    \right\rceil
  \right\}
  \le
  1+\left\lceil\frac{\norm{\vec c}D_h^2}{r}\right\rceil
\]
levels before reaching a feasible one. In particular,
$r\ge\max\{D_h,\norm{\vec c}D_h^2\}$ is sufficient for this coarse bound to
require at most two tested levels.
\end{corollary}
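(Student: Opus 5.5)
The argument has three steps: bound $\mu$ by $D_h$ using Lemma~\ref{lem:close}, feed that bound into Proposition~\ref{prop:coverlevels}, and intersect the resulting level count with the trivial count from Lemma~\ref{lem:window}.

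\textbf{Step 1: $\mu\le D_h$.} Fix any $\vec t\in H_0$. Apply Lemma~\ref{lem:close} with $V=0$ and the real point $\vec x^c=\vec t$, which satisfies $\vec c^\top\vec t=0$. The lemma supplies an integer point $\hat{\vec x}$ with $\vec c^\top\hat{\vec x}=0$, so $\hat{\vec x}\in\Lambda_0$, and with $\norm{\hat{\vec x}-\vec t}\le D_h$. Hence $\operatorname{dist}(\vec t,\Lambda_0)\le D_h$ for every $\vec t\in H_0$. Taking the supremum over $\vec t$ gives $\mu\le D_h$. The second inequality, $D_h\le\cmax\sqrt{(n-1)/2}$, was already established at the end of the proof of Lemma~\ref{lem:close}.

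\textbf{Step 2: apply the proposition with $\mu$ replaced by $D_h$.} Assume $r\ge D_h$; then $r\ge\mu$, so Proposition~\ref{prop:coverlevels} applies. It bounds the number of tested levels by $1+\lceil\norm{\vec c}\mu^2/(r+\sqrt{r^2-\mu^2})\rceil$. This expression is nondecreasing in $\mu$ on $[0,r]$, since the numerator increases and the denominator decreases as $\mu$ grows. Replacing $\mu$ by $D_h\ge\mu$ therefore gives a valid upper bound. Separately, Lemma~\ref{lem:window} places $\OPT$ within at most $\norm{\vec c}_1+1$ candidate levels, so the scan never tests more than that. Combining the two bounds yields the stated minimum. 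The final inequality follows because $r+\sqrt{r^2-D_h^2}\ge r$.

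\textbf{Step 3: the two-level claim.} If $r\ge\norm{\vec c}D_h^2$, then $\norm{\vec c}D_h^2/r\le1$, so its ceiling is at most $1$ and the coarse bound gives at most $2$ levels. One degenerate case needs care: if $D_h=0$ the ratio is $0$. However, $D_h>0$ whenever $\vec c\ne\vec 0$, so no division issue arises. The hypothesis $r\ge D_h$ is included in the maximum so that Step 2 applies.

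None of the steps is a real obstacle. The only point that needs checking is the monotonicity in $\mu$ used in Step 2, which justifies substituting the upper bound $D_h$ into the proposition's formula rather than only into its coarse form.
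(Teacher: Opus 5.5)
Your proposal is correct and follows the paper's proof essentially step for step: Lemma~\ref{lem:close} with $V=0$ gives $\mu\le D_h$, Proposition~\ref{prop:coverlevels} combined with the monotonicity of $\mu\mapsto\mu^2/(r+\sqrt{r^2-\mu^2})$ gives the second term, and Lemma~\ref{lem:window} gives the first (the paper phrases the monotonicity as that of $r-\sqrt{r^2-a^2}$, which is the same thing). One small aside: the division worry in Step~3 is misplaced, since the denominators are $r$ and $r+\sqrt{r^2-D_h^2}$, and these are positive because $r>0$ after preprocessing, not because $D_h>0$.
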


\begin{proof}
Apply Lemma~\ref{lem:close} with $V=0$ to an arbitrary point of $H_0$.
It gives a point of $\Lambda_0$ within distance $D_h$, so the definition of
covering radius gives $\mu\le D_h$. The displayed uniform estimate is already
part of Lemma~\ref{lem:close}. If $r\ge D_h$, Proposition~\ref{prop:coverlevels}
and the monotonicity of $r-\sqrt{r^2-a^2}$ for $0\le a\le r$ give the second
term in the minimum, while Lemma~\ref{lem:window} gives the first. The final
inequality follows because $r+\sqrt{r^2-D_h^2}\ge r$.
\end{proof}

Proposition~\ref{prop:coverlevels} and Corollary~\ref{cor:kernel-covering} are independent of how the remaining levels
are tested. In particular, one may reuse the dynamic program for a fixed objective level
from Subsection~\ref{sec:window}, replacing its $O(\norm{\vec c}_1)$-level scan by
the potentially shorter bound above. To keep the dependence on the cost
magnitudes polynomial in their binary encoding length at each level, we
instead use an exact-CVP subroutine, with a separate exponential factor in the dimension. The
Voronoi cell consists of the points of $H_0$ at least as close to the origin
as to any other point of $\Lambda_0$. It can be computed once, after which
the deterministic Micciancio--Voulgaris algorithm answers each query in
$2^{O(n)}\operatorname{poly}(L)$ time~\cite{miccianciovoulgaris}.

\paragraph{Top-down scan.}
\begin{enumerate}[nosep,label=\arabic*.]
\item Compute $\vec w_0$, a basis of $\Lambda_0$, and its Voronoi cell.
\item For $m=\lfloor r\norm{\vec c}\rfloor,\lfloor r\norm{\vec c}\rfloor-1,\ldots,0$,
find a closest vector $\vec v\in\Lambda_0$ to $m\boldsymbol{\delta}$.
\item At the first level with
$\norm{\vec v-m\boldsymbol{\delta}}^2\le r^2-m^2/\norm{\vec c}^2$,
return $m\vec w_0+\vec v$ and stop.
\end{enumerate}

\begin{figure}[t]
\centering
\begin{tikzpicture}[>=stealth, node distance=0.55cm and 0.9cm,
  box/.style={draw, rounded corners=2pt, align=center, font=\small, inner sep=5pt, minimum height=0.9cm},
  dec/.style={draw, diamond, aspect=2.6, align=center, font=\small, inner sep=1pt}]
  \node[box] (pre) {preprocessing (once):\\ $\vec w_0$, basis of $\Lambda_0$, Voronoi cell of $\Lambda_0$};
  \node[box, below=of pre] (init) {$m\leftarrow\lfloor r\norm{\vec c}\rfloor$};
  \node[box, below=of init] (cvp) {CVP query in $\Lambda_0$ with target $m\boldsymbol\delta$:\\ closest $\vec v\in\Lambda_0$, $d_m^2=\norm{\vec v-m\boldsymbol\delta}^2$};
  \node[dec, below=of cvp] (test) {$d_m^2\le r^2-\dfrac{m^2}{\norm{\vec c}^2}$?};
  \node[box, right=1.6cm of test] (dec) {$m\leftarrow m-1$};
  \node[box, below=of test] (ret) {return $\vec x=m\vec w_0+\vec v$\\ ($\vec c^\top\vec x=m=\OPT$, $\norm{\vec x}\le r$)};
  \draw[->] (pre) -- (init);
  \draw[->] (init) -- (cvp);
  \draw[->] (cvp) -- (test);
  \draw[->] (test) -- node[above, font=\small] {no} (dec);
  \draw[->] (dec) |- (cvp);
  \draw[->] (test) -- node[right, font=\small] {yes} (ret);
  \node[font=\small, align=left, anchor=west] at (5.3,-3.9) {number of loop passes:\\ $\le\norm{\vec c}_1+1$ (Lemma~\ref{lem:window})\\ $\le1+\lceil\norm{\vec c}\mu^2/r\rceil$ if $r\ge\mu$\\ (Proposition~\ref{prop:coverlevels})};
\end{tikzpicture}
\caption{The top-down scan of Theorem~\ref{thm:covscan}. Each pass is one
exact CVP query in the fixed lattice $\Lambda_0$; by
\eqref{eq:level-equivalence} the test succeeds exactly at feasible levels, so
the first success is the optimal value. The covering radius enters only in the
bound on the number of passes.}
\label{fig:scan-flow}
\end{figure}

\begin{theorem}[Exact algorithm from the covering-radius bound]\label{thm:covscan}
After the preprocessing of Section~\ref{sec:notation}, let
$\vec c\in\Z_{>0}^n$ be primitive, $n\ge2$, and $r>0$. Let $\mu$ be the
covering radius of $\Lambda_0=\Z^n\cap\vec c^\perp$ in $\vec c^\perp$.
The top-down scan solves \BallIP{} in
\[
  2^{O(n)}K\operatorname{poly}(L),\qquad
  K:=\begin{cases}
    \norm{\vec c}_1+1, & r<\mu,\\[2pt]
    \min\!\left\{\norm{\vec c}_1+1,
      1+\left\lceil\norm{\vec c}\mu^2/r\right\rceil\right\}, & r\ge\mu,
  \end{cases}
\]
where $L=\langle\vec c,r^2\rangle$. In particular, if $r\ge\mu$ and
$\norm{\vec c}\mu^2/r\le2^{O(n)}$ with a uniform constant in the exponent,
the running time is $2^{O(n)}\operatorname{poly}(L)$.
\end{theorem}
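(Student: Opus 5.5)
The proof has three parts: correctness of the scan, a bound on the number of loop passes, and the cost of each pass. For correctness, I will use the equivalence \eqref{eq:level-equivalence}. At each scanned level $m\le\lfloor r\norm{\vec c}\rfloor$ we have $\rho_m^2\ge0$, and level $m$ is feasible exactly when $d_m\le\rho_m$. An exact CVP oracle returns $\vec v$ with $\norm{\vec v-m\boldsymbol\delta}=d_m$, so the test in step~3 succeeds precisely at feasible levels. The returned point $m\vec w_0+\vec v$ lies in $\Lambda_m$. Translation by $m\vec w_0$ maps the center $(m/\norm{\vec c}^2)\vec c$ to $m\boldsymbol\delta$, so the returned point is within distance $\rho_m$ of the slice center and hence in the ball. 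By Lemma~\ref{lem:window}, $\OPT$ lies in the scanned range, and every level above $\OPT$ is infeasible. Therefore the first accepted level is $\OPT$, and the scan terminates at or before level $0$ because $\vec 0$ is feasible.

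For the pass count, Lemma~\ref{lem:window} bounds the number of levels between $\lfloor r\norm{\vec c}\rfloor$ and $\OPT$ by $\norm{\vec c}_1+1$. When $r\ge\mu$, Proposition~\ref{prop:coverlevels} independently gives at most $1+\lceil\norm{\vec c}\mu^2/r\rceil$. Taking the minimum gives $K$. Note that $\mu$ is used only in the analysis; the algorithm never needs to know it.

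For the per-pass cost, preprocessing computes $U$ with $\vec c^\top U=\vec e_1^\top$ in polynomial time~\cite{kannanbachem1979}. This yields $\vec w_0$ and a basis of $\Lambda_0$. To apply Micciancio--Voulgaris in the rank-$(n-1)$ lattice $\Lambda_0$, I will work in coordinates of this basis via the Gram matrix, or directly in $\R^n$ with targets in $H_0=\operatorname{span}\Lambda_0$. The target $m\boldsymbol\delta$ is rational with denominator $\norm{\vec c}^2$, so I scale the lattice and target by $\norm{\vec c}^2$ as described in Section~\ref{sec:notation}. The Voronoi cell is then computed once in $2^{O(n)}\operatorname{poly}(L)$ time, and each query costs $2^{O(n)}\operatorname{poly}(L)$~\cite{miccianciovoulgaris}.

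The main obstacle is checking that every query's encoding length is $\operatorname{poly}(L)$ uniformly over the scan. Since $|m|\le r\norm{\vec c}$, the integer $m$ has bit length $O(L)$. The entries of $U$ have polynomial bit length, so $m\boldsymbol\delta$ and the comparison $\norm{\vec v-m\boldsymbol\delta}^2\le r^2-m^2/\norm{\vec c}^2$ can be carried out in exact rational arithmetic. Multiplying through by $\norm{\vec c}^2$, the comparison involves only integers of polynomial size. Multiplying the preprocessing cost plus $K$ query costs gives $2^{O(n)}K\operatorname{poly}(L)$. The final claim follows by substituting $K\le 2+\norm{\vec c}\mu^2/r\le2^{O(n)}$ under the stated uniform-constant hypothesis.
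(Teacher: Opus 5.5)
Your proposal is correct and follows essentially the same route as the paper: correctness from the level equivalence \eqref{eq:level-equivalence}, the pass count as the minimum of the bounds from Lemma~\ref{lem:window} and Proposition~\ref{prop:coverlevels}, and per-pass cost from a once-computed Voronoi cell of $\Lambda_0$ with Micciancio--Voulgaris queries. Your extra bookkeeping, namely that the returned point lies in the ball and that every query target and comparison has $\operatorname{poly}(L)$ bit length, simply spells out what the paper compresses into ``all other arithmetic has polynomial bit complexity.''
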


\begin{proof}
Lemma~\ref{lem:window} guarantees that the scan reaches a feasible level
within $\norm{\vec c}_1+1$ tests. When $r\ge\mu$,
Proposition~\ref{prop:coverlevels} gives the second bound on their number.
Each test is an exact Euclidean CVP query in the same lattice $\Lambda_0$.
Computing its Voronoi cell once and answering the queries with the
Micciancio--Voulgaris algorithm gives the stated running time. All other
arithmetic has polynomial bit complexity. By
\eqref{eq:level-equivalence}, the returned vector is feasible at the highest
feasible level and hence optimal. The scan does not require $\mu$.
\end{proof}

This complements the two dynamic programs with a different sufficient
condition for efficient exact solution. The Knapsack DP of
Subsection~\ref{sec:dp} is effective when the radius is numerically small, and
the dynamic program of Subsection~\ref{sec:window} when the costs are small.
For fixed $\vec c$ and $r\ge\mu$, the covering-radius bound on the number of
levels instead decreases as $r$ grows.


\subsection{A discriminant obstruction for objective-level lattices}\label{sec:slice-discriminant}

The covering-radius argument above uses the lattice carried by a fixed
objective level. Before turning to the reverse reduction from CVP to \BallIP,
it is natural to ask how general such a lattice can be. More specifically,
can the entire lattice underlying an arbitrary CVP instance be identified
isometrically with the difference lattice of an objective level of
$\mathbb Z^m$ with primitive integer normal vector? In general, the answer is
no: such codimension-one slices (of rank $m-1$) have cyclic discriminant group,
while an integral CVP lattice need not.

\paragraph{Lattice-theoretic definitions.}
For nonzero $\vec u\in\mathbb R^m$, its orthogonal complement is
$\vec u^\perp=\{\vec x\in\mathbb R^m:\vec u^\top\vec x=0\}$. A lattice
$\Lambda\subset\vec u^\perp$ is full-rank in this space if its rank is
$m-1$. For a basis $\vec b_1,\ldots,\vec b_{m-1}$, the covolume
$\operatorname{covol}(\Lambda)$ is the $(m-1)$-dimensional volume of the
fundamental parallelepiped
$\{\sum_{i=1}^{m-1}t_i\vec b_i:0\le t_i<1\}$. The dual lattice is
$\Lambda^*=\{\vec x\in\vec u^\perp:\langle\vec x,\vec y\rangle\in\mathbb Z
\text{ for every }\vec y\in\Lambda\}$. A lattice is integral if the inner
product of any two lattice vectors is an integer. For an integral lattice,
$\Lambda\subseteq\Lambda^*$, and its discriminant group is the finite quotient
$\Lambda^*/\Lambda$. Two lattices $\Lambda_1$ and $\Lambda_2$ are isometric if
there exists a bijection $\varphi:\Lambda_1\to\Lambda_2$ satisfying
$\langle\varphi(\vec x),\varphi(\vec y)\rangle=\langle\vec x,\vec y\rangle$
for all $\vec x,\vec y\in\Lambda_1$. An objective level
$\{\vec x\in\mathbb Z^m:\vec u^\top\vec x=j\}$ is generally a coset; its
difference lattice is $\mathbb Z^m\cap\vec u^\perp$.

Let $\vec u\in\mathbb Z^m$ be primitive and set
\[
    \Lambda_{\vec u}:=\mathbb Z^m\cap \vec u^\perp,
    \qquad
    \Lambda_{\vec u}^*:=\{\vec y\in \vec u^\perp:\vec y^\top\vec x\in\mathbb Z\text{ for every }\vec x\in\Lambda_{\vec u}\}.
\]

\begin{proposition}[Discriminant group of a codimension-one slice]
\label{prop:cyclic-discriminant-short}
For primitive $\vec u\in\mathbb Z^m$,
\[
    \Lambda_{\vec u}^*/\Lambda_{\vec u}\cong
    \mathbb Z/\|\vec u\|_2^2\mathbb Z.
\]
\end{proposition}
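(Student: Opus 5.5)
The plan is to identify $\Lambda_{\vec u}^*$ with the orthogonal projection of $\mathbb Z^m$ onto $\vec u^\perp$, and then compute the quotient via an explicit surjection onto $\mathbb Z/\norm{\vec u}^2\mathbb Z$.

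Write $\pi(\vec x)=\vec x-\frac{\vec u^\top\vec x}{\norm{\vec u}^2}\vec u$ for the orthogonal projection onto $\vec u^\perp$.

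\textbf{Step 1: $\Lambda_{\vec u}^*=\pi(\mathbb Z^m)$.}

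For the inclusion $\supseteq$, take $\vec x\in\mathbb Z^m$ and $\vec y\in\Lambda_{\vec u}$. Since $\vec y\perp\vec u$, we get $\langle\pi(\vec x),\vec y\rangle=\langle\vec x,\vec y\rangle\in\mathbb Z$.

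For the inclusion $\subseteq$, I will use primitivity. Because $\vec u$ is primitive, $\Lambda_{\vec u}$ is a primitive sublattice of $\mathbb Z^m$: the quotient $\mathbb Z^m/\Lambda_{\vec u}$ embeds in $\mathbb Z$ via $\vec x\mapsto\vec u^\top\vec x$, so it is torsion-free. Hence a basis $\vec b_1,\dots,\vec b_{m-1}$ of $\Lambda_{\vec u}$ extends to a basis of $\mathbb Z^m$ by adding one vector $\vec w$ with $\vec u^\top\vec w=1$. This is exactly the unimodular-matrix fact used in Section~\ref{sec:cvpalg}.

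Now let $\vec z\in\Lambda_{\vec u}^*$. Choose $\vec x\in\mathbb Z^m$ by prescribing $\langle\vec x,\vec b_i\rangle=\langle\vec z,\vec b_i\rangle$ for all $i$, which is possible by unimodularity of the extended basis. Then $\pi(\vec x)-\vec z\in\vec u^\perp$ is orthogonal to every $\vec b_i$, which span $\vec u^\perp$. So $\pi(\vec x)=\vec z$.

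\textbf{Step 2: a surjection with kernel $\Lambda_{\vec u}$.}

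Define $\phi:\mathbb Z^m\to\Lambda_{\vec u}^*/\Lambda_{\vec u}$ by $\phi(\vec x)=\pi(\vec x)+\Lambda_{\vec u}$. By Step 1 it is surjective. Since $\pi(\vec u)=0$ and $\pi$ fixes $\Lambda_{\vec u}$, the kernel contains $\Lambda_{\vec u}+\mathbb Z\vec u$.

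Conversely, suppose $\pi(\vec x)\in\Lambda_{\vec u}$. Then $\vec x-\pi(\vec x)=t\vec u$ with $t=\vec u^\top\vec x/\norm{\vec u}^2$. This vector is integral, and primitivity of $\vec u$ forces $t\in\mathbb Z$. So $\vec x\in\Lambda_{\vec u}+\mathbb Z\vec u$, and the kernel is exactly $\Lambda_{\vec u}+\mathbb Z\vec u$. Therefore
\[
\Lambda_{\vec u}^*/\Lambda_{\vec u}\cong\mathbb Z^m/(\Lambda_{\vec u}\oplus\mathbb Z\vec u).
\]

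\textbf{Step 3: identify the quotient.}

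Map $\mathbb Z^m\to\mathbb Z$ by $\vec x\mapsto\vec u^\top\vec x$. This map is surjective because $\gcd(u_i)=1$, and it has kernel $\Lambda_{\vec u}$. It carries $\Lambda_{\vec u}\oplus\mathbb Z\vec u$ onto $\norm{\vec u}^2\mathbb Z$. Hence
\[
\mathbb Z^m/(\Lambda_{\vec u}\oplus\mathbb Z\vec u)\cong\mathbb Z/\norm{\vec u}^2\mathbb Z,
\]
which is cyclic, as claimed.

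\textbf{Main obstacle.} I expect the delicate step to be the inclusion $\Lambda_{\vec u}^*\subseteq\pi(\mathbb Z^m)$. It needs $\Lambda_{\vec u}$ to be primitive, so that the restriction map $\mathbb Z^m\to\operatorname{Hom}(\Lambda_{\vec u},\mathbb Z)$ is surjective. This is where primitivity of $\vec u$ genuinely enters, and I will justify it via the basis completion above.

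As a sanity check, the determinant identity $\operatorname{covol}(\Lambda_{\vec u})^2=\norm{\vec u}^2$ also gives the order of the quotient as $\norm{\vec u}^2$, consistent with the result.
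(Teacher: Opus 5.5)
Your proof is correct, and it takes a different route from the paper's.

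\textbf{The paper's route.} The paper exhibits the explicit element $\boldsymbol{\delta}=\vec u/\norm{\vec u}^2-\vec w_0$. It checks that $\boldsymbol{\delta}$ lies in $\Lambda_{\vec u}^*$ and, via B\'ezout, has order exactly $\norm{\vec u}^2$ modulo $\Lambda_{\vec u}$. It then computes $\operatorname{covol}(\Lambda_{\vec u})=\norm{\vec u}$ and uses the index formula $|\Lambda^*/\Lambda|=\operatorname{covol}(\Lambda)^2$. Cyclicity follows by counting: the group has order $\norm{\vec u}^2$ and contains an element of that order.

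\textbf{Your route.} You build an explicit isomorphism instead:
\[
\Lambda_{\vec u}^*=\pi(\mathbb Z^m),\qquad
\Lambda_{\vec u}^*/\Lambda_{\vec u}\cong\mathbb Z^m/(\Lambda_{\vec u}\oplus\mathbb Z\vec u)\cong\mathbb Z/\norm{\vec u}^2\mathbb Z.
\]
You never need the index formula or a volume computation, and cyclicity comes out structurally rather than from an order count.

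\textbf{Comparison.} Both arguments single out the same generator. The class your isomorphism sends to $1$ is $\pi(\vec w)=\vec w-\vec u/\norm{\vec u}^2=-\boldsymbol{\delta}$. The paper's version is shorter, given standard lattice facts. It also ties the generator directly to the CVP targets $m\boldsymbol{\delta}$ of Subsection~\ref{sec:cvpalg}, since a level's target class depends only on $m$ modulo $\norm{\vec c}^2$. Yours is self-contained and more conceptual.

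\textbf{A correction to your closing remark.} Step 1 does not actually need primitivity. $\mathbb Z^m\cap\vec u^\perp$ is saturated for every nonzero integer $\vec u$; your own torsion-freeness argument never uses $\gcd_i u_i=1$, and the basis still extends, just with $\vec u^\top\vec w=\gcd_i u_i$. So $\Lambda_{\vec u}^*=\pi(\mathbb Z^m)$ holds in general. Primitivity genuinely enters in two places. In Step 2 it forces $t\in\mathbb Z$. In Step 3 it gives surjectivity of $\vec x\mapsto\vec u^\top\vec x$. This is consistent with the fact that for $\vec u=g\vec u'$ with $\vec u'$ primitive, the quotient is $\mathbb Z/\norm{\vec u'}^2\mathbb Z$.
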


\begin{proof}
Since $\vec u$ is primitive, choose $\vec w_0\in\mathbb Z^m$ with
$\vec u^\top\vec w_0=1$ as in Section~\ref{sec:cvpalg}, and put
\[
    \boldsymbol{\delta}:=\frac{\vec u}{\|\vec u\|_2^2}-\vec w_0.
\]
Then $\boldsymbol{\delta}\in \vec u^\perp$, and for every $\vec x\in\Lambda_{\vec u}$ we have
$\boldsymbol{\delta}^\top\vec x=-\vec w_0^\top\vec x\in\mathbb Z$.  Thus
$\boldsymbol{\delta}\in\Lambda_{\vec u}^*$.  Moreover,
$\|\vec u\|_2^2\boldsymbol{\delta}=\vec u-\|\vec u\|_2^2\vec w_0\in\Lambda_{\vec u}$.

If $s\boldsymbol{\delta}\in\Lambda_{\vec u}$, then $s\boldsymbol{\delta}$ is integral, so
$s\vec u/\|\vec u\|_2^2$ is integral.  Hence $\|\vec u\|_2^2$ divides $s u_i$ for every
coordinate $i$. Since $\vec u$ is primitive, there are integers $\lambda_i$ with
$\sum_i\lambda_i u_i=1$ (B\'ezout's identity). Taking the same integer
linear combination of these divisibilities gives $\|\vec u\|_2^2\mid s$.
Therefore the class of $\boldsymbol{\delta}$ has order exactly
$\|\vec u\|_2^2$.

Finally, $\operatorname{covol}(\Lambda_{\vec u})=\|\vec u\|_2$.  Indeed, if
$\vec v_1,\ldots,\vec v_{m-1}$ is a basis of $\Lambda_{\vec u}$, then
$\vec v_1,\ldots,\vec v_{m-1},\vec w_0$ is a basis of $\mathbb Z^m$; its volume is $1$, while
the perpendicular height of $\vec w_0$ above $\vec u^\perp$ is
$|\vec u^\top\vec w_0|/\|\vec u\|_2=1/\|\vec u\|_2$.  Thus the covolume is $\|\vec u\|_2$.
Consequently
$|\Lambda_{\vec u}^*/\Lambda_{\vec u}|=\operatorname{covol}(\Lambda_{\vec u})^2=\|\vec u\|_2^2$.
Since the group already contains an element of this order, it is cyclic.
\end{proof}

For example, a lattice with Gram matrix $2I_k$ (the matrix of pairwise inner
products of a basis) has discriminant group $(\mathbb Z/2\mathbb Z)^k$,
which is noncyclic for $k\ge2$. It therefore
cannot be isometric to $\mathbb Z^m\cap \vec u^\perp$ for any primitive
$\vec u\in\mathbb Z^m$.

This obstruction concerns a global isometric identification of the entire
lattice on an objective level. It therefore does not prevent a bounded
encoding of a CVP instance inside one Euclidean ball. The reduction in the
next subsection uses exactly this distinction: within a bounded region, a
single hyperplane equation cuts out the lifted CVP witnesses, without
requiring the whole hyperplane lattice to be isometric to the source lattice.


\subsection{A reduction from Euclidean CVP to \BallIP}\label{sec:cvp-to-ballip}

We give a reduction from Euclidean CVP to the decision version
of \BallIP. As specified in Section~\ref{sec:notation},
\BallIP{} here is the unrestricted formulation
$\vec x\in\Z^n$ so we do not assume nonnegativity of the variables.

We use the standard integral formulation of Euclidean CVP and assume without
loss of generality that
\[
    B\in\mathbb Z^{d\times k},\qquad \vec t\in\mathbb Z^d,
    \qquad \beta\in\mathbb Z_{\ge 0},
\]
where $B$ has column rank $k$. If $k=0$, the source instance is decided
directly by testing $\|\vec t\|_2^2\le\beta$, so assume $1\le k\le d$. The source
instance asks whether there is $\vec z\in\mathbb Z^k$ such that
$\|B\vec z-\vec t\|_2^2\le\beta$. In particular, $\|B\vec z-\vec t\|_2^2$ is an integer for every
$\vec z\in\mathbb Z^k$. Our construction produces one \BallIP{} instance in dimension
$d+k$.

The construction has four stages. First, we bound the size of any CVP witness
that satisfies the radius constraint. Second, we lift both the error vector
$B\vec z-\vec t$ and the coefficient vector $\vec z$ into a Euclidean ball so that the
integrality of $\norm{B\vec z-\vec t}^2$ creates a one-unit gap. Third, inside this
bounded ball, we combine the lifted graph equations into one integer equation
using a positional encoding. Finally, we translate the resulting hyperplane
section and enlarge the radius so that a one-sided \BallIP{} objective threshold
selects exactly that integer objective level.

\subsubsection{A bounded lift}

We first bound the coefficient vector $\vec z$ of any CVP witness satisfying
the radius bound. Set
\begin{equation}\label{eq:cvp-witness-data}
    M:=\max\Bigl\{1,\max_{i,j}|B_{ij}|\Bigr\},\qquad
    Z:=k\Bigl(k!M^{k-1}(\|\vec t\|_\infty+\beta+1)\Bigr)^2.
\end{equation}

\begin{lemma}\label{lem:cvp-witness-bound}
For an integral CVP instance with $B$ of column rank $k\ge1$, let $Z$ be
as in \eqref{eq:cvp-witness-data}. If $\vec z\in\mathbb Z^k$ satisfies
$\|B\vec z-\vec t\|_2^2\le\beta$, then $\|\vec z\|_2^2\le Z$.
\end{lemma}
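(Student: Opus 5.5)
The plan is to recover $\vec z$ from $B\vec z$ by solving a nonsingular $k\times k$ subsystem with Cramer's rule, and then bound the pieces using the witness condition. Since $B$ has column rank $k$, some $k\times k$ submatrix $B_I$, formed by a row set $I\subseteq[d]$ with $|I|=k$, is nonsingular. Its determinant is a nonzero integer, so $|\det B_I|\ge1$. Set $\vec y:=B\vec z$ and let $\vec y_I$ be its restriction to the rows in $I$. Then $B_I\vec z=\vec y_I$, and Cramer's rule gives
\[
  z_j=\frac{\det B_I^{(j)}}{\det B_I},
\]
where $B_I^{(j)}$ is $B_I$ with column $j$ replaced by $\vec y_I$.

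Next I bound the entries of $\vec y$ using the witness hypothesis. From $\|B\vec z-\vec t\|_2^2\le\beta$ we get $|y_i-t_i|\le\sqrt\beta\le\beta+1$ for each $i$, since $\beta$ is a nonnegative integer. Hence
\[
  |y_i|\le\|\vec t\|_\infty+\beta+1 .
\]
To bound $\det B_I^{(j)}$, I use the Leibniz expansion. It has $k!$ terms, and each term is a product of $k-1$ entries of $B$, each at most $M$ in absolute value, and exactly one entry of $\vec y_I$. This gives
\[
  |\det B_I^{(j)}|\le k!\,M^{k-1}(\|\vec t\|_\infty+\beta+1).
\]
Dividing by $|\det B_I|\ge1$ bounds every $|z_j|$ by the same quantity. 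Squaring and summing over the $k$ coordinates gives $\|\vec z\|_2^2\le Z$, as defined in \eqref{eq:cvp-witness-data}.

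There is no serious obstacle here. The only care needed is the choice $M\ge1$, which makes the bound $M^{k-1}$ valid also when $k=1$, and the observation that the determinant bound does not depend on which row set $I$ is chosen.
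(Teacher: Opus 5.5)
Your proof is correct and follows essentially the same route as the paper. Both arguments invert a nonsingular $k\times k$ row-submatrix and use $|\det|\ge1$. Your Cramer's-rule/Leibniz bound on $\det B_I^{(j)}$ is the paper's adjugate-entry bound $(k-1)!M^{k-1}$ summed over $k$ terms, and both arrive at the same coordinatewise bound $k!M^{k-1}(\|\vec t\|_\infty+\beta+1)$.
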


\begin{proof}
Choose $k$ linearly independent rows of $B$, and let
$A\in\mathbb Z^{k\times k}$ be the resulting nonsingular submatrix.  If $I$
denotes the selected rows, then
$A\vec z=\vec t_I+(B\vec z-\vec t)_I$.  Since $\|B\vec z-\vec t\|_2\le\sqrt\beta$, every coordinate of the
right-hand side has magnitude at most $\|\vec t\|_\infty+\sqrt\beta$.

Because $A$ is integral and nonsingular, $|\det A|\ge 1$.  Moreover, every
entry of $\operatorname{adj}(A)$ is a cofactor of $A$ and hence has magnitude
at most $(k-1)!M^{k-1}$.  Thus, from
$\vec z=A^{-1}(\vec t_I+(B\vec z-\vec t)_I)$, each coordinate satisfies
\[
\begin{aligned}
    |z_i|
    &\le \frac{1}{|\det A|}
       \sum_{j=1}^k |\operatorname{adj}(A)_{ij}|
       |(\vec t_I+(B\vec z-\vec t)_I)_j| \\
    &\le k!M^{k-1}(\|\vec t\|_\infty+\sqrt\beta)
     \le k!M^{k-1}(\|\vec t\|_\infty+\beta+1).
\end{aligned}
\]
Squaring and summing over $i$ gives $\|\vec z\|_2^2\le Z$.
\end{proof}

For $\vec z\in\mathbb Z^k$, define
\begin{equation}\label{eq:cvp-lift-data}
    S:=Z+1,\qquad R_0^2:=S^2\beta+Z,\qquad
    \vec X_{\vec z}:=\binom{S(B\vec z-\vec t)}{\vec z}\in\mathbb Z^{d+k}.
\end{equation}
We choose $S=Z+1$ so that the entire $\vec z$-block has squared norm at most
$Z$, while increasing the integral quantity $\|B\vec z-\vec t\|_2^2$ by one increases
the squared norm of the first block by $S^2>Z$.  Consequently, every CVP witness satisfies
$\|\vec X_{\vec z}\|_2^2\le R_0^2$.  Conversely, if $\|\vec X_{\vec z}\|_2^2\le R_0^2$, then
\[
    \|B\vec z-\vec t\|_2^2\le \beta+\frac{Z}{S^2}<\beta+1,
\]
and integrality forces $\|B\vec z-\vec t\|_2^2\le\beta$.

It remains to recognize the points of the form $\vec X_{\vec z}$ by a single linear
equation.
Write a general $\vec X\in\mathbb Z^{d+k}$ as $\vec X=(\vec Y,\vec z)$ with
$\vec Y\in\mathbb Z^d$.  The desired points are exactly those satisfying
$\vec Y=S(B\vec z-\vec t)$, that is, $d$ integer equations.  We combine these equations using
distinct powers of a sufficiently large base $Q$.

Choose
\begin{equation}\label{eq:cvp-encoding-data}
\begin{aligned}
    Q&:=(R_0^2+1)(1+SkM)+S\|\vec t\|_\infty+2,
    &\vec a&:=(1,Q,\ldots,Q^{d-1})^\top,\\
    \vec c&:=\binom{\vec a}{-SB^\top\vec a}\in\mathbb Z^{d+k},
    &g&:=-S\vec a^\top\vec t.
\end{aligned}
\end{equation}
Since the first coordinate of $\vec a$, and hence of $\vec c$, is $1$, the vector $\vec c$
is primitive.

\begin{proposition}[CVP as a slice of a ball]\label{prop:bounded-hyperplane-short}
With the parameters in \eqref{eq:cvp-witness-data}--\eqref{eq:cvp-encoding-data},
for every $\vec X=(\vec Y,\vec z)\in\mathbb Z^{d+k}$ with
$\|\vec X\|_2^2\le R_0^2$,
\begin{equation}
    \vec c^\top\vec X=g
    \quad\Longleftrightarrow\quad
    \vec Y=S(B\vec z-\vec t).
    \label{eq:bounded-hyperplane-equiv}
\end{equation}
Hence
\begin{equation}\label{eq:cvp-hyperplane}
\exists \vec z\in\mathbb Z^k:\ \|B\vec z-\vec t\|_2^2\le\beta
\quad\Longleftrightarrow\quad
\exists \vec X\in\mathbb Z^{d+k}:\
\|\vec X\|_2^2\le R_0^2,\ \vec c^\top\vec X=g.
\end{equation}
\end{proposition}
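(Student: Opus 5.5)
The plan is to rewrite the hyperplane equation in terms of the residual vector $\vec w:=\vec Y-S(B\vec z-\vec t)\in\mathbb Z^d$. Expanding the definitions in \eqref{eq:cvp-encoding-data},
\[
  \vec c^\top\vec X-g=\vec a^\top\vec Y-S\vec a^\top B\vec z+S\vec a^\top\vec t=\vec a^\top\vec w=\sum_{j=1}^d w_jQ^{j-1}.
\]
So $\vec c^\top\vec X=g$ holds exactly when $\sum_j w_jQ^{j-1}=0$. The reverse direction of \eqref{eq:bounded-hyperplane-equiv} is then immediate, since $\vec w=\vec 0$ gives value $0$.

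For the forward direction, the key step is a uniform bound on $|w_j|$ under $\|\vec X\|_2^2\le R_0^2$. Each coordinate satisfies $|Y_j|\le R_0\le R_0^2+1$ and $|z_i|\le R_0$. Hence
\[
  |(SB\vec z)_j|\le SkMR_0,\qquad |S t_j|\le S\|\vec t\|_\infty,
\]
and so
\[
  |w_j|\le R_0(1+SkM)+S\|\vec t\|_\infty<(R_0^2+1)(1+SkM)+S\|\vec t\|_\infty+2=Q,
\]
with margin to spare. With this bound I would show that $\vec w=\vec 0$ is the only integer vector with $|w_j|\le (Q-1)/2$ (or even $|w_j|<Q$) and $\sum_j w_jQ^{j-1}=0$. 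The argument is the standard digit one. Reducing modulo $Q$ gives $w_1\equiv 0\pmod Q$, and $|w_1|<Q$ forces $w_1=0$. Dividing by $Q$ and inducting on $j$ gives $w_j=0$ for every $j$. For this induction only $|w_j|<Q$ is needed, so the generous choice of $Q$ suffices. Hence $\vec Y=S(B\vec z-\vec t)$.

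Finally, \eqref{eq:cvp-hyperplane} follows by combining this with the bounded-lift discussion after \eqref{eq:cvp-lift-data}. For ($\Rightarrow$), take a witness $\vec z$ and set $\vec X=\vec X_{\vec z}$. Lemma~\ref{lem:cvp-witness-bound} gives $\|\vec z\|_2^2\le Z$, so $\|\vec X\|_2^2\le S^2\beta+Z=R_0^2$, and $\vec c^\top\vec X=g$ because its residual is $\vec w=\vec 0$. For ($\Leftarrow$), a point $\vec X$ in the ball on the hyperplane has the form $\vec X_{\vec z}$ by \eqref{eq:bounded-hyperplane-equiv}. Then $S^2\|B\vec z-\vec t\|_2^2\le R_0^2$ gives $\|B\vec z-\vec t\|_2^2\le\beta+Z/S^2<\beta+1$, and integrality yields $\|B\vec z-\vec t\|_2^2\le\beta$.

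The main obstacle is only the bookkeeping in the coefficient bound on $\vec w$, namely making sure $R_0$ rather than $R_0^2$ appears where needed and that the bound stays strictly below $Q$. The base-$Q$ uniqueness argument itself is routine.
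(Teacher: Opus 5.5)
Your proposal is correct and follows the paper's proof essentially step for step: the same residual vector (the paper's $\vec e$), the same identity $\vec c^\top\vec X-g=\vec a^\top\vec e$, the same coordinatewise bound strictly below $Q$, and the same use of Lemma~\ref{lem:cvp-witness-bound} together with $S=Z+1$ and integrality for the second equivalence. The only cosmetic difference is the base-$Q$ uniqueness step. You argue bottom-up by reducing modulo $Q$, which needs only $|w_j|<Q$. The paper argues top-down from the largest nonzero index, using $|e_j|\le Q-2$. Both are valid.
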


\begin{proof}
Let $\vec e:=\vec Y-SB\vec z+S\vec t\in\mathbb Z^d$.  By construction,
$\vec c^\top\vec X-g=\vec a^\top\vec e$.  If $\|\vec X\|_2^2\le R_0^2$, then every coordinate of
$\vec Y$ and $\vec z$ has magnitude at most $R_0<R_0^2+1$.  Hence, for each $j\in[d]$,
\[
    |e_j|\le |Y_j|+S\sum_{i=1}^k |B_{ji}|\,|z_i|+S|t_j|
    \le (R_0^2+1)(1+SkM)+S\|\vec t\|_\infty=Q-2.
\]

Suppose $\vec a^\top\vec e=0$ but $\vec e\ne\vec0$, and let $h$ be the largest index with
$e_h\ne 0$.  The term $e_hQ^{h-1}$ has magnitude at least $Q^{h-1}$, whereas
the sum of all lower-order terms has magnitude at most
$(Q-2)(Q^{h-1}-1)/(Q-1)<Q^{h-1}$, a contradiction.  Thus $\vec a^\top\vec e=0$ if and only if
$\vec e=\vec0$, proving \eqref{eq:bounded-hyperplane-equiv}.

For \eqref{eq:cvp-hyperplane}, a CVP witness has $\|\vec z\|_2^2\le Z$ by
Lemma~\ref{lem:cvp-witness-bound}, hence its lift $\vec X_{\vec z}$ lies in the ball and on
the hyperplane.  Conversely, any integer point in the ball on the hyperplane
is $\vec X_{\vec z}$ by \eqref{eq:bounded-hyperplane-equiv}, and the choice $S=Z+1$ then
gives $\|B\vec z-\vec t\|_2^2\le\beta$.
\end{proof}

\subsubsection{From a hyperplane equality to a \BallIP{} threshold}
At this point the CVP instance has already been encoded exactly as the bounded
slice

$$
    H\cap R_0\mathbb B^{d+k},
    \qquad
    H:=\{\vec X:\vec c^\top\vec X=g\}.
$$

The only remaining issue is that \BallIP{} uses a one-sided objective threshold,
not an equality. Set
\begin{equation}
q:=R_0^2+2|g|+1,
\qquad
\Gamma:=g+q|\vec c|_2^2,
\qquad
r^2:=R_0^2+2qg+q^2|\vec c|_2^2.
\label{eq:q-gamma-r}
\end{equation}
We translate this slice in the normal direction of $H$ by making the
affine change of coordinates

$$
    \vec W=\vec X+q\vec c.
$$

In the $\vec W$-coordinates, the hyperplane becomes
$\vec c^\top\vec W=\Gamma$, and the original ball becomes the ball of radius
$R_0$ centered at $q\vec c$. We then replace this shifted ball by the
origin-centered ball $r\mathbb B^{d+k}$. The two balls are different, but they
cut the translated hyperplane in the same slice. Thus they contain exactly
the same points with $\vec c^\top\vec W=\Gamma$. The choice of $q$ ensures
that no point with $\vec c^\top\vec W\ge\Gamma+1$ lies in
$r\mathbb B^{d+k}$.

Figure~\ref{fig:translate} illustrates the construction. Translating by $q\vec c$ preserves the section at $\vec c^\top\vec W=\Gamma$, and the larger origin-centered ball contains has the same intersection with the translated hyperplane while excluding the larger integer objective levels, beginning with $\Gamma+1$. Real values strictly between $\Gamma$ and $\Gamma+1$ are irrelevant because $\vec c^\top\vec W$ is integral for integer $\vec W$.

\begin{figure}[t]
\centering
\begin{tikzpicture}[>=latex, line join=round, line cap=round, scale=1.05]
\pgfmathsetmacro{\RO}{1.0}
\pgfmathsetmacro{\qlen}{2.8}
\pgfmathsetmacro{\angc}{45}
\pgfmathsetmacro{\tgam}{2.2}
\pgfmathsetmacro{\tgamone}{2.45}
\pgfmathsetmacro{\half}{sqrt(\RO*\RO-(\tgam-\qlen)*(\tgam-\qlen))}
\pgfmathsetmacro{\rnew}{sqrt(\tgam*\tgam+\half*\half)}
\coordinate (O) at (0,0);
\coordinate (C) at (\angc:\qlen);
\coordinate (M) at (\angc:\tgam);
\coordinate (Mone) at (\angc:\tgamone);
\coordinate (P) at ($(M)+(\angc+90:\half)$);
\coordinate (Q) at ($(M)+(\angc-90:\half)$);
\fill[gray!15] (C) circle (\RO);
\draw[thin] (C) circle (\RO);
\draw[dashed, thin] (O) circle (\RO);
\draw[semithick] (O) circle (\rnew);
\draw[->, thin] (-\rnew-0.6,0) -- (\qlen+\RO+0.7,0) node[below] {$W_1$};
\draw[->, thin] (0,-\rnew-0.6) -- (0,\qlen+\RO+0.7) node[left] {$W_2$};
\draw[->, thick] (O) -- (C);
\draw[thin] ($(M)+(\angc+90:2.3)$) -- ($(M)+(\angc-90:2.3)$);
\node[font=\small, anchor=south east] at ($(M)+(\angc+90:2.3)$) {$\vec c^\top\vec W=\Gamma$};
\draw[densely dashed, thin] ($(Mone)+(\angc+90:2.3)$) -- ($(Mone)+(\angc-90:2.3)$);
\node[font=\small, anchor=south west] at ($(Mone)+(\angc+90:2.0)$) {$\vec c^\top\vec W=\Gamma+1$};
\draw[line width=2pt] (P) -- (Q);
\fill (P) circle (1.6pt); \fill (Q) circle (1.6pt);
\draw[densely dotted] (O) -- (P) node[pos=0.5, above left=-1pt, font=\small] {$r$};
\draw[densely dotted] (C) -- ($(C)+(\angc+135:\RO)$) node[pos=0.55, above=-1pt, font=\small] {$R_0$};
\draw[densely dotted] (O) -- (\angc+200:\RO) node[pos=0.6, left=-1pt, font=\small] {$R_0$};
\fill (O) circle (1.6pt); \node[below left=-1pt] at (O) {$\vec 0$};
\fill (C) circle (1.6pt); \node[right=2pt, font=\small] at (C) {$q\vec c$};
\node[font=\small, anchor=west] at ($(C)+(0.75,0.95)$) {$B(q\vec c,R_0)$};
\node[font=\small, anchor=north east] at (225:\rnew+0.05) {$B(\vec 0,r)$};
\node[font=\small, anchor=north east] at (-0.2,-\RO+0.05) {$B(\vec 0,R_0)$};
\node[font=\small, anchor=west] at ($(M)+(\angc-90:1.05)+(0.25,-0.35)$) {common slice};
\end{tikzpicture}
\caption{The translation step that replaces the hyperplane equality by a \BallIP{} threshold. The CVP instance is represented by the slice of $B(\vec 0,R_0)$ at $\vec c^\top\vec X=g$. Translating by $q\vec c$ moves this slice to the level $\vec c^\top\vec W=\Gamma$, where the translated ball and the new origin-centered ball $B(\vec 0,r)$ have the same intersection. The new ball is chosen to exclude the next integer level $\Gamma+1$, so within $B(\vec 0,r)$ the threshold $\vec c^\top\vec W\ge\Gamma$ isolates exactly the desired slice. The figure is schematic, with the spacing between $\Gamma$ and $\Gamma+1$ exaggerated.}
\label{fig:translate}
\end{figure}

Since the first coordinate of $\vec c$ is $1$, we have $\|\vec c\|_2^2\ge 1$, so
$\Gamma\ge -|g|+q>0$.  Moreover,
\[
    r^2\ge R_0^2-2q|g|+q^2
    =R_0^2+q(q-2|g|)>0,
\]
where the last inequality uses $q=R_0^2+2|g|+1$.

\begin{lemma}[Replacing equality by a threshold]\label{lem:level-isolation-short}
With $q,\Gamma,r^2$ as in \eqref{eq:q-gamma-r}, translation by $q\vec c$
gives a bijection between the integer points on
$\vec c^\top\vec X=g$ and those on $\vec c^\top\vec W=\Gamma$.  Moreover,
\begin{equation}
\{\vec W:\vec c^\top\vec W=\Gamma,\ \|\vec W-q\vec c\|_2^2\le R_0^2\}
=
\{\vec W:\vec c^\top\vec W=\Gamma,\ \|\vec W\|_2^2\le r^2\}.
\label{eq:same-section}
\end{equation}
Finally, no real point with $\vec c^\top\vec W\ge\Gamma+1$ lies in
$B(0,r)$.
\end{lemma}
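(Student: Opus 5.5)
The plan is to prove the three claims of Lemma~\ref{lem:level-isolation-short} by direct expansion of squared norms, in the order stated.

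\emph{Bijection.} The map $\vec X\mapsto\vec X+q\vec c$ is a bijection of $\Z^{d+k}$ onto itself, since $q\in\Z$ and $\vec c\in\Z^{d+k}$. It also shifts the objective by a constant: $\vec c^\top(\vec X+q\vec c)=\vec c^\top\vec X+q\norm{\vec c}^2$. Hence $\vec c^\top\vec X=g$ holds exactly when $\vec c^\top\vec W=g+q\norm{\vec c}^2=\Gamma$, which gives the first claim.

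\emph{Equality of sections.} For any real $\vec W$ expand
\[
  \norm{\vec W-q\vec c}^2=\norm{\vec W}^2-2q\,\vec c^\top\vec W+q^2\norm{\vec c}^2 .
\]
On the hyperplane $\vec c^\top\vec W=\Gamma$ the middle term is fixed, so
\[
  \norm{\vec W-q\vec c}^2=\norm{\vec W}^2-2q\Gamma+q^2\norm{\vec c}^2=\norm{\vec W}^2-2qg-q^2\norm{\vec c}^2 .
\]
The condition $\norm{\vec W-q\vec c}^2\le R_0^2$ is therefore equivalent to $\norm{\vec W}^2\le R_0^2+2qg+q^2\norm{\vec c}^2=r^2$, which proves \eqref{eq:same-section}. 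This is the step where the definition of $r^2$ in \eqref{eq:q-gamma-r} is used, and it is purely algebraic.

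\emph{Exclusion of higher levels.} This is the only step needing an estimate. For real $\vec W$ with $\vec c^\top\vec W\ge\Gamma+1$, Cauchy--Schwarz gives $\norm{\vec W}\ge(\Gamma+1)/\norm{\vec c}$. Since $\Gamma+1>0$, it suffices to show $(\Gamma+1)^2>r^2\norm{\vec c}^2$. Write $N=\norm{\vec c}^2\ge1$, so $\Gamma+1=g+1+qN$. Expanding,
\[
  (\Gamma+1)^2=(g+1)^2+2qN(g+1)+q^2N^2 ,
  \qquad
  r^2N=R_0^2N+2qgN+q^2N^2 .
\]
The difference is
\[
  (\Gamma+1)^2-r^2N=(g+1)^2+2qN-R_0^2N\ge N(2q-R_0^2)>0 ,
\]
because $2q-R_0^2=R_0^2+4|g|+2>0$. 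The expected obstacle is only keeping track of signs of $g$, and the crude bound $(g+1)^2\ge0$ absorbs this. Hence $\norm{\vec W}>r$, and no such point lies in $B(0,r)$.
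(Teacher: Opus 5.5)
Your proof is correct and follows the paper's argument essentially step for step: the same observation that translation by the integral vector $q\vec c$ preserves $\Z^{d+k}$, the same expansion of $\norm{\vec W-q\vec c}^2$ on the hyperplane, and the same Cauchy--Schwarz bound together with the identity $(\Gamma+1)^2-r^2\norm{\vec c}^2=(g+1)^2+\norm{\vec c}^2(2q-R_0^2)$. You assert $\Gamma+1>0$ without proof; it follows immediately from $\Gamma\ge q-|g|=R_0^2+|g|+1$, which the paper notes just before the lemma.
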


\begin{proof}
The first statement is immediate from
$\vec c^\top(\vec X+q\vec c)=g+q\|\vec c\|_2^2=\Gamma$, and translation by the integral vector
$q\vec c$ preserves the integer lattice.

For the second statement, restrict to $\vec c^\top\vec W=\Gamma$.  Expanding
$\|\vec W-q\vec c\|_2^2$ shows that
\[
    \|\vec W-q\vec c\|_2^2\le R_0^2
    \quad\Longleftrightarrow\quad
    \|\vec W\|_2^2\le R_0^2+2q\Gamma-q^2\|\vec c\|_2^2=r^2,
\]
which is exactly \eqref{eq:same-section}.  Thus the shifted old ball and the
new origin-centered ball cut out precisely the same slice of the translated
hyperplane.

It remains to exclude larger objective values.  If $\vec c^\top\vec W\ge\Gamma+1$,
Cauchy--Schwarz gives $\|\vec W\|_2^2\ge(\Gamma+1)^2/\|\vec c\|_2^2$.  On the other
hand,
\[
    (\Gamma+1)^2-\|\vec c\|_2^2r^2
    =(g+1)^2+\|\vec c\|_2^2(2q-R_0^2)>0,
\]
so $(\Gamma+1)^2/\|\vec c\|_2^2>r^2$.  Hence every such $\vec W$ lies outside the ball.
\end{proof}

Since $\vec c^\top\vec W$ is integral for $\vec W\in\mathbb Z^{d+k}$,
Lemma~\ref{lem:level-isolation-short} implies
that inside $B(0,r)$ the conditions $\vec c^\top\vec W\ge\Gamma$ and
$\vec c^\top\vec W=\Gamma$ are equivalent: any strictly larger value is at least
$\Gamma+1$ and therefore lies outside the ball.  Combining this with
Proposition~\ref{prop:bounded-hyperplane-short} gives the reduction.

\begin{theorem}\label{thm:main}
Given a Euclidean CVP instance
$B\in\mathbb Z^{d\times k}$, $\vec t\in\mathbb Z^d$, and
$\beta\in\mathbb Z_{\ge 0}$, with $B$ of column rank $k$, one can construct in
deterministic polynomial time an integer vector
$\vec c\in\mathbb Z^{d+k}$ and integers $r^2\in\mathbb Z_{\ge 0}$ and
$\Gamma\in\mathbb Z$ such that
\[
\exists \vec z\in\mathbb Z^k:\ \|B\vec z-\vec t\|_2^2\le\beta
\quad\Longleftrightarrow\quad
\exists \vec W\in\mathbb Z^{d+k}:\
\|\vec W\|_2^2\le r^2,\ \vec c^\top\vec W\ge\Gamma.
\]
All output data have polynomial bit length.  In particular, a full-rank
$n$-dimensional CVP instance maps to a \BallIP{} instance in dimension $2n$.
\end{theorem}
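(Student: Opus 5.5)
The plan is to assemble Theorem~\ref{thm:main} from the two ingredients already proved, namely Proposition~\ref{prop:bounded-hyperplane-short} and Lemma~\ref{lem:level-isolation-short}, and then to verify polynomiality. First, dispose of the trivial case $k=0$ as in the text: decide $\|\vec t\|_2^2\le\beta$ directly and output a fixed yes- or no-instance of \BallIP. For $k\ge1$, compute $M$, $Z$, $S$, $R_0^2$ from \eqref{eq:cvp-witness-data}--\eqref{eq:cvp-lift-data}, then $Q$, $\vec a$, $\vec c$, $g$ from \eqref{eq:cvp-encoding-data}, and finally $q$, $\Gamma$, $r^2$ from \eqref{eq:q-gamma-r}. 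The output instance is $(\vec c,r^2,\Gamma)$ in dimension $d+k$.

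For correctness, I would chain equivalences. By \eqref{eq:cvp-hyperplane}, the CVP instance is a yes-instance if and only if some $\vec X\in\Z^{d+k}$ satisfies $\|\vec X\|_2^2\le R_0^2$ and $\vec c^\top\vec X=g$. Setting $\vec W=\vec X+q\vec c$, Lemma~\ref{lem:level-isolation-short} gives a bijection between integer points. By \eqref{eq:same-section}, this holds if and only if some integer $\vec W$ has $\vec c^\top\vec W=\Gamma$ and $\|\vec W\|_2^2\le r^2$. Finally, I would replace the equality by $\ge$. One direction is trivial. Conversely, an integer $\vec W$ in $B(0,r)$ with $\vec c^\top\vec W\ge\Gamma$ has an integral objective value, so if it is not $\Gamma$ it is at least $\Gamma+1$, which the last clause of the lemma excludes. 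I would also record that $r^2>0$ and is an integer, which was shown just before the lemma.

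The main obstacle, though routine, is the bit-length bound. $Z$ has $O(k\log k+k\log M+\log(\|\vec t\|_\infty+\beta))$ bits, and so do $S$ and $R_0^2$. $Q$ is polynomial in these, and the entries of $\vec a$ are $Q^{j}$ with $j<d$, hence have $O(d\log Q)$ bits. The entries of $-SB^\top\vec a$ and $g$ add only $O(\log(dSM\|\vec t\|_\infty))$ bits. Then $\|\vec c\|_2^2$, $q$, $\Gamma$, $r^2$ are formed by a constant number of additions and multiplications, so all output data have polynomially many bits and are computable deterministically in polynomial time; the exponential-looking $Q^{d-1}$ is harmless because only its bit length $O(d\log Q)$ matters. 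For the last sentence of the theorem, full rank means $k=d=n$, so the dimension is $d+k=2n$.
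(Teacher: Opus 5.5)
Your proposal is correct and matches the paper's proof: you chain Proposition~\ref{prop:bounded-hyperplane-short} with Lemma~\ref{lem:level-isolation-short}, use integrality of $\vec c^\top\vec W$ to turn $\ge\Gamma$ into $=\Gamma$ inside $B(0,r)$, and do the same bit-length bookkeeping, in which the entries $Q^{j}$ of $\vec a$ have $O(d\log Q)$ bits. One small nit: computing $\|\vec c\|_2^2$ takes $d+k$ arithmetic operations rather than a constant number, but this does not affect the polynomial bound.
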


\begin{proof}
Proposition~\ref{prop:bounded-hyperplane-short} and
Lemma~\ref{lem:level-isolation-short} establish the equivalence. It remains to verify that the
reduction runs in polynomial time. Lemma~\ref{lem:cvp-witness-bound} gives $Z$ with polynomial bit
length, hence the same is true of $S$ and $R_0^2$.  $Q$ also has
polynomial bit length.  Although the entries of
$\vec a=(1,Q,\ldots,Q^{d-1})$ may be numerically large, their bit length is at most
$O(d\log Q)$; consequently $\vec c$ and $g$ have polynomial bit length as well.
The same then holds for $q$, $\Gamma$, and $r^2$ by
\eqref{eq:q-gamma-r}.  The construction uses only polynomially many
integer-arithmetic operations, and the dimension is $d+k$ throughout.
\end{proof}

\begin{remark}[Fine-grained consequence]\label{rem:cvp-ballip-hardness}
Aggarwal, Gupta, Morolia, and Zhang~\cite[Theorem~1.1]{AGM25} prove an ETH
lower bound for constant-factor gap-$\mathrm{CVP}_2$: distinguish distance at
most $R$ from distance greater than $\eta R$, for a fixed $\eta>1$.
Their hard lattices in ambient dimension $m$ are full-rank and are given as
the integer combinations of the columns of $[M\;2I_m]$, a generating set
rather than a basis. Integer normal-form algorithms convert it into
a basis in polynomial time~\cite{kannanbachem1979}. Since their squared YES
radius is rational and all squared distances are integers, taking its floor
gives the integral budget required by Theorem~\ref{thm:main}. Exact CVP at
that budget solves the gap instance, and our reduction produces a \BallIP{}
instance in dimension $2m$. Thus exact \BallIP{} in dimension $n$ admits no
$2^{o(n)}\operatorname{poly}(L)$-time algorithm unless ETH fails.

We do not obtain the sharper $2^{(1-\varepsilon)m}$-type lower bounds
associated with the Strong Exponential Time Hypothesis (SETH), which posits
that for every $\varepsilon>0$ there is a clause width $k$ for which $k$-SAT
cannot be solved in $2^{(1-\varepsilon)n}$ time: the quantitative
SETH-hardness result for CVP in~\cite{BGS17} does not cover $p=2$.
\end{remark}


\section{Proximity bounds and the limits of boundary enumeration}\label{sec:geometry}\label{sec:boundary}

The preceding algorithms use two properties of the Euclidean ball:
the constraint $\sum_i x_i^2\le r^2$ is separable across coordinates, and its
intersection with an objective hyperplane is again a Euclidean ball. General
integer programming over convex sets already admits dimension-dependent
algorithms; the randomized bound of Reis and Rothvoss~\cite{reisrothvoss} has
$(\log n)^{O(n)}$ dependence on the dimension, up to polynomial factors. We
therefore ask whether the additional structure of the ball yields a better
exact algorithm for \textsc{BallIP}, with the nonpolynomial factor depending
only on the dimension.

The continuous optimum of \textsc{BallIP} is
$r\vec c/\norm{\vec c}$. A natural idea is to develop an enumeration algorithm
around this point. We bound the coordinates of an optimal integer point
recursively and give an exact search. We then show that explicitly listing
all points of the resulting boundary cap can require radius-dependent work,
already in dimension two; this is not a lower bound for more selective searches.

By the sign-symmetry normalization in Section~\ref{sec:notation}, we may,
without loss of generality throughout this section, take the active costs to
be positive and an optimal solution to satisfy $\vec x\in\Z_{\ge0}^n$.
Accordingly, we work here with \textsc{BallIP}$^+$.
Throughout this section, order the coordinates so that
$c_1\ge\cdots\ge c_n$, and write
$\vec c_{k:n}=(c_k,\ldots,c_n)$ and
$L=\langle\vec c,r^2\rangle$. Rounding the continuous optimum down
coordinatewise gives
\begin{equation}\label{eq:boundary-window}
  r\norm{\vec c}-\norm{\vec c}_1
  \le \OPT^+(\vec c,r^2)
  \le r\norm{\vec c}.
\end{equation}

\subsection{Coordinate bounds}
\label{sec:coordinate-bounds}

Fix $x_1,\ldots,x_{k-1}$ as in an optimal integer solution, and let
$R_k^2=r^2-\sum_{j<k}x_j^2$ be the remaining squared radius. Set
$s_k=\norm{\vec c_{k:n}}$. If $x_k=t\in[0,R_k]$, then the largest possible
contribution of $x_k,\ldots,x_n$ in the continuous relaxation is
$F_k(t)=c_k t+\norm{\vec c_{k+1:n}}\sqrt{R_k^2-t^2}$, where the second term is
zero when $k=n$. This function is maximized at
$t_k^*=c_kR_k/s_k$, with $F_k(t_k^*)=R_ks_k$.

\begin{lemma}[Quadratic loss]\label{lem:profileloss}
If $R_k>0$, then for every $t\in[0,R_k]$,
\[
  F_k(t_k^*)-F_k(t)
  \ge
  \frac{s_k}{2R_k}(t-t_k^*)^2.
\]
When $R_k=0$, the only possible coordinate is $t=t_k^*=0$.
\end{lemma}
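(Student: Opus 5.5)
The plan is to reduce the inequality to a statement about the Euclidean norm by a trigonometric parametrization. Write $a=c_k$ and $b=\norm{\vec c_{k+1:n}}$, so that $s_k^2=a^2+b^2$. Set $t=R_k\cos\theta$ and $\sqrt{R_k^2-t^2}=R_k\sin\theta$ with $\theta\in[0,\pi/2]$. Likewise write $(a,b)=s_k(\cos\phi,\sin\phi)$ with $\phi\in[0,\pi/2]$. Then
\[
  F_k(t)=R_k s_k\cos(\theta-\phi),
  \qquad
  t_k^*=R_k\cos\phi .
\]
The loss becomes
\[
  F_k(t_k^*)-F_k(t)=R_ks_k\bigl(1-\cos(\theta-\phi)\bigr)=2R_ks_k\sin^2\!\bigl(\tfrac{\theta-\phi}2\bigr).
\]
The case $k=n$ (so $b=0$, $\phi=0$) is included in this parametrization.

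Next we compare this loss with $(t-t_k^*)^2=R_k^2(\cos\theta-\cos\phi)^2$. By the sum-to-product identity,
\[
  \cos\theta-\cos\phi=-2\sin\!\bigl(\tfrac{\theta+\phi}2\bigr)\sin\!\bigl(\tfrac{\theta-\phi}2\bigr).
\]
Since $|\sin((\theta+\phi)/2)|\le1$, this gives
\[
  (t-t_k^*)^2\le 4R_k^2\sin^2\!\bigl(\tfrac{\theta-\phi}2\bigr).
\]
Therefore
\[
  \frac{s_k}{2R_k}(t-t_k^*)^2\le 2R_ks_k\sin^2\!\bigl(\tfrac{\theta-\phi}2\bigr)=F_k(t_k^*)-F_k(t),
\]
which is the claim.

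An equivalent geometric view avoids angles. Let $\vec p=(t,\sqrt{R_k^2-t^2})$ and $\vec p^*=(t_k^*,\sqrt{R_k^2-(t_k^*)^2})$; both lie on the circle of radius $R_k$, and $F_k(t)=\langle(a,b),\vec p\rangle$. Because $\vec p^*=R_k(a,b)/s_k$, we get
\[
  F_k(t_k^*)-F_k(t)=\frac{s_k}{R_k}\bigl(R_k^2-\langle\vec p^*,\vec p\rangle\bigr)=\frac{s_k}{2R_k}\norm{\vec p-\vec p^*}^2 .
\]
Since $\norm{\vec p-\vec p^*}^2\ge(t-t_k^*)^2$, the bound follows immediately. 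I would present this vector version because it is shorter.

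The degenerate case $R_k=0$ is trivial: the interval $[0,R_k]$ is $\{0\}$, and $t_k^*=0$. The only point needing care is checking that $t_k^*\in[0,R_k]$ and that $\vec p^*$ is exactly $R_k(a,b)/s_k$. This holds because $\sqrt{R_k^2-(t_k^*)^2}=R_kb/s_k$, using $b\ge0$ and $s_k>0$, which is true since the active costs are positive. I expect no real obstacle.
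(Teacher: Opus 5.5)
Your proposal is correct, and the vector version you plan to present is exactly the paper's proof: both points lie on the circle of radius $R_k$, the loss equals $\frac{s_k}{2R_k}\norm{\vec p-\vec p^*}^2$, and dropping the second coordinate gives the bound. The trigonometric computation is an equivalent reformulation of the same identity. Your check that $\vec p^*=R_k(c_k,\norm{\vec c_{k+1:n}})/s_k$, using $s_k>0$, fills in a step the paper states without justification.
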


\begin{proof}
Assume $R_k>0$. Write $c=c_k$, and let
$a=\norm{\vec c_{k+1:n}}$ when $k<n$ and $a=0$ when $k=n$. Then
$s_k=\sqrt{c^2+a^2}$ and $t_k^*=cR_k/s_k$. Define
\[
  \vec u(t)=\bigl(t,\sqrt{R_k^2-t^2}\bigr),
  \qquad
  \vec u^*=\frac{R_k}{s_k}(c,a).
\]
Both vectors have norm $R_k$, and $\vec u(t_k^*)=\vec u^*$. Moreover,
\[
  F_k(t)=(c,a)^\top\vec u(t)
  =\frac{s_k}{R_k}(\vec u^*)^\top\vec u(t).
\]
Since $F_k(t_k^*)=R_ks_k$, it follows that
\begin{align*}
  F_k(t_k^*)-F_k(t)
  &=\frac{s_k}{R_k}
    \bigl(R_k^2-(\vec u^*)^\top\vec u(t)\bigr)\\
  &=\frac{s_k}{2R_k}\norm{\vec u(t)-\vec u^*}^2\\
  &\ge\frac{s_k}{2R_k}(t-t_k^*)^2.
\end{align*}
The second equality uses
$\norm{\vec u(t)-\vec u^*}^2
=2R_k^2-2(\vec u^*)^\top\vec u(t)$; the final inequality retains only
the first coordinate of the squared norm.
\end{proof}

\begin{theorem}[Coordinate interval bound]\label{thm:coordinterval}
Every optimal solution satisfies
\[
  |x_k-t_k^*|
  \le
  \sqrt{\frac{2R_k\norm{\vec c_{k:n}}_1}{s_k}}
  \le \sqrt2\,\sqrt{R_k}\,(n-k+1)^{1/4}.
\]
\end{theorem}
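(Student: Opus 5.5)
We compare the optimal tail value with an explicit feasible competitor and then apply Lemma~\ref{lem:profileloss}. Fix an optimal solution $\vec x$ and an index $k$. With $x_1,\ldots,x_{k-1}$ fixed, the tail $(x_k,\ldots,x_n)$ must be optimal for the residual problem $\max\{\vec c_{k:n}^\top\vec y:\vec y\in\Z_{\ge0}^{n-k+1},\ \norm{\vec y}^2\le R_k^2\}$. Otherwise we could replace the tail by a better residual solution and improve $\OPT^+$. If $R_k=0$, the lemma gives $x_k=t_k^*=0$ and there is nothing to prove, so we assume $R_k>0$.

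\textbf{Upper bound on the tail value.} Put $t=x_k$. The remaining coordinates satisfy $\sum_{j>k}x_j^2\le R_k^2-t^2$. By Cauchy--Schwarz, $\sum_{j>k}c_jx_j\le\norm{\vec c_{k+1:n}}\sqrt{R_k^2-t^2}$, and hence $\vec c_{k:n}^\top\vec x_{k:n}\le F_k(t)$.

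\textbf{Lower bound on the tail value.} Apply the rounding argument behind \eqref{eq:boundary-window} to the residual problem. Set $\bar y_j=\lfloor R_kc_j/s_k\rfloor$ for $j\ge k$. This point is feasible and nonnegative, and its value is at least $R_ks_k-\norm{\vec c_{k:n}}_1$. By optimality of the tail, $F_k(x_k)\ge R_ks_k-\norm{\vec c_{k:n}}_1=F_k(t_k^*)-\norm{\vec c_{k:n}}_1$.

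\textbf{Combining the bounds.} Lemma~\ref{lem:profileloss} gives $\frac{s_k}{2R_k}(x_k-t_k^*)^2\le F_k(t_k^*)-F_k(x_k)\le\norm{\vec c_{k:n}}_1$. Rearranging yields the first inequality, $|x_k-t_k^*|\le\sqrt{2R_k\norm{\vec c_{k:n}}_1/s_k}$. For the second inequality, use $\norm{\vec c_{k:n}}_1\le\sqrt{n-k+1}\,s_k$, which is Cauchy--Schwarz in dimension $n-k+1$. Then $2R_k\norm{\vec c_{k:n}}_1/s_k\le2R_k\sqrt{n-k+1}$, and taking square roots gives $\sqrt2\sqrt{R_k}(n-k+1)^{1/4}$.

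The main point needing care is the optimal-substructure step: we must check that the tail of a global optimum is optimal for the residual ball. This holds because the objective is separable and the residual constraint is exactly $\norm{\vec y}^2\le R_k^2$. We also need $x_k\in[0,R_k]$, so that $F_k(x_k)$ is defined and Lemma~\ref{lem:profileloss} applies. This follows from nonnegativity and $x_k^2\le R_k^2$. The rest is routine.
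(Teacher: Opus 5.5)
Your proof is correct and follows the paper's argument. It uses the same four steps: optimality of the suffix for the residual ball, the coordinatewise-rounded residual optimizer as a lower bound, Lemma~\ref{lem:profileloss} for the quadratic loss, and $\norm{\vec c_{k:n}}_1\le\sqrt{n-k+1}\,s_k$ for the second inequality. You also make explicit two points the paper leaves implicit: the Cauchy--Schwarz step showing the suffix value is at most $F_k(x_k)$, and the feasibility of the rounded point.
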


\begin{proof}
The suffix of an optimal solution is optimal for the residual problem with
costs $\vec c_{k:n}$ and squared radius $R_k^2$: replacing it by a better
suffix would improve the original solution. Rounding the residual continuous
optimizer down coordinatewise therefore gives a suffix value of at least
$R_ks_k-\norm{\vec c_{k:n}}_1$. Since $F_k(x_k)$ is at least this suffix
value, we have
$F_k(t_k^*)-F_k(x_k)\le\norm{\vec c_{k:n}}_1$.
Lemma~\ref{lem:profileloss} gives the first bound when $R_k>0$; the case
$R_k=0$ is immediate. The second follows from
$\norm{\vec c_{k:n}}_1\le\sqrt{n-k+1}\,\norm{\vec c_{k:n}}$.
\end{proof}

\begin{theorem}[Boundary enumeration]\label{thm:enumeration}
For an absolute constant $C$, \textsc{BallIP}$^+$ can be solved exactly in
\[
  \bigl(C(1+\sqrt r\,n^{1/4})\bigr)^{n-1}\operatorname{poly}(L)
\]
time.
\end{theorem}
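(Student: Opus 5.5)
We organize the search as a depth-first branching over the coordinates $x_1,\dots,x_{n-1}$, in the order $c_1\ge\cdots\ge c_n$ fixed at the start of this section, and we determine the last coordinate greedily. At depth $k$ the values $x_1,\dots,x_{k-1}$ are already fixed. We compute $R_k^2=r^2-\sum_{j<k}x_j^2$, which is an integer, and $t_k^*=c_kR_k/s_k$. We then branch on every integer $x_k\in[0,\lfloor R_k\rfloor]$ with $|x_k-t_k^*|\le\sqrt{2R_k\norm{\vec c_{k:n}}_1/s_k}$. At depth $n$, only the choice $x_n=\lfloor R_n\rfloor$ is relevant, because $c_n>0$. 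Each leaf yields a feasible point. We return the best one found.

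For correctness we apply Theorem~\ref{thm:coordinterval}. Fix an optimal solution $\vec x^\star$. We follow the branch along which $x_1,\dots,x_{k-1}$ agree with $\vec x^\star$. At each depth $k$, the theorem applies to $\vec x^\star$ with exactly this $R_k$, so $x^\star_k$ lies in the enumerated interval. By induction the branch of $\vec x^\star$ is explored. Its leaf has value at least $\vec c^\top\vec x^\star$: the last coordinate is chosen as large as feasibility allows, and positive cost makes this choice no worse. Hence the algorithm is exact.

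For the running time, the number of integers in the interval at depth $k$ is at most $1+2\sqrt2\sqrt{R_k}(n-k+1)^{1/4}\le1+2\sqrt2\sqrt r\,n^{1/4}$, using $R_k\le r$. This is at most $C(1+\sqrt r\,n^{1/4})$ with, say, $C=2\sqrt2+1$. There are $n-1$ branching levels, so the search tree has at most $(C(1+\sqrt r\,n^{1/4}))^{n-1}$ leaves. Each node performs polynomially many arithmetic operations.

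The step I expect to be the main obstacle is the exact arithmetic. The quantities $R_k=\sqrt{R_k^2}$ and $t_k^*$ are irrational, and the interval endpoints involve square roots. We do not need these values exactly. Integer membership can be decided by exact comparisons of squared quantities: for an integer $t$ we test $(t s_k - c_kR_k)^2\le 2R_k\norm{\vec c_{k:n}}_1 s_k$ after isolating the radicals. Alternatively, we can compute integer square roots such as $\lfloor\sqrt{\,\cdot\,}\rfloor$ and enlarge each interval by a constant number of points. This enlargement only changes the constant $C$. Either way, every test runs in $\operatorname{poly}(L)$ time, because all integers involved are bounded by polynomials in $r^2$ and $\vec c$. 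This gives the bound in Theorem~\ref{thm:enumeration}.
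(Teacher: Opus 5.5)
Your proposal is correct and is essentially the paper's proof. Both branch on $x_1,\dots,x_{n-1}$ over the integers allowed by Theorem~\ref{thm:coordinterval}, set $x_n=\lfloor R_n\rfloor$ greedily using $c_n>0$, and bound the search by $n-1$ levels with $O(1+\sqrt r\,n^{1/4})$ choices each.

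The difference is cosmetic. The paper uses the padded window $[a_k-H_k,a_k+H_k+1]$, with $a_k=\lfloor t_k^*\rfloor$ and the constant $3$ in place of $\sqrt2$, so its endpoints come directly from integer-square comparisons; this is your ``alternative'' option. If you instead use your exact squared-comparison test, start from $\lfloor t_k^*\rfloor$ and expand outward until the test fails, rather than scanning all of $[0,\lfloor R_k\rfloor]$, which would cost $O(R_k)$ per node.
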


\begin{proof}
At each feasible prefix $x_1,\ldots,x_{k-1}$, compute $R_k$ and $t_k^*$,
and set $a_k=\lfloor t_k^*\rfloor$ and
$H_k=\lceil3\sqrt{R_k\norm{\vec c_{k:n}}_1/s_k}\rceil$.
For $k<n$, enumerate the integers in
$[a_k-H_k,a_k+H_k+1]\cap[0,\lfloor R_k\rfloor]$.
Since $3>\sqrt2$, Theorem~\ref{thm:coordinterval} guarantees that every
prefix of an optimal solution survives this search; no optimality assumption
is needed for the other prefixes. Once $x_1,\ldots,x_{n-1}$ are fixed,
take $x_n=\lfloor\sqrt{r^2-\sum_{k<n}x_k^2}\rfloor$, which is best because
$c_n>0$. Return the best resulting point.

There are at most $2H_k+2\le6\sqrt r\,n^{1/4}+4$ choices at each of the
first $n-1$ coordinates, so there are at most
$\bigl(C(1+\sqrt r\,n^{1/4})\bigr)^{n-1}$ complete candidates.
All interval endpoints can be computed exactly in polynomial bit time:
$a_k$ is found by comparing integer squares, and $H_k$ is the least
nonnegative integer satisfying
$H_k^4s_k^2\ge81R_k^2\norm{\vec c_{k:n}}_1^2$.
The search depth and all other arithmetic contribute only a polynomial factor.
\end{proof}

\begin{remark}[Comparison with enumerating the integer hull]\label{rem:vertexcount}
Let $P_r=\operatorname{conv}(\Z^n\cap r\mathbb B^n)$. A linear objective
has an optimum at a vertex of $P_r$, so enumerating all its vertices and
comparing their objective values is one exact approach. For fixed $n\ge2$,
B\'ar\'any and Larman~\cite[Theorem~1]{baranylarman} show that $P_r$ has
$\Theta_n(r^{\,n(n-1)/(n+1)})$ vertices. Any method that explicitly visits
every vertex must therefore visit that many points. In contrast, the proof of
Theorem~\ref{thm:enumeration} evaluates at most $O_n(r^{(n-1)/2})$ complete
candidates, and $(n-1)/2<n(n-1)/(n+1)$ for every $n\ge2$. Thus its
candidate count grows more slowly with $r$ than the full vertex count, already
in the plane ($r^{1/2}$ versus $r^{2/3}$). This compares point counts with
full vertex enumeration; it is not a lower bound for methods that select
vertices using the objective or use an implicit representation.
\end{remark}

\subsection{Radius dependence of explicit cap enumeration}\label{sec:cap-size}

Equation~\eqref{eq:boundary-window} bounds the objective loss from the
continuous optimum by $\norm{\vec c}_1$, but this does not bound the Euclidean
distance to the continuous optimum independently of $r$. Rotate the
coordinates so that $\vec c/\norm{\vec c}$ is the last coordinate direction
and write an optimal point as $\vec y=(\bar{\vec y},y_n)$. Set
$\delta=\norm{\vec c}_1/\norm{\vec c}$. Since the objective is nonnegative,
$y_n\ge\max\{0,r-\delta\}$, and hence
\[
  \norm{\bar{\vec y}}^2
  \le r^2-\max\{0,r-\delta\}^2
  \le 2r\delta.
\]
Thus an objective loss of at most $\norm{\vec c}_1$ can still permit an
orthogonal displacement of order $\sqrt r$.

This growth occurs already for $n=2$ and $\vec c=(1,1)$. For integers
$m\ge4$, set $r_m^2=m(m+1)/2$. Since $m<\sqrt2\,r_m<m+1$, every feasible
integer point satisfies $x_1+x_2\le m$. On the level $x_1+x_2=m$, write
$x_1=(m+d)/2$ and $x_2=(m-d)/2$. These coordinates are nonnegative integers
exactly when $d\equiv m\pmod2$ and $|d|\le m$, and the ball constraint is
\[
  x_1^2+x_2^2=\frac{m^2+d^2}{2}\le r_m^2
  \quad\Longleftrightarrow\quad d^2\le m.
\]
There are $\Theta(\sqrt m)=\Theta(\sqrt{r_m})$ such points, all optimal and
in the boundary cap from~\eqref{eq:boundary-window}. Yet the balanced point
$(\lfloor m/2\rfloor,\lceil m/2\rceil)$ is also optimal and lies within
distance $1$ of the continuous optimizer.

All these optimal points lie on the single edge of $P_{r_m}$ supported by
$x_1+x_2=m$; only its two endpoints are vertices. The example therefore
bounds the work needed to list every point of the cap, or every optimal point,
not the work needed to find one optimum. It gives no lower bound for selecting
only vertices, pruning the search, or representing the cap implicitly.

\section{Discussion: Comparison with ellipsoids}\label{sec:ellipsoid-discussion}

Consider
\begin{equation}\label{eq:ellip}
  \textsc{EllipsoidIP}:\qquad
  \max\bigl\{\vec f^\top\vec z:\ \norm{B\vec z-\vec t}^2\le\beta,\ \vec z\in\Z^k\bigr\},
\end{equation}
where $B\in\Z^{d\times k}$ has column rank $k$, $\vec t\in\Z^d$,
$\beta\in\Z_{\ge0}$, and $\vec f\in\Z^k\setminus\{\vec0\}$.
Divide $\vec f$ by the gcd of its entries and rescale objective values, so
that $\vec f$ is primitive. \BallIP{} is the special case $B=I_k$,
$\vec t=\vec0$, and $\beta=r^2$. We discuss which of the results for \BallIP{} hold for ellipsoids.

\paragraph{Fixed objective levels remain closest-vector problems.}
For $k\ge2$, let $K_{\vec f}\in\Z^{k\times(k-1)}$ be a lattice basis of
$\Lambda_{\vec f}=\{\vec y\in\Z^k:\vec f^\top\vec y=0\}$.
Choose $\vec z_j\in\Z^k$ with $\vec f^\top\vec z_j=j$.
Every integer point on level $j$ has the form
$\vec z_j+K_{\vec f}\vec y$, with $\vec y\in\Z^{k-1}$. Thus level $j$ is
feasible exactly when
\[
  \operatorname{dist}\bigl(\vec t-B\vec z_j,
     BK_{\vec f}\Z^{k-1}\bigr)^2\le\beta.
\]
Only the target changes with $j$: the lattice is fixed, and its CVP
preprocessing can be reused. By Section~\ref{sec:cvp-to-ballip}, each query
also reduces to one \BallIP{} decision instance of dimension at most
$d+k-1$. For $k=1$, each level contains a single integer point and is
checked directly.

\paragraph{The dynamic programs require additional structure.}
Both dynamic programs in Section~\ref{sec:exact} use coordinatewise
separability. For general $B$, cross terms in $\norm{B\vec z-\vec t}^2$
prevent the same recurrences. The objective-level algorithm also uses short
kernel vectors; their ellipsoid counterparts
$B(f_h\vec e_i-f_i\vec e_h)$, with $f_h\ne0$, have lengths depending on $B$.
Thus the bounds in Section~\ref{sec:window} cannot simply be obtained by
replacing $\vec c$ with $\vec f$.

\paragraph{Diagonal quadratic forms retain the budget algorithm.}
Suppose $B^\top B=\operatorname{diag}(a_1,\ldots,a_k)$ and set
$b_i=(B^\top\vec t)_i$. Then
\[
  \norm{B\vec z-\vec t}^2
  =\norm{\vec t}^2+\sum_{i=1}^k q_i(z_i),
  \qquad q_i(m)=a_i m^2-2b_i m .
\]
After reflecting coordinates if necessary, assume $f_i\ge0$. For each $i$,
choose an integer minimizer $z_i^0$ of $q_i$, taking the larger minimizer in
case of a tie. Any $z_i<z_i^0$ is then dominated by $z_i^0$, so write
$z_i=z_i^0+x_i$ with $x_i\in\Z_{\ge0}$. If
$D:=\beta-\norm{B\vec z^0-\vec t}^2<0$, the instance is infeasible; otherwise
it becomes, up to the constant $\vec f^\top\vec z^0$,
\[
  \max\Bigl\{\sum_i f_i x_i:\ \sum_i g_i(x_i)\le D,\ x_i\in\Z_{\ge0}\Bigr\},
  \qquad
  g_i(x)=q_i(z_i^0+x)-q_i(z_i^0).
\]
Each $g_i$ is nonnegative, integer-valued, convex, and nondecreasing, and
$g_i(x)\le D$ restricts $x$ to $O(1+\sqrt\beta)$ values. Thus this is exactly
Hochbaum's integer nonlinear-knapsack setting~\cite[Section~1]{hochbaum1995},
so their pseudo-polynomial knapsack algorithm applies directly.

\paragraph{Unimodular ellipsoids inherit the exact ball algorithms.}
Suppose $d=k$ and $B$ is unimodular, meaning $\det B=\pm1$.
Then $B^{-1}$ is integral, so $\vec x=B\vec z-\vec t$ is a bijection of
$\Z^k$, with inverse $\vec z=B^{-1}(\vec x+\vec t)$. In these coordinates,
\[
  \norm{\vec x}^2\le\beta,
  \qquad
  \vec f^\top\vec z=(B^{-\top}\vec f)^\top\vec x
     +\vec f^\top B^{-1}\vec t.
\]
Thus we obtain \BallIP{} with objective $\vec c=B^{-\top}\vec f$ and squared
radius $\beta$. $\vec f^\top B^{-1}\vec t$ is a constant, hence it does not affect the optimal solution.
For unimodular $B$, all exact ball algorithms apply. Their bounds use the transformed coefficients, which may be much
larger than those of $\vec f$.

\paragraph{Ellipsoid feasibility is strongly NP-hard.}
Without the objective, \eqref{eq:ellip} asks whether
$$\operatorname{dist}(\vec t,B\Z^k)^2\le\beta$$ precisely the Euclidean CVP
decision problem. Its NP-hardness already holds for polynomially bounded
integer data~\cite{BGS17}. Therefore, unless $\mathrm{P}=\mathrm{NP}$, the ball's
pseudo-polynomial guarantees cannot extend to arbitrary ellipsoids, even
with polynomially bounded $\beta$ and a unit-coordinate objective.

\section{Conclusion}\label{sec:open}\label{sec:discussion}

We have shown that for every fixed integer $p\ge2$, the decision version of
$\BallIPp{p}$ is NP-complete. For the Euclidean case, the optimization problem
also admits complementary pseudo-polynomial algorithms and a close connection
with Euclidean CVP. We conclude with the main open question.

\textbf{Can the special ball geometry beat general convex integer programming?}
General integer-programming methods already give randomized $(\log n)^{O(n)}$ dependence
on the dimension, up to polynomial factors~\cite{reisrothvoss}; for the
quasiconvex-polynomial formulation, a deterministic
$2^{O(n\log n)}\operatorname{poly}(L)$ bound is also available
\cite{hk2010}.  Section~\ref{sec:boundary} shows that explicitly listing all points of a boundary cap requires radius dependent work.  By contrast, Theorem~\ref{thm:covscan} gives
an exact
\[
  2^{O(n)}\left(1+\frac{\norm{\vec c}\mu^2}{r}\right)
  \operatorname{poly}(L)
\]
algorithm whenever $r\ge\mu$, and therefore a
$2^{O(n)}\operatorname{poly}(L)$ algorithm whenever
$\norm{\vec c}\mu^2/r\le 2^{O(n)}$.  Does \BallIP{} admit an exact
$2^{O(n)}\operatorname{poly}(L)$ algorithm for all instances?  The reduction
of Section~\ref{sec:cvp-to-ballip} rules out
$2^{o(n)}\operatorname{poly}(L)$ time under ETH.  More precisely, it maps a
rank-$k$ Euclidean CVP instance in dimension $d$ to a \BallIP{} instance
of dimension $d+k\le2d$.  Thus any \BallIP{} running-time bound $T(n)$
yields, up to polynomial factors, a Euclidean-CVP bound $T(2d)$; improving the exponential dependence for \BallIP{} would therefore improve the
induced CVP bound. Abboud and Kumar~\cite[Theorem~1.3]{AK25} obtain a sub-$2^n$
algorithm for a restricted binary-coefficient CVP problem; our reduction
concerns unrestricted CVP. For a full-rank input, the reduction maps
dimension $d$ to dimension $2d$, so a \BallIP{} bound $a^n$ induces a CVP bound
$(a^2)^d$. Beating $2^d$ through this reduction would therefore require
$a<\sqrt2$, not merely $a<2$.

\bibliographystyle{plainnat}
\bibliography{references}

\end{document}